\newenvironment{equations}{\equation\aligned}{\endaligned\endequation}
\newtheorem{theorem}{Theorem}
\newtheorem{lemma}[theorem]{Lemma} 
\newtheorem{proposition}[theorem]{Proposition}
\newtheorem{definition}[theorem]{Definition}
\DeclareMathOperator*{\argmin}{\arg\!\min}
\newcommand{\ceil}[1]{\left\lceil {#1} \right\rceil}
\newcommand{\floor}[1]{\left\lfloor {#1} \right\rfloor}
\newcommand{\inn}[2]{\left\langle {#1}, {#2} \right\rangle}
\def\btheta{\bm{\uptheta}}
\def\ind{\mathbbm{1}}
\def\one{\mathbf 1}
\def\d{\mathrm{d}}
\def\e{\textup{e}}
\def\E{\mathsf{E}}
\def\P{\mathsf{P}}
\def\R{\mathbb{R}}
\def\dnn{\textsc{dnn}}
\def\clip{\mathsf{clip}}
\def\greekvectors#1{%
 \@for\next:=#1\do{%
    \def\X##1;{\expandafter\def\csname b##1\endcsname{\bm{\csname##1\endcsname}}}
    \expandafter\X\next;}
 \@for\next:=#1\do{%
    \def\X##1;{\expandafter\def\csname h##1\endcsname{\widehat{\csname##1\endcsname}}}
    \expandafter\X\next;}
 \@for\next:=#1\do{%
    \def\X##1;{\expandafter\def\csname c##1\endcsname{\check{\csname##1\endcsname}}}
    \expandafter\X\next;}
 \@for\next:=#1\do{%
    \def\X##1;{\expandafter\def\csname hb##1\endcsname{\widehat{\bm{\csname##1\endcsname}}}}
    \expandafter\X\next;}
}
    \def\command@factory#1{\expandafter\def\csname #1\endcsname{\mathbf{#1}} }
    \def\command@factory#1{\expandafter\def\csname t#1\endcsname{\widetilde{#1}} }
    \def\command@factory#1{\expandafter\def\csname tb#1\endcsname{\tilde{\mathbf{#1}}} }
    \def\command@factory#1{\expandafter\def\csname h#1\endcsname{\widehat{#1}} }
    \def\command@factory#1{\expandafter\def\csname hb#1\endcsname{\widehat{\mathbf{#1}}} }
    \def\command@factory#1{\expandafter\def\csname b#1\endcsname{\mathbbm{#1}} }
    \def\command@factory#1{\expandafter\def\csname c#1\endcsname{\mathcal{#1}} }
    \def\command@factory#1{\expandafter\def\csname f#1\endcsname{\mathfrak{#1}} }
    \def\command@factory#1{\expandafter\def\csname s#1\endcsname{\mathsf{#1}} }
\newcommand{\captionfonts}{\normalsize}
\long\def\@makecaption#1#2{%
  \vskip\abovecaptionskip
  \sbox\@tempboxa{{\captionfonts #1: #2}}%
  \ifdim \wd\@tempboxa >\hsize
    {\captionfonts #1: #2\par}
  \else
    \hbox to\hsize{\hfil\box\@tempboxa\hfil}%
  \fi
  \vskip\belowcaptionskip}
\begin{document}
\hspace{13.9cm}1

\ \vspace{20mm}\\

\begin{center}
{\LARGE Nonconvex Sparse Regularization for Deep Neural Networks and its Optimality}
\end{center}
\ \\
{\bf Ilsang Ohn$^{\displaystyle 1}$}\\
{$^{\displaystyle 1}$Department of Applied and Computational Mathematics and Statistics, University of Notre Dame, Notre Dame, IN 46556, USA}
\vspace{2mm}
\\
{\bf  Yongdai Kim$^{\displaystyle 2}$}\\
{$^{\displaystyle 2}$Department of Statistics, Seoul National University, Seoul 08826, Republic of Korea}\\

%\ \\[-2mm]

\thispagestyle{empty}
\markboth{}{NC instructions}
\ \vspace{-0mm}\\
%
%Abstract
\begin{center} {\bf Abstract}  \end{center}
Recent theoretical studies proved that deep neural network (DNN) estimators obtained by minimizing empirical risk with a certain sparsity constraint can attain optimal convergence rates for regression and classification problems. However, the sparsity constraint requires to know certain properties of the true model, which are not available in practice. Moreover, computation is difficult due to the discrete nature of the sparsity constraint. In this paper, we propose a novel penalized estimation method for sparse DNNs, which resolves the aforementioned problems existing in the sparsity constraint. We establish an oracle inequality for the excess risk of the proposed sparse-penalized DNN estimator and derive convergence rates for several learning tasks. In particular, we prove that the sparse-penalized estimator can adaptively attain minimax convergence rates for various nonparametric regression problems. For computation, we develop an efficient gradient-based optimization algorithm that guarantees the monotonic reduction of the objective function.

\vspace{2mm}
{\bf Keywords:} Deep neural network, Adaptiveness, Penalization, Minimax optimality, Sparsity

\section{Introduction}

Sparse learning of deep neural networks (DNN) has received much attention in artificial intelligence and statistics. In artificial intelligence, there are a lot of evidences \citep{han2015learning, frankle2018lottery, louizos2018learning}  to support that sparse DNN can reduce the complexity of a leaned DNN significantly (in terms of the number of parameters as well as the numbers of hidden layers and hidden nodes) without hampering prediction accuracy much. By doing so, we can reduce memory and energy consumption at the prediction phase. 

In statistics, recent studies about DNNs for nonparametric regression and classification \citep{schmidt2020nonparametric, imaizumi2018deep, suzuki2018adaptivity, bauer2019deep, kim2018fast} proved that a DNN estimator minimizing an empirical risk with a certain sparsity constraint achieves the minimax optimality for a wide class of functions including smooth functions, piecewise smooth functions and smooth decision boundaries. However, there are still two unanswered questions. The first question is how to choose a suitable level of sparsity, which depends on the unknown smoothness and/or the unknown intrinsic dimensionality of the true function. The second question is computation. Learning a deep architecture with a given sparsity constraint is computationally intractable since we need to explore a large number of possible configurations of sparsity pattern in the network parameter.

More recently, \citet{kohler2019rate} showed that the empirical risk minimizer over fully connected (i.e., non-sparse) DNNs can have minimax optimality also. Although removing the sparse constraint circumvents the related computation issue, their result is still nonadaptive because the appropriate number of hidden nodes should depend on the unknown smoothness of the true regression function.

In this paper, we propose a novel learning method of sparse DNNs for nonparametric regression and classification, which answers the aforementioned two questions in the sparsity-constrained empirical risk minimization (ERM) method. The proposed learning algorithm is to learn a DNN by minimizing the penalized empirical risk, which is the sum of the empirical risk and the clipped $L_1$ penalty \citep{zhang2010analysis}. By choosing the position of the clipping in the clipped $L_1$ penalty carefully, we establish an oracle inequality for the excess risk of the proposed sparse DNN estimator and derive convergence rates for several learning tasks. In particular, it will be shown that the proposed DNN estimator can \textit{adaptively} attain minimax convergence rates for various nonparametric regression problems.   

Although nonconvex penalties such as the clipped $L_1$ penalty are popular for high-dimensional linear regressions \citep{fan2001variable, zhang2010nearly}, they are not popularly used for DNN. Instead, $L_1$ norm-based penalties such as Lasso and Group Lasso are popular \citep{liu2015sparse, wen2016learning}. This would be partly because of the convexity of the $L_1$  penalty.  For computation with the clipped $L_1$ penalty, we develop an mini-batch optimization algorithm by combining  the proximal gradient descent algorithm \citep{parikh2014proximal} and the concave-convex procedure (CCCP) \citep{yuille2003concave}. The CCCP is a procedure to replace the clipped $L_1$ penalty by its tight convex upper bound to make the optimization problem be $L_1$ penalized, and the proximal gradient descent algorithm is a mini-batch optimization algorithm for $L_1$ penalized optimization problems. 
 
\subsection{Notation}

We denote by $\ind(\cdot)$ the indicator function. Let $\R$ be the set of real numbers and $\bN$ be the set of natural numbers. Let $\bN_0:=\bN\cup\{0\}.$ Let $[m]:=\{1,2,\dots,m\}$ for $m\in\bN$. For two real numbers $a$ and $b$, we write $a\vee b:=\max\{a,b\}$ and $a\wedge b:=\min\{a,b\}$. For a real valued vector $\x\equiv(x_1,\dots, x_d)\in\R^d$, we let $\norm{\x}_0:=\sum_{j=1}^d\mathbbm{1}(x_j\neq 0)$, $\norm{\x}_p:=\del{\sum_{j=1}^d|x_j|^p}^{1/p}$ for $p\in[1,\infty)$ and $\norm{\x}_\infty:=\max_{1\le j\le d}|x_j|$. For a real-valued function $f:\cX\mapsto \R$, we let $\norm{f}_{\infty, \cX}:= \sup_{\x\in\cX}|f(\x)|.$ If the domain of the function $f$ is clear in the context, we omit the subscript $\cX$ to write  $\norm{f}_{\infty}:= \norm{f}_{\infty, \cX}$. For $p\in[1,\infty)$ and a distribution $\sQ$ on $\cX$, let $\|f\|_{p,Q}:=\del{\int|f(\x)|^p\d\sQ(\x)}^{1/p}$. For two positive sequences $\{a_n\}_{n\in \mathbb{N}}$ and $\{b_n\}_{n\in \mathbb{N}}$, we write $a_n\lesssim b_n$ or $b_n\gtrsim a_n$ if there exists a positive constant $C>0$ such that $a_n\le Cb_n$ for any $n\in \mathbb{N}$. We also write $a_n\asymp b_n$ if $a_n\lesssim b_n$  and  $a_n\gtrsim b_n$. %If needed, the dependence of the constant $C$ on other parameters, say $u,\gamma$ will be emphasized by subscripts as $a_n\lesssim_{u,\gamma}b_n$ $a_n\asymp_{u,\gamma}b_n$.

\subsection{Deep neural networks}

A DNN with $L\in\bN$ layers, $N_l\in\bN$ many nodes at the $l$-th hidden layer for $l=[L]$, input of dimension $N_0$, output of dimension $N_{L+1}$ and nonlinear activation function $\rho:\R\mapsto\R$ is expressed as
    \begin{equation}
    \label{eq:nn}
        f(\x)=A_{L+1}\circ\rho_L\circ A_{L}\circ\cdots \circ\rho_1\circ A_1(\x),
    \end{equation}
where $A_l:\R^{N_{l-1}}\mapsto\R^{N_l}$ is an affine linear map defined by $A_l(\x)=\W_l\x+\b_l$ for given  $N_l\times N_{l-1}$ dimensional weight matrix $\W_l$ and $N_l$ dimensional bias vector $\b_l$, and $\rho_l:\R^{N_l}\mapsto\R^{N_{l}}$ is an element-wise nonlinear activation map defined as $\rho_l(\z):=(\rho(z_1),\dots, \rho(z_{N_l}))^\top$. We let $\btheta(f)$ denote a parameter, which is a concatenation of  all the weight matrices and the bias vectors, of the DNN $f$. That is,  
    $$\btheta(f):=\del[0]{\text{vec}(\W_1)^\top,\b_1^\top,\dots, \text{vec}(\W_{L+1})^\top, \b_{L+1}
^\top}^\top,$$
where $\text{vec}(\W)$ transforms the matrix $\W$ into the corresponding vector by concatenating the column vectors. 
 
We let $\cF_{\rho, d,o}^\dnn$ be the class of DNNs which take $d$-dimensional input (i.e., $N_0=d$) to produce $o$-dimensional output (i.e., $N_{L+1}=o$) and use the activation function $\rho:\R\mapsto\R$. In this paper, we focus on real-valued DNNs, i.e., $o=1,$ but the results in this paper can be extended easily for the case of $o\ge 2.$ 

For a given DNN $f$, we let $\textsf{depth}(f)$ denote  the depth (i.e., the number of hidden layers) and $\textsf{width}(f)$ denote  the width (i.e., the maximum of the numbers of hidden nodes at each layer) of the DNN $f$.  Throughout this paper, we  consider a class of DNNs with some constraints on the architecture, parameter and output value of a DNN such that
    \begin{equations}
        \cF^\dnn_\rho(L, N, B, F):=\big\{f\in\cF_{\rho, d,1}^\dnn:
        & \textsf{depth}(f)\le L, \textsf{width}(f)\le N,\\
        &\|\btheta(f)\|_\infty\le B, \|f\|_\infty \le F\big\}
    \end{equations}    
for positive constants $L$, $N$, $B$ and $F$. We consider $C$-Lipschitz $\rho$ for some $C>0.$ That is,
there exists $C>0$ such that $|\rho(z_1)-\rho(z_2)|\le C |z_1-z_2|$ for any $z_1,z_2\in\R$.
The ReLU activation function $x\mapsto \max\{0,x\}$ and the sigmoid activation function $x\mapsto 1/(1+\e^{-x})$, which are the two most popularly used activation functions, are both $C$-Lipschitz. Various $C$-Lipschitz activation functions are listed in \cref{sec:activation_examples}.

\subsection{Empirical risk minimization algorithm with sparsity constraint and its nonadaptiveness}
\label{sec:nonadap}

Most studies about DNNs for nonparametric regression \citep{bauer2019deep, suzuki2018adaptivity, imaizumi2018deep,imaizumi2020advantage,schmidt2020nonparametric, tsuji2021estimation} consider the ERM method with a certain sparsity constraint which can be summarized as follows. Let $(\X_1,Y_1),\ldots,(\X_n,Y_n)$ be $n$ many input-output pairs which are assumed to be independent random vectors identically distributed according to $\P$ on $\cX\times \cY$, where $\cX$ is a compact subset of $\R^d$ and $\cY$ is a subset of $\R$. First, a class of sparsity constrained DNNs with sparsity level $S>0$ is defined as
    \begin{equations}
        \cF^\dnn_\rho(L, N, B, F, S):=\big\{f\in\cF^\dnn _\rho(L, N, B, F):
        \|\btheta(f)\|_0\le S\big\}.
    \end{equations}   
Then for a given loss $\ell:\cY\times \R \rightarrow \R_+,$
the sparsity-constrained ERM estimator is defined as
     \begin{equation}
     \label{eq:erm_enn}
    \hat{f}_n^{\textsc{erm}}\in\argmin_{f\in \cF^\dnn_\rho(L_n, N_n, B_n, F_n, S_n)} \frac{1}{n}\sum_{i=1}^n \ell(Y_i,f(\X_i))
    \end{equation}  
with suitably chosen architecture parameters $L_n, N_n, B_n,  F_n$ and sparsity $S_n.$ 

It has been proven that the estimator $\hat{f}_n^{\textsc{erm}}$ attains minimax optimality in various supervised learning tasks, but most results are nonadaptive \citep{bauer2019deep, suzuki2018adaptivity, imaizumi2018deep,imaizumi2020advantage,schmidt2020nonparametric, tsuji2021estimation, kim2018fast}.  
To be more specific, let $f_\ell^\star:=\argmin_{f\in \cF} \E\ell(Y, f(\X))$, where $\cF$ is a set of all real-valued measurable function on $\cX$. Define the excess risk of a function $f$ as
    \begin{equation*}
        \cE_\P(f):=\E\ell(Y,f(\X))-\E\ell(Y,f_\ell^\star(\X)).
    \end{equation*}
If $\ell$ is the square loss, the activation function is the ReLU and $f_\ell^\star$ belongs to the class of H\"{o}lder functions of smootheness $\alpha>0$ with radius $R$ (see \labelcref{eq:def_holder} in Section \ref{sec:theory} for the definition of H\"{o}lder functions), \citet{schmidt2020nonparametric} proves that
the convergence rate of the excess risk $\cE_\P( \hat{f}_n^{\textsc{erm}})$
is $O(n^{-\frac{2\alpha}{2\alpha+d}} \log^3 n),$ which is is minimax optimal up to a logarithmic factor,
provided that $L_n\lesssim \log n$, $N_n\lesssim n^{\nu_1}, B_n\lesssim n^{\nu_2}$ and $S_n\asymp n^{\frac{d}{2\alpha+d}}\log n$ for some positive constants $\nu_1$ and $\nu_2.$ That is, the sparsity level $S_n$ for attaining the minimax optimality depends on the smoothness $\alpha$ of the true function $f_\ell^\star$ which is unknown. This nonadaptiveness still exists for classification. For details, see \cite{kim2018fast}.

\subsection{Outline}

The rest of the paper is organized as follows. In \cref{sec:dnn}, we propose a sparse penalized learning method for DNNs. In \cref{sec:theory}, we provide the oracle inequalities for the proposed sparse DNN estimator. Based on these oracle inequalities, we derive the convergence rates of our estimator for several supervised learning problems. 
In \cref{sec:compute}, we develop a computational algorithm.
In \cref{sec:numerical}, we conduct numerical study to assess the finite-sample performance of our estimator. Concluding remarks follow in \cref{sec:conclusion}, and the proofs are gathered in \cref{sec:sparse_proofs}. Approximation properties of DNNs with various activation functions are provided in \cref{sec:activation}.

\section{Learning sparse deep neural networks with the clipped $L_1$ penalty}
\label{sec:dnn}

In this paper, we consider the penalized empirical risk minimizer over DNNs, which is defined as
    \begin{equation}
    \label{eq:estimator}
    \hat{f}_n\in\argmin_{f\in \cF_n^\dnn}  
    \sbr{\frac{1}{n} \sum_{i=1}^n  \ell\left( Y_i, f(\X_i)\right)+J_n(f)},
    \end{equation}
where $\cF_n^\dnn$ is a certain class of DNNs and $J_n(f)$ a sparse penalty function. 
We call $\hat{f}_n$ the \textit{sparse-penalized DNN estimator}.
For the sparse penalty $J_n(f),$
we propose to use the clipped $L_1$ penalty given by
    \begin{equation}
        J_n(f):=J_{\lambda_n, \tau_n}(f):=\lambda_n\norm{\btheta(f)}_{\clip, \tau_n},
    \end{equation}
for tuning parameters $\lambda_n>0$ and $\tau_n>0,$
where $\|\cdot\|_{\clip, \tau}$ denotes the \textit{clipped $L_1$ norm} with a clipping threshold $\tau>0$ \citep{zhang2010analysis}  defined as
    \begin{equation}
        \|\btheta\|_{\clip,\tau}:=\sum_{j=1}^{p}\left( \frac{|\theta_j|}{\tau}\wedge 1\right)
    \end{equation}
for a $p$-dimensional vector $\btheta\equiv(\theta_j)_{j\in[p]}.$
    
The clipped $L_1$  norm  can be viewed as a continuous relaxation of the $L_0$ norm $\|\btheta\|_0$. \cref{fig:pen} compares the $L_0$ and the clipped $L_1$ norms. The continuity of the clipped $L_1$ norm makes the optimization \labelcref{eq:estimator} much easier than that with the $L_0$ norm, which will be discussed in \cref{sec:compute}.

The clipped $L_1$ norm has been used for sparse high dimensional linear regression by  \cite{zhang2010analysis} which yields an estimator having the oracle property. The main results of this paper is that  with suitable choices for $\lambda_n$ and $\tau_n$, which do depend on neither training data nor the true distribution, the sparse-penalized DNN estimator \labelcref{eq:estimator} with the clipped $L_1$ penalty can adaptively attain minimax optimality.

\begin{figure}
    \centering
    \includegraphics[scale=0.6]{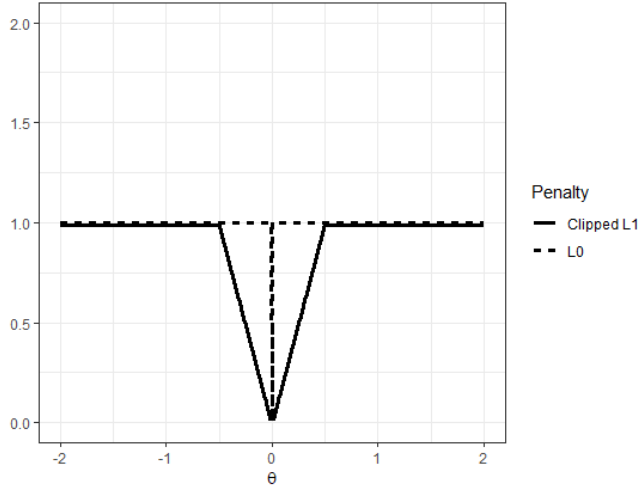}
    \caption{The clipped $L_1$ and $L_0$ penalties}
    \label{fig:pen}
\end{figure}

\section{Main results}
\label{sec:theory}

In this section, we provide theoretical justifications of the sparse-penalized DNN estimator (\ref{eq:estimator})  in both regression and binary classification tasks. We prove that minimax optimal convergence rates of the excess risk can be obtained adaptively for various nonparametric regression and classification tasks.

\subsection{Nonparametric regression}
\label{sec:nonpar_reg}

We first consider a nonparametric regression task, where the response $Y\in\R$ and input $\X\in [0,1]^d$ are generated from the model
    \begin{equation}
    \label{eq:model_reg}
        Y=f^\star(\X)+\epsilon, \quad \X\sim \P_\X,
    \end{equation}
where  $f^\star:[0,1]^d\mapsto \R$ is the unknown true regression function, $\P_\X$ is a distribution on $[0,1]^d$ and $\epsilon$ is an  error variable independent to the input variable $\X$. 
For technical simplicity, we focus on the sub-Gaussian error such that
    \begin{equation}
    \label{eq:subgauss}
        \E(\e^{t\epsilon})\le \e^{t^2\sigma^2/2}
    \end{equation}
for any $t\in\bR$ for some $\sigma>0$. We denote by $\cP_{\sigma, F^\star}$ the set of distributions $(\X,Y)$ satisfying 
the model \labelcref {eq:model_reg}:
    \begin{equation*}
        \cP_{\sigma, F^\star}:=\cbr{\mbox{Model \labelcref{eq:model_reg}}:
        \E(\e^{t\epsilon})\le\e^{t^2\sigma^2/2},\forall t\in\bR, \|f^\star\|_\infty\le F^\star}.
    \end{equation*}
The problem is to estimate the unknown true regression function $f^\star$ based on given training data $\{(\X_1,Y_i)\}_{i\in[n]}\sim \P^n$ where $\P\in\cP_{a,F^\star}$. We evaluate the performance of an estimator $\hat{f}$ by the expected $L_2(\P_\X)$ error 
    \begin{equation*}
        \E\sbr{\|\hat{f}-f^\star\|^2_{2, \P_\X}},
    \end{equation*}
where the expectation is taken over the training data and 
    \begin{equation*}
        \|\hat{f}-f^\star\|^2_{2,\P_\X}:=\int|\hat{f}(\x)-f^\star(\x)|^2\d\P_\X(\x).
    \end{equation*}

The following theorem provides an oracle inequality for the expected $L_2(\P_\X)$ error of the sparse-penalized DNN estimator.

\begin{theorem}
\label{thm:oracle_reg}
Assume that the true generative model $\P$ is in $\cP_{\sigma, F^\star}$. Let $F>0$ and let $\{L_n\}_{n\in\bN}$, $\{N_n\}_{n\in\bN}$ and $\{B_n\}_{n\in\bN}$ be positive sequences such that  $L_n\lesssim \log n$, $N_n\lesssim n^{\nu_1}$, $1\le B_n\lesssim n^{\nu_2}$ for some $\nu_1,\nu_2>0$. Then the sparse-penalized DNN estimator  defined by
    \begin{equation}
    \label{eq:estimator_reg}
        \hat{f}_n\in\argmin_{f\in\cF^\dnn_n}\sbr{\frac{1}{n}\sum_{i=1}^n(Y_i-f(\X_i))^2+\lambda_n\norm{\btheta(f)}_{\clip, \tau_n}},
    \end{equation}
with $\cF^\dnn_n=\cF^\dnn_\rho(L_n, N_n, B_n, F)$, $\lambda_n\asymp \log^5n/n$ and
$-\log \tau_n \ge A\log^{2}n$ for sufficiently large $A>0$,
%$\tau_n\lesssim (\log n)^{-1}n^{-(1+b)\log n-1}$ 
satisfies
    \begin{equation}
    \label{eq:oracle_reg}
        \E\sbr{\norm[0]{\hat{f}_n-f^\star}_{2, \P_\X}^2}\le
        2\inf_{f\in\cF^\dnn_n}\cbr{ \norm{f-f^\star}_{2, \P_\X}^2+\lambda_n\norm{\btheta(f)}_{\clip, \tau_n}} \vee\frac{C_{\sigma,F^\star}\log^2n}{n}
    \end{equation}
for some constant $C_{\sigma,F^\star}>0$ depending only on $\sigma$ and $F^\star$, where the expectation is taken over the training data.
\end{theorem}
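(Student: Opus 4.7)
The plan is to follow the standard oracle-inequality template for penalized ERM, with two nonstandard ingredients tailored to the clipped $L_1$ penalty: a thresholding device that converts it into an effective $L_0$ constraint, and a peeling argument over the induced sparsity level. The starting point is the basic inequality implied by the definition of $\hat f_n$ in~\labelcref{eq:estimator_reg}: subtracting $n^{-1}\sum_i(Y_i-f^\star(\X_i))^2$ from both sides and using $Y_i=f^\star(\X_i)+\epsilon_i$ gives, for every $f\in\cF_n^{\dnn}$,
\[
\norm{\hat f_n-f^\star}_n^2+\lambda_n\norm{\btheta(\hat f_n)}_{\clip,\tau_n}\;\le\;\norm{f-f^\star}_n^2+\lambda_n\norm{\btheta(f)}_{\clip,\tau_n}+\frac{2}{n}\sum_{i=1}^n\epsilon_i\bigl(\hat f_n-f\bigr)(\X_i),
\]
where $\norm{g}_n^2:=n^{-1}\sum_i g(\X_i)^2$. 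The task then reduces to (a) converting empirical norms into population norms and (b) absorbing the noise cross-term, each only into a fraction of $\norm{\hat f_n-f^\star}_{2,\P_\X}^2$ plus constant multiples of $\lambda_n$ times the two penalties and a $\log^2n/n$ remainder.

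The second step is a thresholding lemma: for $f\in\cF_n^{\dnn}$, let $\tilde f$ be the DNN obtained by zeroing every parameter $\theta_j$ with $|\theta_j|<\tau_n$. Since $\#\{j:|\theta_j|\ge\tau_n\}\le\norm{\btheta(f)}_{\clip,\tau_n}$, one has $\norm{\btheta(\tilde f)}_0\le\lceil\norm{\btheta(f)}_{\clip,\tau_n}\rceil$. A standard Lipschitz-in-parameter estimate for feed-forward networks gives $\norm{\tilde f-f}_\infty\le P_n(CN_nB_n)^{L_n}\tau_n$, where $P_n$ is the total parameter count; under $L_n\lesssim\log n$, $N_n\lesssim n^{\nu_1}$, $B_n\lesssim n^{\nu_2}$, the prefactor is $\exp(O(\log^2 n))$, so the condition $-\log\tau_n\ge A\log^2 n$ for sufficiently large $A$ makes $\norm{\tilde f-f}_\infty\le n^{-c}$ for any prescribed $c>0$. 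Consequently, any competitor with clipped $L_1$ penalty at most $S$ is polynomially indistinguishable from a function in the $\lceil S\rceil$-sparse sub-class $\cF^\dnn_\rho(L_n,N_n,B_n,F,\lceil S\rceil)$.

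The third step is uniform empirical-process control via peeling. For each $S\in\bN$, the $\delta$-covering number of $\cF^\dnn_\rho(L_n,N_n,B_n,F,S)$ in $\norm{\cdot}_\infty$ has logarithm $\lesssim SL_n\log(N_nB_nL_n/\delta)\lesssim S\log^2 n\cdot\log(1/\delta)$. Truncating $|\epsilon_i|$ at $C\sigma\log n$ (valid outside a probability-$n^{-c}$ event by sub-Gaussianity) and combining Bernstein concentration with a chaining step yields, with high probability and uniformly over $f$ in the $S$-sparse class,
\[
\frac{2}{n}\sum_{i=1}^n\epsilon_i\bigl(\hat f_n-f\bigr)(\X_i)\le\tfrac14\norm{\hat f_n-f}_{2,\P_\X}^2+C_1\lambda_n S+C_1\frac{\log^2 n}{n},
\]
and analogously for $|\norm{\hat f_n-f^\star}_n^2-\norm{\hat f_n-f^\star}_{2,\P_\X}^2|$. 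A union bound over $S=1,\dots,P_n$ (costing a further $\log n$), together with the thresholding identification $S\leftrightarrow\norm{\btheta(\hat f_n)}_{\clip,\tau_n}$, lets me substitute these uniform bounds into the basic inequality; rearranging and infimizing over $f$ yields \labelcref{eq:oracle_reg}, and the conversion from a high-probability bound to an expected $L_2$ error exploits $\norm{\hat f_n-f^\star}_\infty\le 2F^\star$ to absorb the exceptional event.

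The hard part is executing Step~3 cleanly: balancing the entropy $SL_n\log(N_nB_n)\lesssim S\log^2 n$, the sub-Gaussian truncation at $\log n$, the peeling union bound, and a Dudley-type entropy integral forces the choice $\lambda_n\asymp\log^5 n/n$, and tracking these five logarithmic factors without losing sharpness requires care. A secondary nuisance is that $(Y-f(\X))^2$ is unbounded because of the noise, so one must first truncate $|\epsilon_i|$ and then handle the exceptional event separately when passing from the high-probability inequality to the expected-error bound claimed in the theorem.
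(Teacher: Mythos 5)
Your proposal is correct and follows essentially the same route as the paper's proof: the hard-thresholding device that converts the clipped-$L_1$ constraint into an $L_0$-sparse class (the paper's \cref{lem:entropy_cl1}, resting on the parameter-Lipschitz bound of \cref{lem:lip} and the condition $-\log\tau_n\ge A\log^2 n$), a covering-number/peeling argument for the empirical process, truncation to handle the unbounded sub-Gaussian noise, and the basic inequality for the approximation term. The only differences are cosmetic — the paper peels over the penalty value $J_{\lambda_n,\tau_n}(f)$ in dyadic shells and invokes a ready-made ratio-type uniform deviation inequality for the loss differences (\cref{lem:emp}), whereas you peel over the sparsity level and control the noise cross-term directly — and these lead to the same accounting of logarithmic factors and the same choice $\lambda_n\asymp\log^5 n/n$.
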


The following theorem, which is a corollary of \cref{thm:oracle_reg}, provides a useful tool to derive convergence rates of the sparse-penalized DNN estimator for various classes of functions to which the true regression function $f^\star$ belongs. 

\begin{theorem}
\label{cor:conv_reg}
Let $\{L_n\}_{n\in\bN}$, $\{N_n\}_{n\in\bN}$, $\{B_n\}_{n\in\bN}$ and $F>0$ be as in \cref{thm:oracle_reg}. Moreover, let $F^\star>0$ and let $\cF^\star$ be a set of some real-valued functions on $[0,1]^d$. Assume that there are  constants $\kappa>0$, $r>0$, $\epsilon_0>0$ and  $C>0$ such that
    \begin{equation}
    \label{eq:approx_reg}
       \sup_{f^\diamond\in\cF^\star:\|f^\diamond\|_\infty\le F^\star}\inf_{f\in\cF^\dnn_\rho(L_n, N_n, B_n, F,S_{n,\epsilon})}\|f-f^\diamond\|_{2,\P_\X}\le \epsilon
    \end{equation}
with $S_{n,\epsilon}:=C\epsilon^{-\kappa}\log^rn$ for any $\epsilon\in(0,\epsilon_0)$ and $n\in\bN.$ Then the sparse-penalized DNN estimator defined by \labelcref{eq:estimator_reg} with $\cF^\dnn_n=\cF^\dnn_\rho(L_n, N_n, B_n, F)$ satisfies
    \begin{equation}
        \sup_{\P\in\cP_{\sigma,F^\star}:f^\star\in\cF^\star}\E\sbr{\norm[0]{\hat{f}_n-f^\star}_{2, \P_\X}^2}\lesssim n^{-\frac{2}{\kappa+2}}\log^{5+r}n.
    \end{equation}
\end{theorem}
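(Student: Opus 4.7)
The plan is to directly apply the oracle inequality of \cref{thm:oracle_reg} and then balance the approximation error against the clipped $L_1$ penalty by invoking the hypothesis \labelcref{eq:approx_reg}; no new probabilistic input is required beyond \cref{thm:oracle_reg}.

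Fix any $\P\in\cP_{\sigma,F^\star}$ with $f^\star\in\cF^\star$. By the definition of $\cP_{\sigma,F^\star}$ one has $\|f^\star\|_\infty\le F^\star$, so the hypothesis \labelcref{eq:approx_reg} applies to $f^\star$: for every $\epsilon\in(0,\epsilon_0)$ there exists
\[
f_\epsilon\in\cF^\dnn_\rho(L_n,N_n,B_n,F,S_{n,\epsilon})\subset\cF^\dnn_n
\]
with $\|f_\epsilon-f^\star\|_{2,\P_\X}\le\epsilon$. Since $(|t|/\tau)\wedge 1\le\ind(t\neq 0)$, the clipped $L_1$ norm is dominated by the $L_0$ norm, so
\[
\|\btheta(f_\epsilon)\|_{\clip,\tau_n}\le\|\btheta(f_\epsilon)\|_0\le S_{n,\epsilon}=C\epsilon^{-\kappa}\log^r n.
\]
Feeding this candidate into the oracle inequality of \cref{thm:oracle_reg} yields, for every $\epsilon\in(0,\epsilon_0)$,
\[
\E\norm[0]{\hat f_n-f^\star}_{2,\P_\X}^2\;\le\;2\bigl(\epsilon^2+C\lambda_n\epsilon^{-\kappa}\log^r n\bigr)\;\vee\;\frac{C_{\sigma,F^\star}\log^2 n}{n}.
\]

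The remaining step is to optimize in $\epsilon$. I would pick $\epsilon_n:=n^{-1/(\kappa+2)}$, which lies in $(0,\epsilon_0)$ for all sufficiently large $n$. With $\lambda_n\asymp\log^5 n/n$ this gives
\[
\epsilon_n^2=n^{-2/(\kappa+2)},\qquad\lambda_n\epsilon_n^{-\kappa}\log^r n\asymp n^{-2/(\kappa+2)}\log^{5+r}n,
\]
so the penalty term dominates the approximation term and the right-hand side is $\lesssim n^{-2/(\kappa+2)}\log^{5+r}n$. Because $2/(\kappa+2)\le 1$, the residual $\log^2 n/n$ is of strictly smaller order. For the finitely many $n$ with $\epsilon_n\ge\epsilon_0$, a trivial bound using $\|\hat f_n\|_\infty\le F$ and $\|f^\star\|_\infty\le F^\star$ is absorbed into the constant.

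The only real obstacle is bookkeeping: one must confirm that $f_\epsilon$ belongs to the unconstrained class $\cF^\dnn_n$ (immediate, since dropping the sparsity constraint only enlarges the class) and that all estimates are uniform in $f^\star$ so the supremum on the left-hand side is preserved (immediate, since \labelcref{eq:approx_reg} is itself stated as a supremum over $f^\diamond\in\cF^\star$ with $\|f^\diamond\|_\infty\le F^\star$). The substantive statistical work has already been done in \cref{thm:oracle_reg}; here one just trades approximation error against the computable proxy $\lambda_n\|\btheta(f)\|_{\clip,\tau_n}$ for $\lambda_n\|\btheta(f)\|_0$.
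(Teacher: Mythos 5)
Your proposal is correct and follows essentially the same route as the paper's proof: apply the oracle inequality of \cref{thm:oracle_reg}, bound the clipped $L_1$ norm by the $L_0$ norm to invoke the approximation hypothesis \labelcref{eq:approx_reg}, and balance by choosing $\epsilon_n=n^{-1/(\kappa+2)}$. The only cosmetic point is that \labelcref{eq:approx_reg} gives an infimum $\le\epsilon$ rather than an attained minimizer, so one should take $f_\epsilon$ with error $\le 2\epsilon$ (or argue via the infimum directly, as the paper does), which changes nothing.
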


If there exist positive constants $\nu_1$ and $\nu_2$ in \cref{thm:oracle_reg} with which the condition \labelcref{eq:approx_reg} holds for wide classes of $\cF^\star$, then the convergence rate of the sparse-penalized estimator can be adaptive to the choice of $\cF^\star.$ In the followings, we list up several examples where the sparse-penalized estimator is adaptively minimax optimal (up to a logarithmic factor). Before this, we consider the two types of activation functions given below because the constant $\nu_2$ differs for these two types of activation functions.

\begin{definition}
A  function $\rho:\R\to\R$ is {\it continuous piecewise linear} if it is continuous and there exist a finite number of break points $a_1\le a_2\le \cdots\le  a_K\in\R$ with $K\in\bN$ such that  $\rho'(a_k-)\neq \rho'(a_k+)$ for every $k\in[K]$ and $\rho(x)$ is linear on $(-\infty,a_1]$, $[a_1,a_2]$, $\dots$, $[a_{K-1}, a_K]$, $[a_K, \infty)$.
\end{definition}

\begin{definition}
A function $\rho:\R\to\R$ is {\it locally quadratic} if there exits an open interval $(a,b)\subset\R$ on which $\rho$ is three times continuously differentiable with bounded derivatives and there exists $t\in(a,b)$ such that $\rho'(t)\neq0$ and $\rho''(t)\neq0$. 
\end{definition}

Examples of continuous piecewise linear and locally quadratic activation functions are ReLU and sigmoid functions, respectively. Other activation functions are listed in \cref{sec:activation_examples}.

\paragraph{H\"older functions} The H\"older space of smoothness $\alpha>0$ with radius $R>0$ is defined as 
	\begin{equation}
	\label{eq:def_holder}
	\cH^{\alpha, R}(\cX):=\left\{f:\cX\mapsto\R:\|f\|_{\cH^{\alpha}(\cX)}\le R\right\},
	\end{equation}
where $\|f\|_{\cH^{\alpha}(\cX)}$ denotes the H\"older norm defined by
    \begin{align*}
    \|f\|_{\cH^{\alpha}(\cX)}
        :=&\sum_{\m\in\bN_0^d:\|\m\|_1\le \floor{\alpha}}\|\partial^{\m}f\|_{\infty}\\
        &+\sum_{\m\in\bN_0^d:\|\m\|_1= \floor{\alpha}}\sup_{\x_1,\x_2\in \cX, \x_1\neq\x_2 }\frac{|\partial^{\m}f(\x_1)-\partial^{\m}f(\x_2)|}{|\x_1-\x_2|^{\alpha- \floor{\alpha}}}.
    \end{align*}
Here, $\partial^{\m}f$ denotes the partial derivative of $f$ of order $\m$.

\citet{yarotsky2017error} and \citet{schmidt2020nonparametric} proved that for $\cH^{\alpha, R}([0,1]^d)$, the class of DNNs $\cF^\dnn_\rho(L_n, N_n, B_n, F, S)$ with the ReLU activation $\rho$ and 
    \begin{equation*}
        L_n\asymp \log n, \quad N_n\gtrsim n^{\frac{d}{2\alpha+d}}, \quad B_n=1, \quad F>F^\star
    \end{equation*}
satisfies the condition \labelcref{eq:approx_reg} with  $\kappa = d/\alpha$ and $r=1$ for all $\alpha>0.$ Hence, for $L_n\asymp \log n$, $N_n\asymp n$, $B_n=B\ge1$, $F>F^\star$ and the ReLU activation function $\rho$, \cref{cor:conv_reg} implies that the convergence rate of the sparse-penalized DNN estimator defined by \labelcref{eq:estimator_reg} with $\cF^\dnn_n=\cF^\dnn_\rho(L_n, N_n, B_n, F)$ is given by
    \begin{equation}
    \label{eq:rate_holder}
        n^{-\frac{2\alpha}{2\alpha+d}}\log^6n,
    \end{equation}
which is the minimax optimal (up to a logarithmic factor). That is, the sparse-penalized DNN estimator is minimax-optimal adaptively to the smootheness $\alpha.$

Also Theorem 1 of \citet{ohn2019smooth} (which is presented in \cref{thm:holder_approx} in \cref{sec:activation} for reader's convenience) shows that similar approximation results hold for piecewise linear activation function with $B_n\asymp1$ and locally quadratic activation functions   
with $B_n\asymp n^4.$ 

\paragraph{Composition structured functions}

The curse of dimensionality can be avoided by certain structural assumptions on the regression function. \citet{schmidt2020nonparametric} considered so-called composition structured regression functions which include a single-index model \citep{gaiffas2007optimal}, an additive model \citep{stone1985additive, buja1989linear} and a generalized additive model with an unknown link function \citep{horowitz2007rate} as special cases. This class is specified as follows. 
Let $q\in\bN$, $\mathbf{d}:=(d_1, \dots, d_{q+1})\in\bN^{q+1}$ with $d_1:=d$ and $d_{q+1}=1$, $\t:=( t†_1, \dots, t_{q+1})\in\prod_{j=1}^{q+1}[d_j]$, and $\balpha:=(\alpha_1, \dots, \alpha_q)\in\R^{q}$.
We denote by $\cG^{\textsc{comp}}(q, \balpha, \mathbf{d}, \t, R)$ a set of composition structured function given by 
    \begin{equations}
    \label{eq:compose}
        &\cG^{\textsc{comp}}(q, \balpha, \mathbf{d}, \t, R)\\
        &:=\big\{f=g_q\circ\cdots\circ g_1: g_{j}=(g_{j,k})_{k\in[d_{j+1}]}:[a_j,b_j]^{d_j}\mapsto[a_{j+1},b_{j+1}]^{d_{j+1}}, \\ 
        &\hspace{40mm} g_{j,k}\in \cH^{\alpha_j, R}([a_j,b_j]^{t_j})\mbox{ for some $|a_j|\vee|b_j|\le R$}\big\}.
    \end{equations}

Letting $\alpha^*_j:=\alpha_j\prod_{h=j+1}^q(\alpha_h\wedge 1)$ for each $j\in[q-1]$ and $\alpha_q^*:=\alpha_q$, \citet{schmidt2020nonparametric} showed that for the function class $\cG^{\textsc{comp}}(q, \balpha, \mathbf{d}, \t, R)$ in \labelcref{eq:compose}, the class of DNNs $\cF^\dnn_\rho(L_n, N_n, B_n, F, S)$ with the ReLU activation $\rho$ and
    \begin{equation*}
        L_n\asymp \log n, \quad N_n\gtrsim \max_{j\in [q]}n^{\frac{t_j}{2\alpha^*_j+t_j}}, \quad B_n=1, \quad  F>F^\star
    \end{equation*}
satisfies the condition \labelcref{eq:approx_reg}  with
    $$\kappa =\max_{j\in [q]}\frac{t_j}{\alpha^*_j}$$
and $r=1$.
Thus for $L_n\asymp \log n$, $N_n\asymp n$, $B_n=B\ge1$, $F>F^\star$ and the ReLU activation function $\rho$, the sparse-penalized DNN estimator defined by \labelcref{eq:estimator_reg} with $\cF^\dnn_n=\cF^\dnn_\rho(L_n, N_n, B_n, F)$ attains the rate
    \begin{equation}
    \label{eq:rate_compose}
        \max_{j\in [q]}n^{-\frac{2\alpha^*_j}{2\alpha^*_j+t_j}}\log ^6n,
    \end{equation}
which is minimax optimal up to a logarithmic factor. \cref{thm:composite_approx} in \cref{sec:activation} shows that a similar approximation result holds for the  piecewise linear activation functions with $B_n\asymp 1$ and hence the corresponding sparse-penalized DNN estimator is minimax optimal adaptively to $(\alpha_j:j\in[q])$.

For locally quadratic  activation functions, a situation is tricky. In \cref{sec:activation}, we succeeded in proving only that there exists $\nu_2>0$ satisfies \cref{cor:conv_reg} only when there exists $\xi>0$ such that $\min_{j\in[q]} \alpha_j >\xi.$ That is, the sparse-penalized DNN estimator is adaptively minimax optimal only for sufficiently smooth functions. However, we think that this minor incompleteness would be mainly due to technical limitations.

\paragraph{Piecewise smooth functions}

\citet{petersen2018optimal} and \citet{imaizumi2018deep} introduced a notion of piecewise smooth functions, which have a support divided into several pieces with smooth boundaries and are smooth only within each of the pieces. Let $M\in\bN$, $K\in\bN$, $\alpha>0$, $\beta>0$ and $R>0$.  Formally, the class of piecewise smooth functions is defined as
   \begin{equations}
   \label{eq:piece}
        \cG^{\textsc{piece}}(\alpha, \beta, M, K, R) :=\big\{f:&f(\x)=\sum_{m=1}^Mg_m(\x)\prod_{k\in[K]}\mathbbm{1}\del{x_{j_{m,k}}\ge h_{m,k}(\x_{-j_{m,k}})},\\ 
        & g_m\in \cH^{\alpha, R}([0,1]^d), h_{m,k}\in \cH^{\beta, R}([0,1]^{d-1}), j_{m,k}\in[d]
        \big\}.
    \end{equations}

\citet{petersen2018optimal} and \citet{imaizumi2018deep} showed that for the function class $\cG^{\textsc{piece}}(\alpha, \beta,M, K, R)$ in \labelcref{eq:piece}, the class of DNNs $\cF^\dnn_\rho(L_n, N_n, B_n, F, S)$ with the ReLU activation function and
    \begin{equation*}
        L_n\asymp \log n, \quad  
        N_n\gtrsim n^{\frac{d}{2\alpha+d}}\vee n^{\frac{d-1}{\beta+d-1}}, \quad  
        B_n\gtrsim  n^{\frac{\alpha}{2\alpha+d}}\vee n^{\frac{\beta}{2(\beta+d-1)}}, \quad 
        F>F^\star
    \end{equation*}
satisfies the condition \labelcref{eq:approx_reg} with
    \begin{equation*}
        \kappa = \frac{d}{\alpha}\vee\frac{2(d-1)}{\beta}
    \end{equation*}
and $r=1$, provided that the marginal distribution $\P_\X$ of the input variable admits a density $\frac{\d\P_\X}{\d\mu}$ with respect to the Lebesgue measure $\mu$ and $\sup_{\x\in[0,1]^d}\frac{\d\P_\X}{\d\mu}(\x)\le C$ for some $C>0$. Hence for $L_n\asymp \log n$, $N_n\asymp n$, $B_n\asymp n$, $F>F^\star$ and the ReLU activation function $\rho$, the sparse-penalized  DNN estimator defined by \labelcref{eq:estimator_reg} with $\cF^\dnn_n=\cF^\dnn_\rho(L_n, N_n, B_n, F)$ attains the rate
    \begin{equation}
    \label{eq:rate_piece}
        \cbr{n^{-\frac{2\alpha}{2\alpha+d}}\vee n^{-\frac{\beta}{\beta+d-1}}}\log^6n,
    \end{equation}
which is minimax optimal up to a logarithmic factor.

\cref{thm:piece_approx} in  \cref{sec:activation} shows that a similar DNN approximation result holds for piecewise linear activation functions with $B_n\asymp n$ and locally quadratic activation functions with $B_n\asymp n^4$. Hence  the sparse-penalized DNN estimator with the activation function being either piecewise linear or locally quadratic is also minimax optimal adaptively to $\alpha$ and $\beta$.

\paragraph{Besov and mixed smooth Besov functions} 
\citet{suzuki2018adaptivity} proved the minimax optimality of the ERM estimator with a certain sparsity constraint for the estimation of a regression function in the Besov space or the mixed smooth Besov space. Similarly to the other function spaces, we can prove that the sparse-penalized DNN estimator is minimax optimal adaptively for the Besov space or the mixed smooth Besov space using \cref{cor:conv_reg} along with Proposition 1 and Theorem 1 of \citet{suzuki2018adaptivity}, respectively. We omit the details due to the limitation of spaces.

\paragraph{Summary of the network architecture}

In \cref{tab:architecture}, we summarize the minimal values of the network architecture parameters $\nu_1$ and $\nu_2$ that attain the adaptive optimality according to the type of  activation function and the class of true regression functions. Note that any values of the architecture parameters larger than the corresponding minimal values in the table also lead to the adaptive optimality.  Thus, users can select $\nu_1$ and $\nu_2$  based on the prior information about the true regression function and the results in the table without resorting to a tuning procedure. The choice $\nu_1=1$ is allowed regardless of the true regression function and the choice of the activation function. Any $\nu_1$ larger than 1 can be used but additional computation is needed since more hidden nodes are used.  For $\nu_2$, we may need a very large value, in particular, when we use the locally quadratic activation function and the true regression function is of composition structured. But since the boundness restriction of the parameter does not affect its computational complexity and thus we recommend to use a sufficiently large $\nu_2.$

\begin{table}[]
\centering
\caption{Summary of the minimal values of the network architecture parameters $\nu_1$ and $\nu_2$  that attain the adaptive optimality 
according to the activation function and the class of true regression functions. Here, $\xi$ denotes the lower bound of the smoothness $(\alpha_j:j\in[q]).$}
\label{tab:architecture}
\begin{tabular}{llcc}
\hline
True regression function                        & Activation function & $\nu_1$ & $\nu_2$                         \\\hline
\multirow{2}{*}{H\"older smooth} & Piecewise linear    & 1       & 0                               \\\cline{2-4}
                                                & Locally quadratic   & 1       & 4                               \\\hline
\multirow{2}{*}{Composition structured}         & Piecewise linear    & 1       & 0                               \\\cline{2-4}
                                                & Locally quadratic   & 1       & $4+\frac{1}{2}\max\{0,\xi^{-q+1}-4\}$ \\\hline
\multirow{2}{*}{Piecewise smooth}               & Piecewise linear    & 1       & 1                               \\\cline{2-4}
                                                & Locally quadratic   & 1       & 4                              \\\hline
\end{tabular}
\end{table}

\subsection{Classification with strictly convex losses}

In this section, we consider a binary classification problem. The goal of classification is to find a real-valued function $f$ (called a \textit{decision function}) such that $f(\x)$ is a good prediction of the label $y\in\{-1,1\}$ for a new sample $(\x, y)$. In practice, the margin-based loss function, which evaluates the quality of the prediction by $f$ for a sample $(\x, y)$  based on its margin $yf(\x)$, is popularly used. Examples of the margin based loss  functions are the 0-1 loss $\ind(yf(\x)<0)$, hinge loss $(1-yf(\x))\vee0$,   exponential loss $\exp(-yf(\x))$ and logistic loss $\log(1+\exp(-yf(\x)))$. Here we focus on strictly convex losses  which include the exponential and the logistic losses. Note that the logistic loss is popularly used for learning a DNN classifier in practice under the name of cross-entropy.

We assume that the label $Y\in\{-1,1\}$ and input $\X\in[0,1]^d$ are generated from the model
    \begin{equation}
    \label{eq:model_reg_cls}
        Y|\X=\x\sim 2\mathsf{Bernoulli}(\eta(\x))-1,\quad \X\sim \P_\X,
    \end{equation}
where $\eta(\x)$ is called a conditional class probability and $\P_\X$ is a distribution on $[0,1]^d$. The aim is to find a real-valued function $f$ so that the \textit{excess risk} of $f$ given by:
    \begin{equation*}
        \cE_{\P}(f):=\E(\ell(Yf(\X)))-\E(\ell(Yf_\ell^\star(\X)))
    \end{equation*}
close to zero as possible,    
where $\ell$ is a given margin-based loss function, $f_\ell^\star=\argmin_{f\in\cF}\E(\ell(Yf(\X)))$ is the optimal decision function and $\cF$ is a set of all real-valued measurable functions on $[0,1]^d$. We assume that $\|f_\ell^\star\|_\infty\le F^\star$ for some $F^\star>0$. This assumption is satisfied if the conditional class probability $\eta(\x)$ satisfies $\inf_{\x\in[0,1]^d}\eta(\x)\wedge(1-\eta(\x))\ge\eta_0$ for some $\eta_0>0$, i.e., $\eta$ is bounded away from 0 and 1, for the exponential and logistic losses. This is because
$f_\ell^\star(\x)=\log( \eta(\x)/(1-\eta(\x))).$
We denote by $\cQ_{F^\star}$ the set of distributions satisfying the above assumption, that is,
    \begin{equation*}
        \cQ_{F^\star}=\cbr{\mbox{Model \labelcref{eq:model_reg_cls}}: \|f_\ell^\star\|_\infty\le F^\star}.
    \end{equation*}

The following theorem states the oracle inequality for the excess risk of the sparse-penalized DNN estimator based on  a strictly convex margin-based loss function.

\begin{theorem}
\label{thm:oracle_cls} 
Let $\ell$ be a strictly convex margin-based loss function with continuous first and second derivatives. Assume that the true generative model $\P$ is in $\cQ_{F^\star}$. Let $F>0$ and let $\{L_n\}_{n\in\bN}$, $\{N_n\}_{n\in\bN}$ and $\{B_n\}_{n\in\bN}$ be positive sequences such that  $L_n\lesssim \log n$, $N_n\lesssim n^{\nu_1}$, $1\le B_n\lesssim n^{\nu_2}$ for some $\nu_1,\nu_2>0$. Then the sparse-penalized DNN estimator defined by
    \begin{equation}
    \label{eq:estimator_cls}
        \hat{f}_n\in\argmin_{f\in\cF^\dnn_n}\sbr{\frac{1}{n}\sum_{i=1}^n\ell(Y_if(\X_i))+\lambda_n\norm{\btheta(f)}_{\clip, \tau_n}}
    \end{equation}
with  $\cF^\dnn_n=\cF^\dnn_\rho(L_n,N_n,B_n,F)$, $\lambda_n\asymp \log^3n/n$ and
$-\log \tau_n \ge A\log^{2}n$ for sufficiently large $A>0$,
%$\tau_n\lesssim (\log n)^{-1}n^{-(1+b)\log n-1}$ 
satisfies
    \begin{equation}
    \label{eq:oracle_cls}
        \E\sbr{\cE_{\P}(\hat{f}_n)}\le2 
        \inf_{f\in\cF^\dnn_n}\cbr{ \cE_{\P}(f)+\lambda_n\norm{\btheta(f)}_{\clip, \tau_n} } \vee\frac{C\log n}{n},
    \end{equation}
for some universal constant $C>0$, where the expectation is taken over the training data.
\end{theorem}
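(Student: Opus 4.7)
The proof will parallel that of \cref{thm:oracle_reg}, with the squared-loss identity replaced by a Bernstein-type equivalence between the excess risk and the squared $L_2(\P_\X)$ distance to $f_\ell^\star$. The plan is to (i) establish this equivalence using the strict convexity of $\ell$ on a compact margin range, (ii) use it to obtain both a variance bound for the empirical process and a curvature term absorbing the bias, and (iii) combine with the covering-number machinery for sparsely-parametrized DNNs developed for \cref{thm:oracle_reg}.

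The first step is the \emph{Bernstein condition}. Since $\|f\|_\infty\le F$ for $f\in\cF_n^\dnn$ and $\|f_\ell^\star\|_\infty\le F^\star$, the margins $yf(\x),yf_\ell^\star(\x)$ lie in $[-F^\vee,F^\vee]$ with $F^\vee:=F\vee F^\star$. The conditional risk $G_\x(t):=\eta(\x)\ell(t)+(1-\eta(\x))\ell(-t)$ is minimized at $f_\ell^\star(\x)$, and for the exponential and logistic losses the second derivative $G_\x''$ is continuous and bounded below by some $c_1>0$ uniformly on $[-F^\vee,F^\vee]\times[0,1]^d$. A second-order Taylor expansion at $f_\ell^\star(\x)$ then yields, for every $f\in\cF_n^\dnn$,
\[
c_1\,\|f-f_\ell^\star\|_{2,\P_\X}^2\;\le\;\cE_{\P}(f)\;\le\;c_2\,\|f-f_\ell^\star\|_{2,\P_\X}^2.
\]
Combined with the Lipschitzness of $\ell$ on $[-F^\vee,F^\vee]$, this gives both the uniform bound $|\ell(Yf(\X))-\ell(Yf_\ell^\star(\X))|\le C_\ell$ and the variance estimate $\var[\ell(Yf(\X))-\ell(Yf_\ell^\star(\X))]\lesssim \cE_{\P}(f)$.

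The second step is the empirical-process argument. From the basic inequality for the penalized minimizer, one has for every $f\in\cF_n^\dnn$,
\[
\cE_{\P}(\hat{f}_n)+\lambda_n\hat{s}_n\;\le\;\cE_{\P}(f)+\lambda_n s_f+\Delta_n(f,\hat{f}_n),
\]
where $s_f:=\|\btheta(f)\|_{\clip,\tau_n}$, $\hat{s}_n:=\|\btheta(\hat{f}_n)\|_{\clip,\tau_n}$, and $\Delta_n$ denotes the centered empirical process of $\ell(Y\cdot f(\X))-\ell(Y\cdot \hat{f}_n(\X))$. I would peel $\cF_n^\dnn$ by the clipped norm: any DNN with $s_f\le S$ has at most $S$ parameters exceeding $\tau_n$ in magnitude; zeroing the sub-threshold parameters perturbs $f$ in sup-norm by $\mathrm{poly}(n)\cdot\tau_n$, which is negligible under $-\log\tau_n\ge A\log^2 n$ for large $A$. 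Standard entropy estimates for $S$-sparse DNNs give $\log\cN(\epsilon,\cdot,\|\cdot\|_\infty)\lesssim S\log(n/\epsilon)$; combined with the Bernstein condition and Talagrand's inequality applied to the bounded class of excess losses, this yields, uniformly over $f\in\cF_n^\dnn$ with probability at least $1-n^{-c}$,
\[
\Delta_n(f,\hat{f}_n)\;\lesssim\;\sqrt{\tfrac{(s_f+\hat{s}_n+1)\log^3 n}{n}\bigl(\cE_{\P}(f)+\cE_{\P}(\hat{f}_n)\bigr)}+\tfrac{(s_f+\hat{s}_n+1)\log^3 n}{n}.
\]
AM-GM on the first term together with the choice $\lambda_n\asymp\log^3 n/n$ absorbs the $\hat{s}_n$ and $\cE_{\P}(\hat{f}_n)$ contributions into the left-hand side and yields \labelcref{eq:oracle_cls}, with the $C\log n/n$ residual arising on integration of the failure-probability tail.

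The main obstacle is handling the random and a priori unbounded effective sparsity $\hat{s}_n$: a union bound over integer sparsity levels is required, and the sub-threshold pruning estimate must remain valid uniformly in this union, which is precisely what the exponentially small $\tau_n$ buys. Given that the regression-case proof of \cref{thm:oracle_reg} already furnishes the peeling framework and the sparse-DNN covering bounds, the content specific to \cref{thm:oracle_cls} is the Bernstein condition of the first step, with the remaining ingredients being essentially identical substitutions (the classification exponent $\log^3 n$ rather than $\log^5 n$ reflecting the bounded, rather than sub-Gaussian, loss).
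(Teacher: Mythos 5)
Your proposal is correct and follows essentially the same route as the paper: a Bernstein-type variance condition from strict convexity of $\ell$ on a compact margin range (the paper cites Lemma 6.1 of \citealt{park2009convergence}; your Taylor expansion of the conditional risk is the same argument the paper uses later in the proof of \cref{cor:conv_cls}), a peeling over penalty levels combined with covering-number bounds for the clipped-$L_1$-constrained class obtained by hard-thresholding sub-$\tau_n$ parameters, and the basic inequality against a near-optimal $f_n^\circ$ for the approximation term. The only imprecision is the claim that pruning sub-threshold parameters perturbs $f$ by $\mathrm{poly}(n)\cdot\tau_n$: the parameter-to-function Lipschitz constant is $(L_n+1)((N_n+1)B_n)^{L_n+1}=\e^{O(\log^2 n)}$, which is super-polynomial and is exactly why the condition $-\log\tau_n\ge A\log^2 n$ (rather than $\tau_n\le n^{-c}$) is imposed, as you correctly invoke.
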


The following theorem, which is a corollary of \cref{thm:oracle_cls},
is an extension of \cref{cor:conv_reg} for strictly convex margin-based loss functions.

\begin{theorem}
\label{cor:conv_cls}
Let $\{L_n\}_{n\in\bN}$, $\{N_n\}_{n\in\bN}$ and $\{B_n\}_{n\in\bN}$ and $F>0$ be as in \cref{thm:oracle_cls}. Moreover, let $F^\star>0$ and let $\cF^\star$ be a set of some real-valued functions on $[0,1]^d$.  Assume that there are constants $\kappa>0$, $r>0$, $\epsilon_0>0$ and  $C>0$ such that
    \begin{equation}
    \label{eq:approx_cls}
       \sup_{f^\diamond\in\cF^\star:\|f^\diamond\|_\infty\le F^\star}\inf_{f\in\cF^\dnn_\rho(L_n, N_n, B_n, F,  S_{n,\epsilon})}\|f-f^\diamond\|_{2,\P_\X}\le \epsilon,
    \end{equation}
with $S_{n,\epsilon}:=C\epsilon^{-\kappa}\log^rn$ for any $\epsilon\in(0,\epsilon_0)$ and $n\in\bN.$ Then the sparse-penalized DNN estimator defined by  \labelcref{eq:estimator_cls} with $\cF^\dnn_n=\cF^\dnn_\rho(L_n, N_n, B_n, F)$  satisfies
    \begin{equation}
        \sup_{\P\in\cQ_{F^\star}:f_\ell^\star\in\cF^\star} \E\sbr{\cE_{\P}(\hat{f}_n)}
        \lesssim n^{-\frac{2}{\kappa+2}}\log^{3+r}n.
    \end{equation}
\end{theorem}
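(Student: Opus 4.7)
The plan is to mirror the argument for \cref{cor:conv_reg}, now anchored by \cref{thm:oracle_cls} in place of \cref{thm:oracle_reg}. By the oracle inequality \labelcref{eq:oracle_cls}, it suffices to exhibit, for each sufficiently small $\epsilon>0$, a single DNN $f_\epsilon\in\cF^\dnn_n=\cF^\dnn_\rho(L_n,N_n,B_n,F)$ whose penalized excess risk $\cE_\P(f_\epsilon)+\lambda_n\|\btheta(f_\epsilon)\|_{\clip,\tau_n}$ is of order $n^{-2/(\kappa+2)}\log^{3+r}n$, uniformly over $\P\in\cQ_{F^\star}$ with $f_\ell^\star\in\cF^\star$.

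The only genuinely new ingredient relative to the regression corollary is a quadratic surrogate bound on the excess risk. Because $\ell$ is strictly convex with continuous second derivative and $f_\ell^\star$ minimizes the conditional expected loss, one has the pointwise first-order condition $\E[\ell'(Yf_\ell^\star(\X))Y\mid\X]=0$. Combining this with $Y^2=1$ and a second-order Taylor expansion of $u\mapsto\ell(yu)$ about $u=f_\ell^\star(\x)$, and using that both $f$ and $f_\ell^\star$ are uniformly bounded (by $F$ and $F^\star$ respectively) so that $c_\ell:=\sup_{|u|\le F+F^\star}|\ell''(u)|$ is finite, yields
$$\cE_\P(f)\;\le\;\tfrac{c_\ell}{2}\,\|f-f_\ell^\star\|_{2,\P_\X}^2\qquad\text{for all }f\in\cF^\dnn_n.$$
This converts any $L_2(\P_\X)$ approximation bound for $f_\ell^\star$ into an excess-risk bound, which is precisely the bridge needed to deploy hypothesis \labelcref{eq:approx_cls}.

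With the surrogate in hand, the remaining steps are bookkeeping. For each $\epsilon\in(0,\epsilon_0)$, \labelcref{eq:approx_cls} supplies $f_\epsilon\in\cF^\dnn_\rho(L_n,N_n,B_n,F,S_{n,\epsilon})$ with $\|f_\epsilon-f_\ell^\star\|_{2,\P_\X}\le\epsilon$. Since $\|\btheta\|_{\clip,\tau}\le\|\btheta\|_0$ by the definition of the clipped norm, the penalty on $f_\epsilon$ is at most $S_{n,\epsilon}=C\epsilon^{-\kappa}\log^rn$. Plugging into \cref{thm:oracle_cls} with $\lambda_n\asymp n^{-1}\log^3n$ and taking the infimum over $\epsilon$ gives
$$\E\sbr{\cE_\P(\hat{f}_n)}\;\lesssim\;\inf_{\epsilon\in(0,\epsilon_0)}\left\{\epsilon^2+\frac{\log^{3+r}n}{n}\epsilon^{-\kappa}\right\}\;\vee\;\frac{\log n}{n}.$$
Balancing the two variable terms via $\epsilon_n^{\kappa+2}\asymp n^{-1}\log^{3+r}n$ (which lies in $(0,\epsilon_0)$ for all large $n$) yields $\epsilon_n^2\asymp (n^{-1}\log^{3+r}n)^{2/(\kappa+2)}$; since $2/(\kappa+2)\le1$ and $\log n\ge1$, this is at most $n^{-2/(\kappa+2)}\log^{3+r}n$, which also dominates the residual $n^{-1}\log n$ (strictly, whenever $\kappa>0$, and up to log factors when $\kappa=0$).

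The main obstacle worth flagging is the quadratic surrogate step: one must verify that the Taylor segment joining $f(\x)$ and $f_\ell^\star(\x)$ stays inside $[-F-F^\star,F+F^\star]$ so that $\ell''$ is uniformly bounded along it, and that the first-order condition holds pointwise (which uses strict convexity plus differentiability of $\ell$, together with the sub-Bernoulli structure $Y\in\{-1,1\}$). Everything else is a direct analogue of the regression argument, and the improved log exponent $3+r$ (versus $5+r$ in \cref{cor:conv_reg}) comes entirely from the smaller choice of $\lambda_n$ permitted by \cref{thm:oracle_cls}.
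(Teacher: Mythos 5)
Your proposal is correct and follows essentially the same route as the paper: the paper also derives the quadratic surrogate $\cE_\P(f)\lesssim\|f-f_\ell^\star\|_{2,\P_\X}^2$ by Taylor-expanding the conditional risk $\psi_\x(z)=\E[\ell(Yz)\mid\X=\x]$ about its minimizer $f_\ell^\star(\x)$ (where the first derivative vanishes and $\psi_\x''$ is bounded on $[-F,F]$), then applies \cref{thm:oracle_cls} with the bound $\|\btheta(f)\|_{\clip,\tau_n}\le\|\btheta(f)\|_0\le S_{n,\epsilon}$ and the choice $\epsilon_n=n^{-1/(\kappa+2)}$, matching your balancing step up to the bookkeeping of logarithmic factors.
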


As is done in \cref{sec:nonpar_reg}, we can obtain the convergence rate of the excess risk using \cref{cor:conv_cls} when the optimal decision function $f_\ell^\star$ belongs to one of the function classes considered in \cref{sec:nonpar_reg}. For example, if the optimal decision function $f_\ell^\star$ is in  H\"older space with smoothness $\alpha>0$, the the excess risk of the sparse-penalized DNN  estimator defined by  \labelcref{eq:estimator_cls} with a piecewise linear activation function and $\nu_2=0$ converges to zero at a  rate $n^{-\frac{2\alpha}{2\alpha+d}}\log^{4}n$.

\section{Computation}
\label{sec:compute}

In this section, we propose a scalable optimization algorithm to solve the problem  \labelcref{eq:estimator}.
Due to the nonlinearity of DNNs, finding the global optimum of \labelcref{eq:estimator}
is almost impossible. There are various gradient based optimization algorithms
which effectively reduce the empirical risk $\cL_n(\btheta):=n^{-1}\sum_{i=1}^n\ell(Y_i, f(\X_i|\btheta))$ \citep{duchi2011adaptive, kingma2014adam, luo2019adaptive, liu2019variance}, where $f(\cdot|\btheta)$ denotes the DNN with parameter $\btheta$. These algorithms, however, would not work well
since not only the empirical risk but also the penalty  are nonconvex. To make the problem simpler, we propose to replace the clipped $L_1$ penalty by its convex tight upper bound. The idea of using the convex upper bound is proposed under the name of the CCCP \citep{yuille2003concave}, the difference of convex functions (DC) programming \citep{tao1997convex} and the majorize-minimization (MM) algorithm \citep{lange2013mm}.

Note that the clipped $L_1$ penalty is decomposed as the sum of the convex and concave parts as
        \begin{equation}
           \|\btheta\|_{\clip,\tau}:=\sum_{j=1}^{p}\left( \frac{|\theta_j|}{\tau}\wedge 1\right)
           =\frac{1}{\tau}\sum_{j=1}^{p} |\theta_j| -\frac{1}{\tau} \sum_{j=1}^{p} (|\theta_j|-\tau)  \mathbbm{1}(|\theta_j| \ge \tau),
\label{eq:clipp-decom}
        \end{equation}
where $p$ denotes the dimension of $\btheta$ and the first term of the right-hand side is convex while the second term is concave in $\btheta$. For given current solution $\hat{\btheta}^{(t)},$ the tight convex upper bound of the second term $-\frac{1}{\tau} \sum_{j=1}^{p} (|\theta_j|-\tau)  \ind(|\theta_j| \ge \tau)$ at the current solution $\hat\theta^{(t)}_j$ is given as
        \begin{equation}
            -\frac{1}{\tau}\sum_{j=1}^{p}\textup{sign}\del{\hat{\theta}^{(t)}_j} \del{\theta_j-\tau}
            \mathbbm{1}\del{|\hat\theta^{(t)}_j|>\tau}.
\label{eq:upper}
        \end{equation}
By replacing $-\frac{1}{\tau} \sum_{j=1}^{p} (|\theta_j|-\tau) \mathbbm{1}(|\theta_j| \ge \tau)$
with the tight convex upper bound \labelcref{eq:upper}, the objective function becomes
        \begin{equation}
        \label{eq:cccp_sub}
        Q^*(\btheta|\hat\btheta^{(t)}):=\cL_n(\btheta)-\inn{\frac{\lambda}{\tau}\h_\tau^{(t)}}{\btheta-\tau\one} + \frac{\lambda}{\tau}\|\btheta\|_1,
        \end{equation}
where
 $$\h_\tau^{(t)}:=\del[1]{\textup{sign}(\hat\btheta^{(t)}_j)\mathbbm{1}\del[0]{|\hat\btheta^{(t)}_j|>\tau}}_{j\in[p]}.$$
The following proposition justifies the use of \labelcref{eq:cccp_sub}.
\begin{proposition}\label{prop:1}
For any parameter $\tilde\btheta$ satisfying $Q^*(\tilde\btheta|\hat\btheta^{(t)})\le Q^*(\hat\btheta^{(t)}|\hat\btheta^{(t)})$, we have $Q(\tilde\btheta)\le Q(\hat\btheta^{(t)}),$
where
    $$Q(\btheta):=\cL_n(\btheta)+ \lambda  \|\btheta\|_{\clip,\tau}.$$
\end{proposition}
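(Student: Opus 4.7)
The plan is to apply the standard majorization–minimization (MM) monotonicity principle to the DC decomposition of the clipped $L_1$ penalty displayed in \labelcref{eq:clipp-decom}. Specifically, I would first construct a genuine tight upper bound $\widetilde Q^*(\cdot\mid\hat\btheta^{(t)})$ of $Q$ that equals $Q$ at the anchor $\hat\btheta^{(t)}$, then observe that $\widetilde Q^*$ and the $Q^*$ of \labelcref{eq:cccp_sub} differ only by a $\btheta$-independent constant. Once these two facts are in hand, the hypothesis $Q^*(\tilde\btheta\mid\hat\btheta^{(t)})\le Q^*(\hat\btheta^{(t)}\mid\hat\btheta^{(t)})$ transfers verbatim to $\widetilde Q^*$, and the MM chain
\begin{equation*}
Q(\tilde\btheta)\le \widetilde Q^*(\tilde\btheta\mid\hat\btheta^{(t)}) \le \widetilde Q^*(\hat\btheta^{(t)}\mid\hat\btheta^{(t)}) = Q(\hat\btheta^{(t)})
\end{equation*}
closes the argument.

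For the construction of $\widetilde Q^*$, I would work coordinatewise on the concave summand of \labelcref{eq:clipp-decom}. Each map $\theta_j\mapsto -\tau^{-1}(|\theta_j|-\tau)_+$ is concave, and a valid supergradient at the anchor $\hat\theta^{(t)}_j$ is exactly the $j$th coordinate of $-\tau^{-1}\h^{(t)}_\tau$: it equals $-\textup{sign}(\hat\theta^{(t)}_j)/\tau$ on the smooth region $|\hat\theta^{(t)}_j|>\tau$ and $0$ on $|\hat\theta^{(t)}_j|<\tau$, while at the kink $|\hat\theta^{(t)}_j|=\tau$ one checks that $0$ lies in the (set-valued) superdifferential, so the selection encoded in $\h^{(t)}_\tau$ is admissible everywhere. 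Summing the coordinatewise supergradient inequalities, multiplying by $\lambda$, and adding back $\cL_n(\btheta)$ and the convex piece $\lambda\tau^{-1}\|\btheta\|_1$ (both of which appear identically in $Q$ and in $Q^*$) defines a function $\widetilde Q^*$ satisfying $Q(\btheta)\le\widetilde Q^*(\btheta\mid\hat\btheta^{(t)})$ for every $\btheta$ with equality at $\btheta=\hat\btheta^{(t)}$. A direct rearrangement of the constants arising when the tangent is evaluated at $\hat\btheta^{(t)}$ versus at $\tau\one$ shows that $\widetilde Q^*-Q^*$ is a quantity depending only on $\hat\btheta^{(t)}$ and not on $\btheta$, so the hypothesis on $Q^*$ is genuinely equivalent to the hypothesis on $\widetilde Q^*$.

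The main obstacle is purely bookkeeping: keeping the signs correct in the supergradient, particularly when $\hat\theta^{(t)}_j<-\tau$, and confirming that the additive offset between $\widetilde Q^*$ and $Q^*$ is really $\btheta$-free so that the hypothesis of the proposition carries across. Once these details are handled, the conclusion reduces to the one-line MM chain above, which is merely a specialization of the standard monotonicity property of CCCP/DC procedures to the clipped $L_1$ penalty.
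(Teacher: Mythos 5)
Your proof is correct, and at its core it is the same CCCP/MM chain the paper uses; the paper's proof, however, consists of the two bare assertions that $Q^*(\hat\btheta^{(t)}|\hat\btheta^{(t)})=Q(\hat\btheta^{(t)})$ and $Q(\tilde\btheta)\le Q^*(\tilde\btheta|\hat\btheta^{(t)})$ hold ``by definition,'' whereas you actually construct the majorizer from coordinatewise supergradients and then compare it to $Q^*$. That extra layer turns out to be substantive rather than bookkeeping. The correct tangent of the concave summand $\theta_j\mapsto -\tau^{-1}(|\theta_j|-\tau)_+$ at a point with $|\hat\theta^{(t)}_j|>\tau$ is $-\tau^{-1}\,\textup{sign}(\hat\theta^{(t)}_j)\bigl(\theta_j-\tau\,\textup{sign}(\hat\theta^{(t)}_j)\bigr)$, while \labelcref{eq:upper} uses $-\tau^{-1}\,\textup{sign}(\hat\theta^{(t)}_j)(\theta_j-\tau)$; these agree when $\hat\theta^{(t)}_j>\tau$ but differ by the constant $2$ when $\hat\theta^{(t)}_j<-\tau$. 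Hence $Q^*$ as defined in \labelcref{eq:cccp_sub} equals the genuine tight majorizer $\widetilde Q^*$ minus $2\lambda\,\#\{j:\hat\theta^{(t)}_j<-\tau\}$, and both of the paper's ``by definition'' facts are literally false whenever the current iterate has a coordinate below $-\tau$. The proposition nevertheless holds for exactly the reason you isolate: the discrepancy is independent of $\btheta$, so the hypothesis $Q^*(\tilde\btheta|\hat\btheta^{(t)})\le Q^*(\hat\btheta^{(t)}|\hat\btheta^{(t)})$ is equivalent to the same inequality for $\widetilde Q^*$, and the MM chain run through $\widetilde Q^*$ yields $Q(\tilde\btheta)\le Q(\hat\btheta^{(t)})$. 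Your treatment of the kinks $|\hat\theta^{(t)}_j|=\tau$ (selecting the supergradient $0$, which lies in the superdifferential) is also correct. In short, your argument is a rigorous proof of the statement as written, and it buys something the paper's one-liner does not: it remains valid without first repairing the sign in \labelcref{eq:upper}.
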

\begin{proof}
By definition of $Q^*(\cdot|\hat\btheta^{(t)})$, $Q^*(\hat\btheta^{(t)}|\hat\btheta^{(t)})=Q(\hat\btheta^{(t)})$ and $Q(\tilde\btheta)\le Q^*(\tilde\btheta|\hat\btheta^{(t)})$, which lead to the desired result.
\end{proof}

We apply the proximal gradient descent algorithm \citep{parikh2014proximal} to minimize $Q^*(\btheta|\hat\btheta^{(t)})$. That is, we iteratively update the solution as
    \begin{equations}
     \label{eq:update}
      \hat\btheta^{(t,k+1)}=\argmin_{\btheta}\sbr{\frac{\lambda}{\tau}\|\btheta\|_1+
        \inn{\nabla\cL_n(\hat\btheta^{(t,k)})-\frac{\lambda}{\tau}\h_\tau^{(t)}}{\btheta}
        +\frac{1}{2\eta_t}\norm{\btheta-\hat\btheta^{(t,k)}}_2^2}
    \end{equations}
for $k\in\bN_0$ with $\hat\btheta^{(t,0)}:=\hat\btheta^{(t)},$ where  $\nabla\cL_n(\hat\btheta^{(t,k)})$ is the gradient of $\cL_n(\btheta)$ at $\btheta=\hat\btheta^{(t,k)}$ and $\eta_t$ is a pre-specified learning rate. Then, we let $\hat\btheta^{(t+1)}=\hat\btheta^{(t,k_t^*+1)},$ where
    \begin{equation*}
        k_t^*:=\inf\cbr{k\in\bN_0:Q^*\del[1]{\hat\btheta^{(t,k+1)}|\hat\btheta^{(t)}}\le Q^*\del[1]{\hat\btheta^{(t)}|\hat\btheta^{(t)}}}\wedge \bar{k},
    \end{equation*}
and $\bar{k}$ is the pre-specified maximum number of iterations.
The proximal gradient algorithm is known to reduce
$Q^*(\btheta|\hat\btheta^{(t)})$ well and thus
the proposed algorithm which combines the CCCP and proximal gradient descent algorithm
is expected to decrease the objective function $Q(\btheta)$ monotonically by Proposition \ref{prop:1}.
As an empirical evidence, \cref{fig:obj_ft} draws the curve of the objective function value versus iteration number for a simulated data, which amply shows the monotonicity of our algorithm.

    \begin{figure}
     \centering
        \includegraphics[scale=0.5]{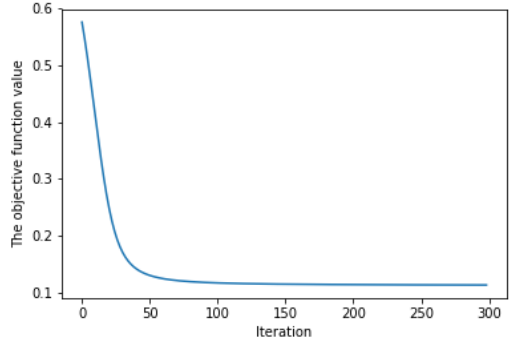}
        \caption{The objective function value versus iteration}
        \label{fig:obj_ft}
     \end{figure}

Note that \labelcref{eq:update} has the closed form solution given as
    \begin{equation}
        \label{eq:cccp_sub_sol}
            \hat\theta^{(t,k+1)}_j=\del{u_{\tau,\lambda,j}^{(t,k)}-\textup{sign}(u_{\tau,\lambda,j}^{(t,k)}) \eta_t\frac{\lambda}{\tau}}\ind\del{\abs{u_{\tau,\lambda,j}^{(t,k)}}\ge \eta_t\frac{\lambda}{\tau}},
        \end{equation}
where
    \begin{equation*}
        u_{\tau,\lambda,j}^{(t,k)}:=\hat\theta_j^{(t,k)} -\eta_t \del{\nabla \cL_n(\hat\btheta^{(t,k)})_j-\frac{\lambda}{\tau}h_{\tau,j}^{(t)}}
    \end{equation*}
for $j\in[p]$. The solution \labelcref{eq:cccp_sub_sol} is a soft-thresholded version of $u_{\tau,\lambda,j}^{(t,k)}$, which is sparse. Thus we can obtain a sparse estimate of the DNN parameter during the training procedure without any post-training pruning algorithm such as \cite{han2015learning, li2016pruning}.

\section{Numerical studies}
\label{sec:numerical}

\subsection{Regression with simulated data}
\label{sec:numerical_reg}

In this section, we carry out simulation studies to illustrate the finite-sample performance of the sparse-penalized DNN estimator (SDNN). We compare the sparse-penalized  DNN estimator with other popularly used regression estimators: kernel ridge regression (KRR), $k$-nearest neighbors (kNN), random forest (RF), and non-sparse DNN (NSDNN). 

For kernel ridge regression we used a radial basis function (RBF) kernel. For both the non-sparse and sparse DNN estimators, we used a network architecture of 5 hidden layers with the numbers of hidden nodes $(100, 100, 100, 100, 100)$. The non-sparse DNN is learned with popularly used optimizing algorithm Adam \citep{kingma2014adam} with learning rate $10^{-3}$. 

We select tuning parameters associated with each estimator by optimizing the performance on a held-out validation data set whose size is one fifth of the size of the training data. 
The tuning parameters include
the scale parameter of the RBF kernel, a degree of regularization for kernel ridge regression, the number of neighbors for  $k$-nearest neighbors,  the depth of the trees for the random forest and
the two tuning parameters $\lambda$ and $\tau$ in the clipped $L_1$ penalty.

We first generate 10-dimensional input $\x$ from the uniform distribution on $[0,1]^{10},$  and
generate  the corresponding response $Y$ from $Y=f^\star(\x)+\epsilon$ for some function $f^\star$, where $\epsilon$ is a standard normal error. The functions used for $f^\star$ are as listed below:
    \begin{align*}
        f_1^\star(\x) &= c_1\sum_{j=1}^{10}(-1)^jj^{-1}x_j\\
        f_2^\star(\x) &= c_2\sin(\|\x\|_1) \\
        f_3^\star(\x) &= c_3\sbr{x_1x_2^2-x_3+\log\del{x_4+4x_5+\exp(x_6x_7-5x_5)}+\tan(x_8+0.1)}\\
        f_4^\star(\x) &= c_4\sbr{\exp\del{3x_1+x_2^2-\sqrt{x_3+5}}+0.01\cot\del{\frac{1}{0.01+|x_4-2x_5+x_6|}}}\\
        f_5^\star(\x) &= c_5\sbr{3\exp (\|\x\|_2)\ind\del{x_2\ge x_3^2 } + x_3^{x_4}-x_5x_6x_7^4}\\
        f_6^\star(\x) &= c_6\big[4x_1x_2x_3x_4\ind\del{x_3+x_4\ge 1, x_5\ge x_6} +\tan(\|\x\|_1)\ind\del{x_1^2x_7x_8\ge x_9x_{10}^3 }\big].
    \end{align*}
The functions $f_1^\star$ and $f_2^\star$ are globally smooth functions, $f_3^\star$ and $f_4^\star$ are composition structured functions and $f_5^\star$ and $f_6^\star$ are piecewise smooth functions. The constants $c_1,\dots, c_6$ are chosen so that the error variance becomes $5\%$ of the variance of the response. 

The performance of each estimator is measured by the empirical $L_2$ error computed based on newly generated $10^5$ simulated data. \cref{fig:boxplot} draws the boxplots of the empirical $L_2$ errors 
of the 5 estimators over 50 simulation replicates for the six true functions.  We see that the sparse-penalized DNN estimator outperforms the other competing estimators for the all 6 true functions, even though it is less stable compared to the other stable estimators (KRR, KNN and RF).

\begin{figure}
    \centering
    \begin{subfigure}[c]{0.45\textwidth}
        \includegraphics[scale=0.45]{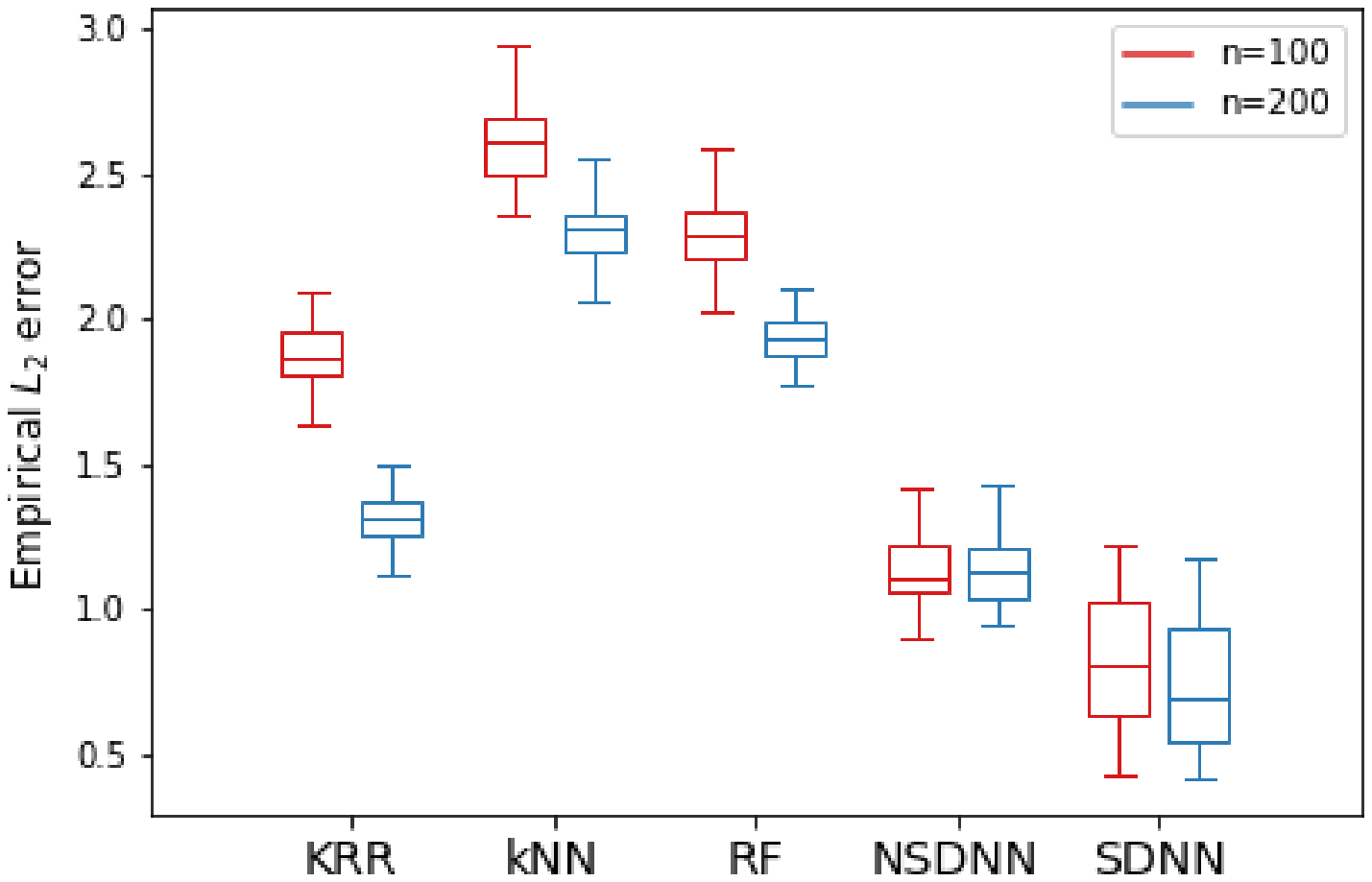}
        \subcaption{$f_1^\star$}
    \end{subfigure}\qquad
    \begin{subfigure}[c]{0.45\textwidth}
        \includegraphics[scale=0.45]{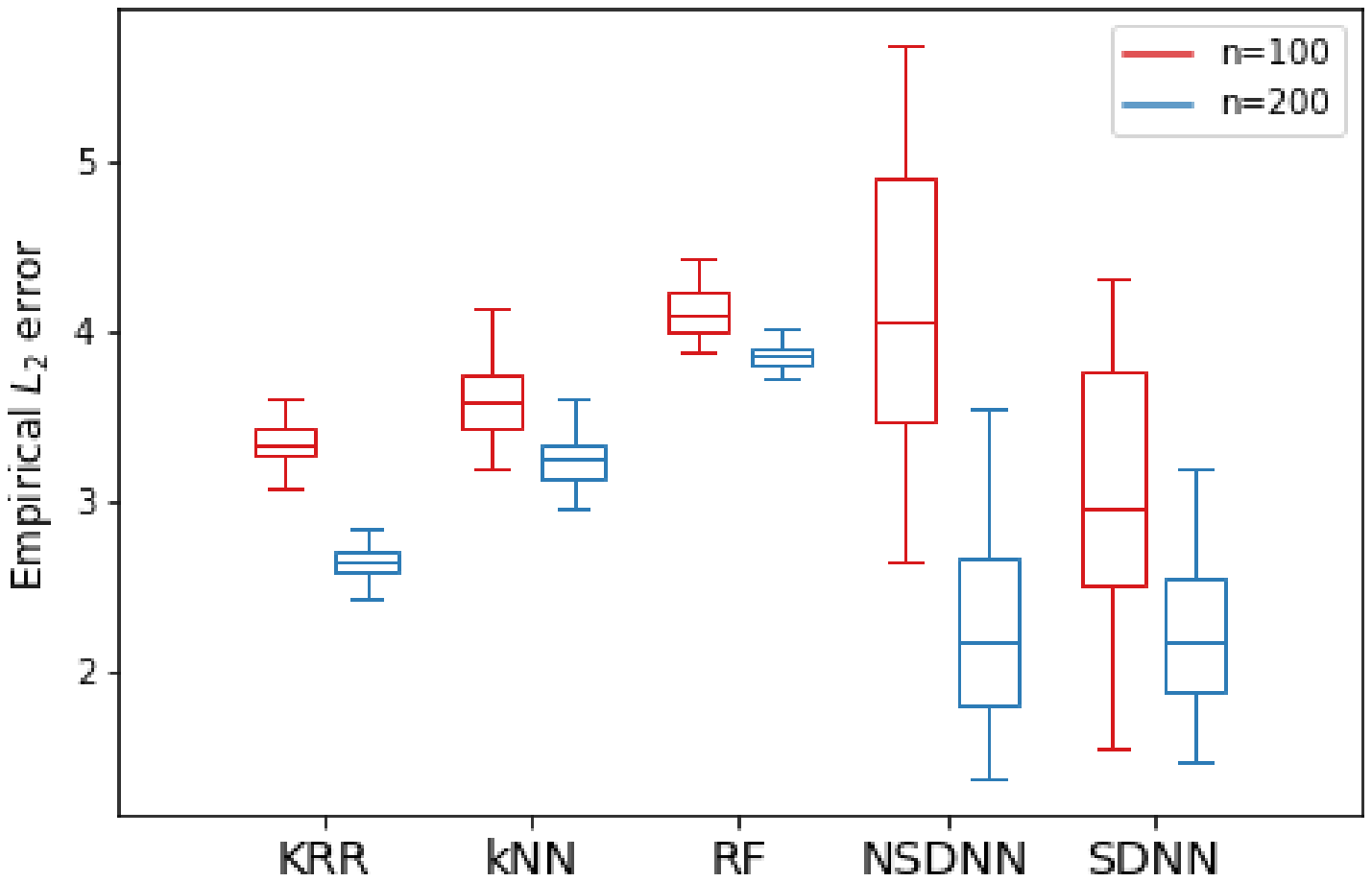}
        \subcaption{$f_2^\star$}
    \end{subfigure}\\
        \begin{subfigure}[c]{0.45\textwidth}
        \includegraphics[scale=0.45]{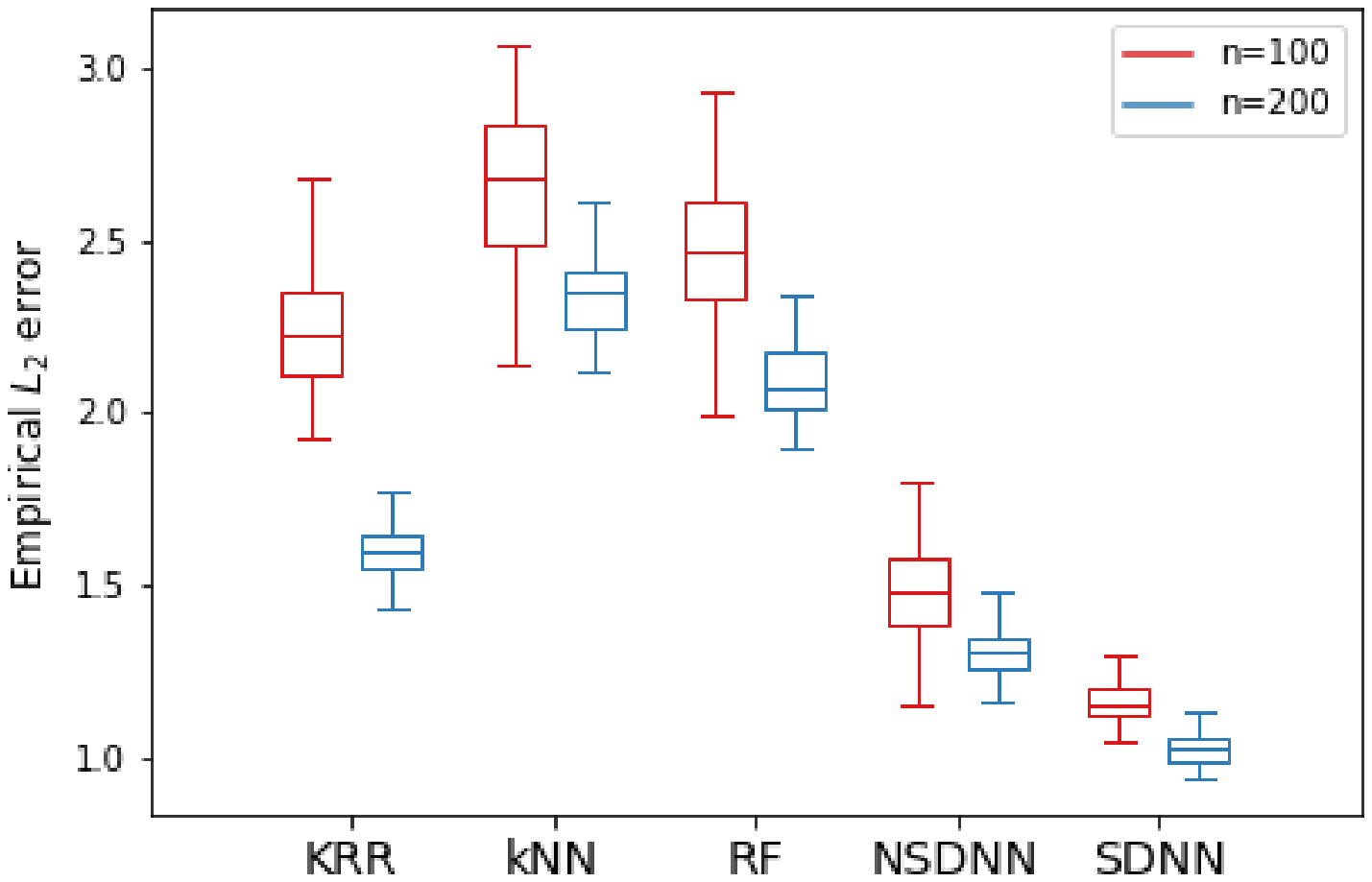}
        \subcaption{$f_3^\star$}
    \end{subfigure}\qquad
    \begin{subfigure}[c]{0.45\textwidth}
        \includegraphics[scale=0.45]{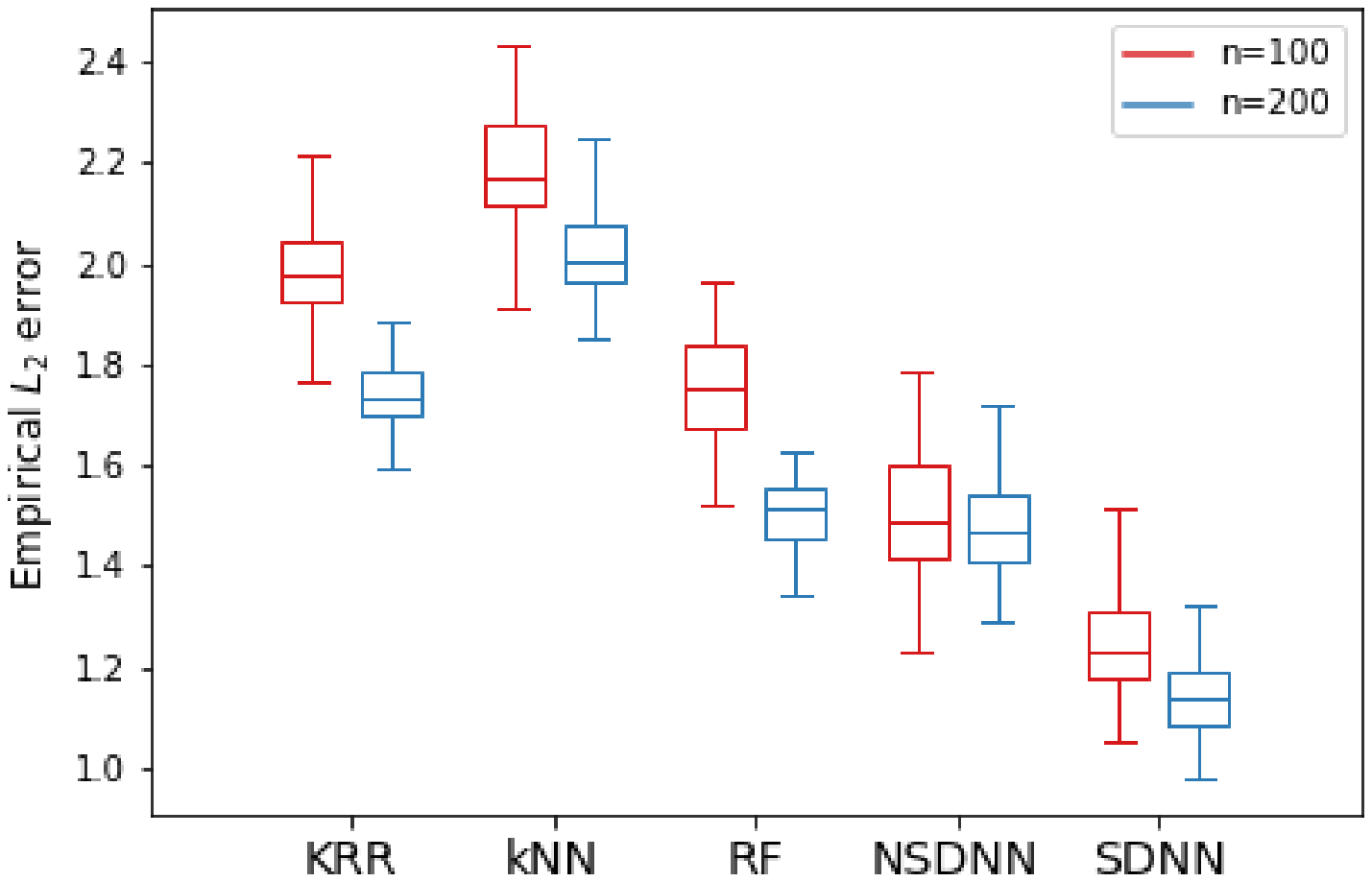}
        \subcaption{$f_4^\star$}
    \end{subfigure}\\
        \begin{subfigure}[c]{0.45\textwidth}
        \includegraphics[scale=0.45]{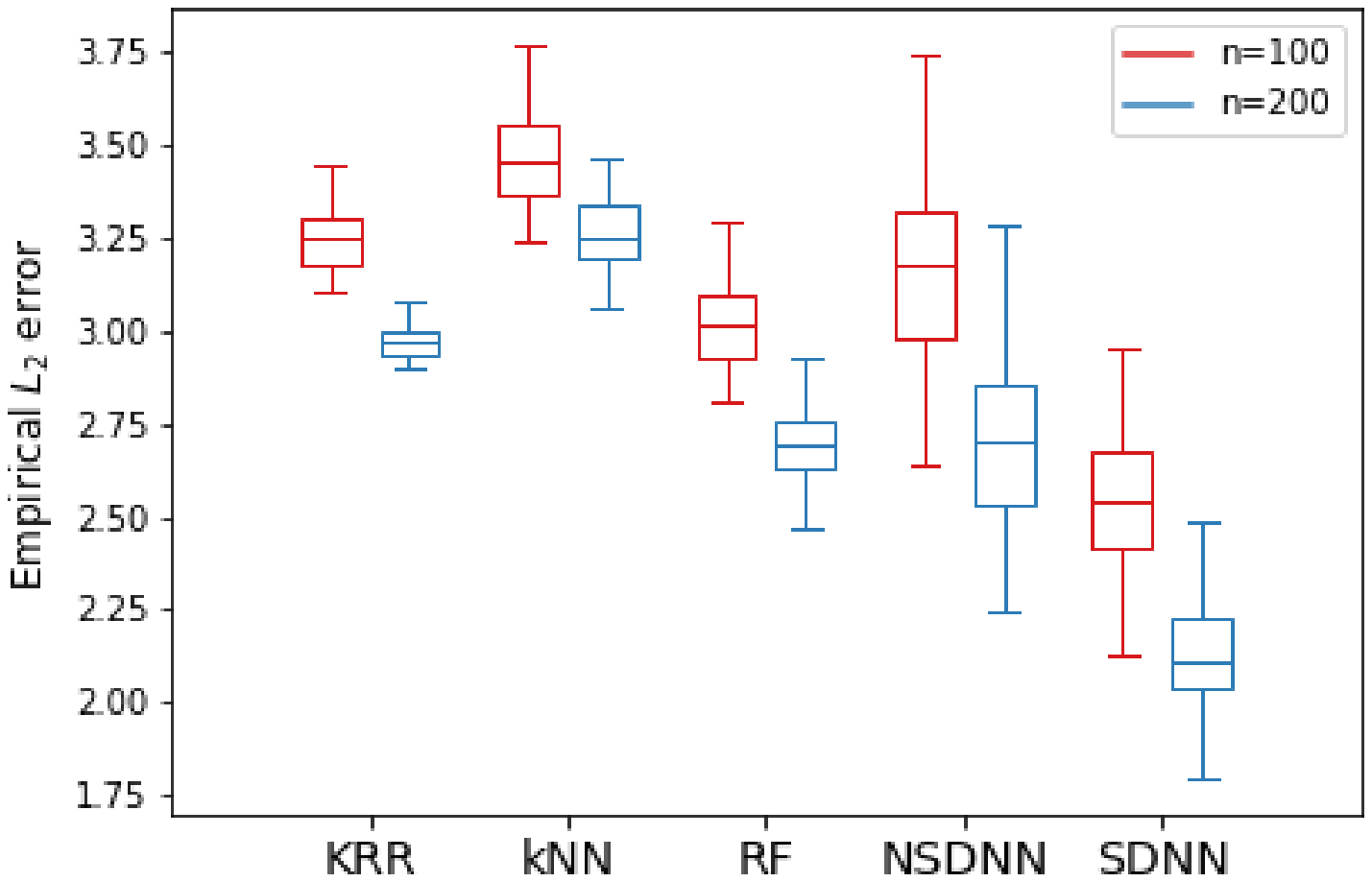}
        \subcaption{$f_5^\star$}
    \end{subfigure}\qquad
    \begin{subfigure}[c]{0.45\textwidth}
        \includegraphics[scale=0.45]{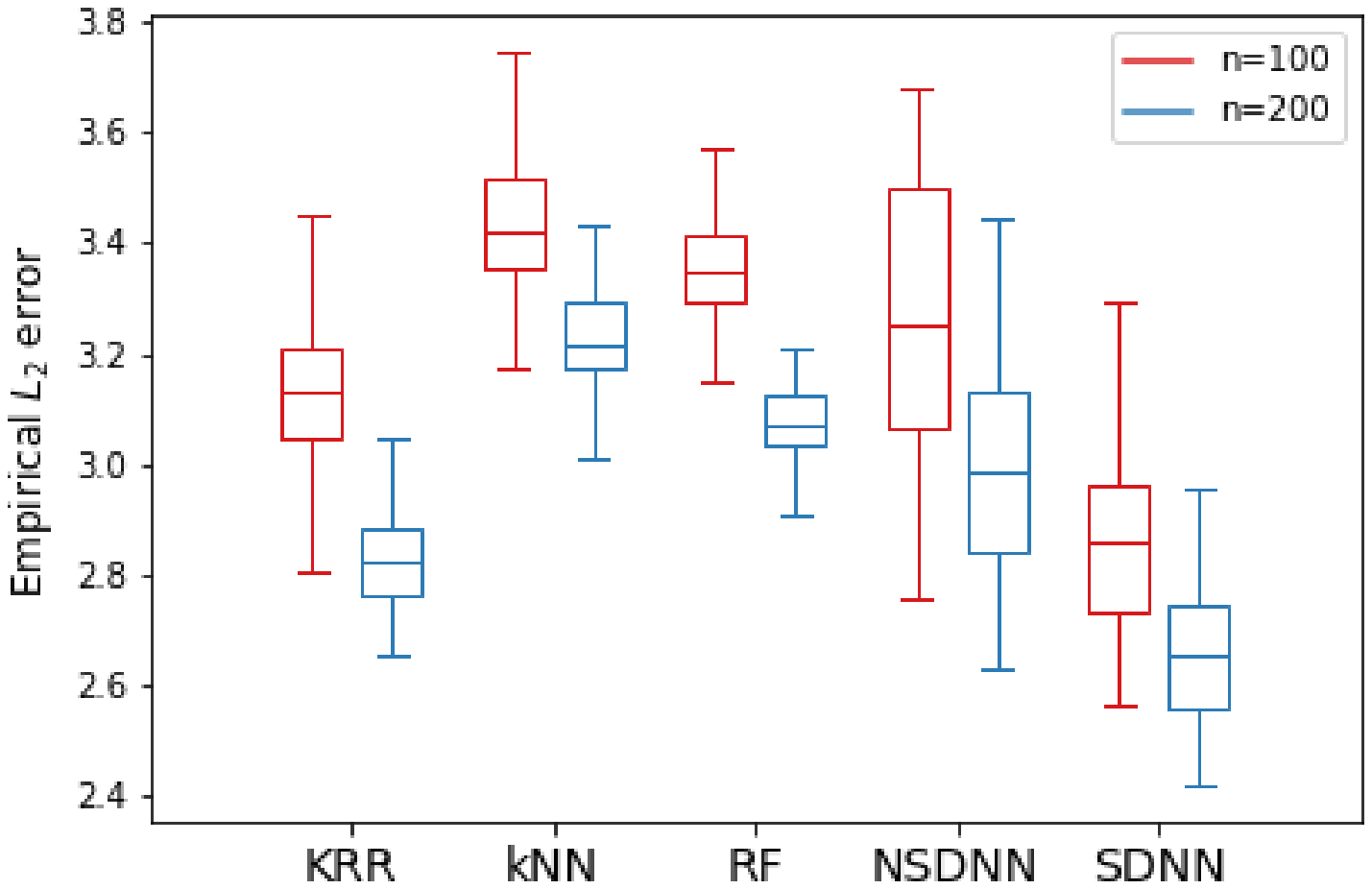}
        \subcaption{$f_6^\star$}
    \end{subfigure}\\
    \caption{Simulation results for the true functions $f_1^\star,\dots, f_6^\star$, respectively. We draw the boxplots of the empirical $L_2$ errors of the 5 estimators from 50 simulation replicates.}
    \label{fig:boxplot}
\end{figure}

\cref{tab:sparsity} presents the sparsity, which is defined as the percentage of non-zero parameters, of the sparse-penalized DNN estimate for each simulation setup. The sparsity ranges from 20\% for estimating the simple linear function $f_1^\star$ to 68\% for estimating the more complex piecewise smooth function $f_5^\star$. This result indicates that  the sparse-penalized DNN estimator can improve the prediction accuracy compared to the non-sparse DNN  by removing redundant parameters adaptively to the ``complexity'' of the true function, as our theory suggests.

\begin{table}[]
\centering
\caption{The averaged sparsity (the percentage of non-zero parameters) and standard deviation in the paranthesis over 50 simulation replicates.}
\label{tab:sparsity}
\begin{tabular}{ccc}
\hline
\multirow{2}{*}{True function} & \multicolumn{2}{c}{Sample size}      \\\cline{2-3}
                               & $n=100$           & $n=200$          \\\hline
$f_1^\star$                    & 22.87\% (4.21\%)  & 20.5\% (3.4\%)   \\
$f_2^\star$                    & 52.96\% (8.67\%)  & 58.27\% (8.69\%) \\
$f_3^\star$                    & 45.23\% (7.88\%)  & 47.16\% (5.24\%) \\
$f_4^\star$                    & 37.38\% (6.95\%)  & 39.29\% (6.93\%) \\
$f_5^\star$                    & 65.94\% (9.12\%)  & 67.43\% (8.65\%) \\
$f_6^\star$                    & 61.56\% (10.64\%) & 67.0\% (8.46\%) \\\hline
\end{tabular}
\end{table}

\subsection{Classification with real data sets}

We compare the sparse-penalized DNN estimator  with other competing estimators by analyzing the following four data sets from the UCI repository:
    \begin{itemize}
        \item Haberman:  Haberman's survival data set contains 306 patients who had undergone surgery for breast cancer at the University of Chicago's Billings Hospital. The task is to predict whether each patient survives after 5 years after the surgery or not.
        \item Retinopathy: This data set contains features extracted from 1,151 eye's images. The task is to predict whether an eye's image contains signs of diabetic retinopathy or not based on the other features.
        \item Tic-tac-toe: This data set contains all the 957 possible board configurations at the end of tic-tac-toe games which are encoded to 27 input variables. The task it to predict the winner of the game.
        \item Promoter: This data set consists of A, C, G, T nucleotides at 57 positions for
106 gene sequences, and each nucleotide is encoded to a 3-dimensional one-hot vector. The task is to predict whether a gene is promoters or non-promoter.
    \end{itemize}

For competing estimators, we considered a support vector machine (SVN), $k$-nearest neighbors (kNN), random forest (RF), and non-sparse DNN (NSDNN). For the support vector machine, we used the RBF kernel. The tuning parameters in each methods are selected by evaluation on a validation data set whose size is one fifth of the size of whole training data. 
    
We splits the whole data into training and test data sets with the ratio 7:3, then evaluate the classification accuracy of each learned estimator on the test data set. We repeat this splits 50 times. \cref{tab:cls_simul} presents the averaged  classification accuracy over 50 training-test splits. The proposed sparse-penalized DNN estimator  is the best for  Tic-tac-toe and Promoter data sets, and the second best for the other two data sets. Moreover, the sparse-penalized DNN estimator is
similarly stable to the other competitors.

\begin{table}[]
\caption{The averaged classification accuracies and standard errors
in the paranthesis over 50 training-test splits of the four UCI data sets.}
\label{tab:cls_simul}
\begin{tabular}{ccccc}
\hline
 Data  & Haberman     & Retinopathy       & Tic-tac-toe      & Promoter        \\\hline
$(n,d)$  & (214, 3)& (805, 19) & (669, 27) & (74, 171) \\ \hline\hline
SVM  & 0.7298 (0.0367) & 0.5737 (0.0282) & 0.8467 (0.0243) & 0.7887 (0.1041) \\
kNN  & \textbf{0.7587 (0.0366)} & 0.6436 (0.0263) & 0.9714 (0.0102) & 0.8012 (0.0649) \\
RF   & 0.7365 (0.0377) & 0.665 (0.0263)  & 0.9777 (0.0103) & 0.8725 (0.0582) \\
NSDNN  & 0.7328 (0.0464) & \textbf{0.7158 (0.0293)} & 0.9735 (0.0107) & 0.8594 (0.062)  \\
SDNN & 0.752 (0.0382)  & 0.6987 (0.0375) & \textbf{0.98 (0.0085) }  & \textbf{0.8769 (0.0474)}
\\\hline
\end{tabular}
\end{table}

To understand the suboptimal accuracy of the sparse-penalized DNN estimator for the two data sets Haberman and Retinopathy, which are
of relatively low input dimensional, we conduct an additional toy experiment that examines an effect of the input dimension. We consider the following probability model for generating simulated data. For a given input dimension $d\in\bN$, let $\X$ be a random vector following the uniform distribution on $[0,1]^d$. Then for a given $\X=\x$, the random variable $Y\in\{-1,1\}$ has the probability mass $\P(Y=1|\X=\x)=(1+\exp(-g^\star(\x)))^{-1}$, where $g^\star$ is a function given by
    \begin{equation*}
        g^\star(\x)=\sum_{j=1}^{\floor{d/2}}x_j^2-\mu
    \end{equation*}
for a constant $\mu\in\R$. We choose $\mu$ so that $\E g^\star(\X)=0$. For each input dimension $d\in\{5,20,35,50\}$, we generate 50 training data sets with size $n=100$ from the above probability model. Then we apply the five estimators considered in this section to the simulated data sets and obtain classification accuracies computed on the test data set independently generated from the same probability model.

The result is presented in \cref{fig:toy}, where the averaged  classification accuracies over 50 simulation replicates for each estimators are reported. For the smallest $d,$  the non-DNN estimators perform  better than both the non-sparse and sparse-penalized DNN estimators. However, as the input dimension increases, the performance of the sparse-penalized DNN estimator is improved quickly and it becomes superior to the other competitors. This result  explains partly why the sparse-penalized DNN estimator does not perform best for the two low dimensional data sets  Haberman $(d=3)$ and Retinopathy $(d=19)$.

\begin{figure}
    \centering
    \includegraphics[scale=0.7]{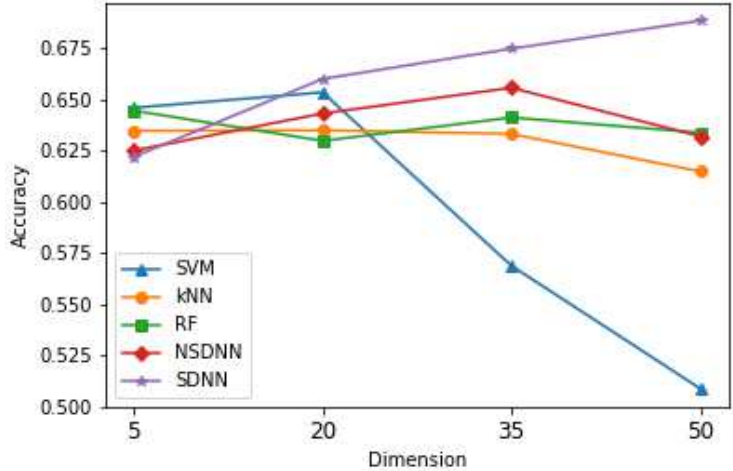}
    \caption{The averaged classification accuracies over 50  simulation replicates with varying input dimensions. }
    \label{fig:toy}
\end{figure}

\section{Conclusion}
\label{sec:conclusion}

In this paper, we proposed a  sparse-penalized DNN estimator leaned with the clipped $L_1$ penalty and
proved the theoretical optimiality.  An interesting conclusion is that the sparse-penalized DNN estimator is extremely flexible so that it achieves the optimal minimax convergence rate (up to a logarithmic factor) without using any information about the true function for various situations.  
Moreover, we proposed an efficient and scalable optimization algorithm so that the sparse-penalized DNN estimator can be used in practice without much difficulty.

There are several possible future works. For binary classification, we only consider the strictly convex losses which are popular in learning DNNs. We have not considered convex but not strictly convex losses such as the hinge loss. We expect that the sparse-penalized DNN estimator learned with the hinge loss and the clipped $L_1$ penalty can attain the minimax optimal convergence rates for estimation of a decision boundary. 

In this paper, we only considered a fully connected DNN. We may use a more structured architecture such as the convolutional neural network when the information of the structure of the true function is available. It would be interesting to investigating how much structured neural networks are helpful compared to simple fully connected neural networks.

Theoretical properties of generative models such as generative adversarial networks \citep{goodfellow2014generative} and variational autoencoders \citep{kingma2013auto} have not been fully studied even though some results are available \citep{liang2018well, briol2019statistical, uppal2019nonparametric}. A difficulty in generative models would be that we have to work with functions where the dimension of the range is larger than the dimension of the domain.

\begin{appendices}
\section{Proofs}
\label{sec:sparse_proofs}

For notational simplicity, we only consider a 1-Lipschitz activation function $\rho$ with $\rho(0)=0$. 
Extensions of the proofs for general $C$-Lipschitz activation functions with arbitrary value of $\rho(0)$ can be done easily.

\subsection{Covering numbers of the DNN classes}

We provide a covering number bound for a class of DNNs with a certain sparsity constraint.
Let $\cF$ be a given class of real-valued functions defined on $\cX$. Let $\delta>0$. A collection $\{f_i:i\in[N]\}$ is called a \textit{$\delta$-covering set} of $\cF$ with respect to the  norm $\|\cdot\|$ if, for all $f\in\cF$, there exists $f_i$ in the collection such that $\|f-f_i\|\le\delta$. The cardinality of the minimal $\delta$-covering set is called the \textit{$\delta$-covering number} of $\cF$ with respect to   the  norm $\|\cdot\|$ , and is denoted by $\cN(\delta, \cF, \|\cdot\|)$.  The following proposition
gives the covering number bound of the class of DNNs with the $L_0$ sparsity constraint.

\begin{proposition}[Proposition 1 of \cite{ohn2019smooth}]
\label{lem:entropy0}
Let $L\in \bN$, $N\in\bN$, $B\ge1$, $F>0$ and $S>0$. Then for any $\delta>0$,
    \begin{equation}
    \log \cN\del{\delta, \cF_{\rho}^\dnn(L, N, B, F, S), \|\cdot\|_\infty} \le 2S(L+1)\log\del{\frac{(L+1)(N+1)B}{\delta}}.
    \end{equation}
\end{proposition}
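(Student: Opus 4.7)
The standard route is to reduce covering the function class to covering its parameter space, via a sensitivity lemma showing that small $\ell_\infty$ perturbations of the weights yield small uniform perturbations of the network output. Note that a DNN in $\cF_\rho^\dnn(L,N,B,F)$ is parametrized by a vector $\btheta\in[-B,B]^T$ of total dimension $T\le(L+1)(N+1)^2$; the $L_0$-constraint $\|\btheta\|_0\le S$ picks out a support of size at most $S$ inside these $T$ coordinates.

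First, I would prove by induction on depth the sensitivity bound
\begin{equation*}
\|f_\btheta - f_{\btheta'}\|_{\infty} \,\le\, \epsilon\,(L+1)\bigl((N+1)B\bigr)^{L}
\end{equation*}
whenever $\|\btheta-\btheta'\|_\infty\le\epsilon$ and both networks lie in the class. The ingredients are that $\rho$ is $1$-Lipschitz with $\rho(0)=0$ (so layer activations at depth $\ell$ are bounded by $((N+1)B)^{\ell}\|\x\|_\infty$), and that each affine map $A_\ell$ has $\ell_\infty\to\ell_\infty$ operator norm at most $(N+1)B$. A telescoping decomposition $f_\btheta - f_{\btheta'} = \sum_{\ell=1}^{L+1}\Delta_\ell$, with $\Delta_\ell$ isolating the perturbation at layer $\ell$ while the preceding layers use $\btheta'$ and the subsequent layers use $\btheta$ as fixed $1$-Lipschitz compositions, gives the displayed bound after summing $L+1$ terms of size at most $\epsilon\,((N+1)B)^{L}$.

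Second, I build the covering. Choose $\epsilon\asymp\delta/[(L+1)((N+1)B)^{L}]$ so that the induced sup-norm error is at most $\delta$. Every admissible $\btheta$ is specified by a choice of support (at most $\binom{T}{S}$ possibilities) together with values on an $\epsilon$-grid in $[-B,B]^S$ (at most $(2B/\epsilon)^S$ possibilities), so
\begin{equation*}
\log\cN\bigl(\delta,\cF_\rho^\dnn(L,N,B,F,S),\|\cdot\|_\infty\bigr) \,\le\, S\log(eT/S) + S\log(2B/\epsilon).
\end{equation*}
Substituting $T\le(L+1)(N+1)^2$ and $\log(1/\epsilon)\lesssim(L+1)\log((L+1)(N+1)B/\delta)$, and absorbing numerical constants using $B\ge1$, produces the stated bound $2S(L+1)\log((L+1)(N+1)B/\delta)$.

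The main technical obstacle is making the induction bookkeeping in the sensitivity lemma precise: the product $((N+1)B)^{L}$ arises from the $L$ successive operator norms, the $L+1$ prefactor from the telescope, and constant factors must be tracked carefully so that after taking logs one lands exactly on $2S(L+1)\log(\cdots)$ without an extra $L$ or $\log L$ outside. Once that lemma is in hand, the counting of supports and grid points together with the passage between the $\ell_\infty$ norm on parameters and the sup-norm on outputs is routine.
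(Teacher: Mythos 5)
Your approach is the standard parameter-grid argument and matches how this bound is actually obtained: the paper itself does not prove \cref{lem:entropy0} (it imports it as Proposition~1 of the cited reference), but its own \cref{lem:lip} is exactly the sensitivity lemma in your first step, and the rest is the support-plus-grid count you describe. One bookkeeping slip: the correct Lipschitz constant is $(L+1)\bigl((N+1)B\bigr)^{L+1}$ rather than $(L+1)\bigl((N+1)B\bigr)^{L}$ (each of the $L+1$ telescoping terms already carries the full product of operator norms together with the $(N+1)$ factor from perturbing one affine map), but this only adds an extra $\log\bigl((N+1)B\bigr)$ inside the logarithm and is absorbed by the factor $2(L+1)$ in the stated bound.
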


The following lemma is a technical one.

\begin{lemma}
\label{lem:lip}
Let $L\in \bN$, $N\in\bN$ and $B\ge1.$ For any two DNNs $f_1,f_2\in\cF_\rho(L, N, B, \infty)$, we have
    \begin{equation*}
        \|f_1-f_2\|_{\infty, [0,1]^d} \le (L+1)(B(N+1))^{L+1}\norm{\btheta(f_1)-\btheta(f_2)}_\infty
    \end{equation*}
\end{lemma}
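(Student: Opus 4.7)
The plan is a standard layer-by-layer perturbation argument: I will show that the discrepancy between the two networks in the infinity norm propagates through each layer with a controlled amplification, and then unroll the resulting recursion.

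First, I will set up the intermediate quantities. Write $h_l^{(i)}(\x)$ for the output of the $l$-th hidden layer of $f_i$, with $h_0^{(i)}(\x) := \x$, and let $A_l^{(i)}(\y) = \W_l^{(i)} \y + \b_l^{(i)}$ denote the $l$-th affine map of network $f_i$, so that $h_l^{(i)} = \rho_l(A_l^{(i)}(h_{l-1}^{(i)}))$ for $l \in [L]$ and $f_i = A_{L+1}^{(i)}(h_L^{(i)})$. As a preliminary step, I will bound the sup-norm of the activations themselves: using $\x \in [0,1]^d$, $\rho$ being $1$-Lipschitz with $\rho(0)=0$ (so $|\rho(z)|\le|z|$), and $\|\btheta(f_i)\|_\infty \le B$, an induction on $l$ gives
\begin{equation*}
\|h_l^{(i)}\|_{\infty,[0,1]^d} \le (B(N+1))^l,
\end{equation*}
since a row of $\W_l^{(i)}$ has at most $N$ entries bounded by $B$ and the bias adds one more term of size $\le B$.

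Next, let $D_l := \|h_l^{(1)} - h_l^{(2)}\|_{\infty, [0,1]^d}$ and $\Delta := \|\btheta(f_1) - \btheta(f_2)\|_\infty$. The key decomposition is
\begin{equation*}
A_l^{(1)}(h_{l-1}^{(1)}) - A_l^{(2)}(h_{l-1}^{(2)})
= \W_l^{(1)}\bigl(h_{l-1}^{(1)} - h_{l-1}^{(2)}\bigr) + \bigl(\W_l^{(1)} - \W_l^{(2)}\bigr) h_{l-1}^{(2)} + \bigl(\b_l^{(1)} - \b_l^{(2)}\bigr).
\end{equation*}
Bounding each summand in $\|\cdot\|_\infty$ by the width-$N$ row sums, applying $1$-Lipschitz $\rho$, and using the previous step together with $B \ge 1$, yields the recursion
\begin{equation*}
D_l \le B(N+1)\, D_{l-1} + (B(N+1))^l\, \Delta.
\end{equation*}
Since $D_0 = 0$, an easy induction gives $D_l \le l\, (B(N+1))^l \,\Delta$.

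Finally, applying the same decomposition one more time to the output affine map $A_{L+1}$ (which is not composed with an activation) produces
\begin{equation*}
\|f_1 - f_2\|_{\infty,[0,1]^d} \le B(N+1)\, D_L + (B(N+1))^{L+1}\,\Delta \le (L+1)(B(N+1))^{L+1}\,\Delta,
\end{equation*}
which is the claim. There is no genuine obstacle here; the only thing to be careful about is consistently using $B \ge 1$ and the width bound $N$ to absorb the bias and matrix-row-sum constants into a clean factor of $B(N+1)$ per layer so that the final power of $L+1$ emerges from counting contributions from each of the $L+1$ affine maps.
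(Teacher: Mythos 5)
Your proof is correct: the activation bound $\|h_l^{(i)}\|_{\infty}\le (B(N+1))^l$, the recursion $D_l\le B(N+1)D_{l-1}+(B(N+1))^l\Delta$, and its unrolling to $D_l\le l(B(N+1))^l\Delta$ all check out, and the final affine step delivers exactly the stated constant. Your route differs from the paper's in how the discrepancy is organized: you propagate the error forward through the layers via a discrete Gr\"onwall-type recursion on $D_l$, whereas the paper uses a telescoping (hybrid) decomposition, writing $f_1-f_2$ as a sum over $l$ of terms in which only the $l$-th affine map is swapped, i.e.\ $[f_1]^+_{l+1}\circ A_{1,l}\circ[f_2]^-_l-[f_1]^+_{l+1}\circ A_{2,l}\circ[f_2]^-_l$. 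The paper's decomposition therefore needs two ingredients — the same bound $\|[f]^-_l(\x)\|_\infty\le ((N+1)B)^{l-1}$ on intermediate activations that you use, plus a Lipschitz bound $((N+1)B)^{L+1-l}$ for the suffix subnetwork $[f]^+_{l+1}$ — and the factor $L+1$ appears directly as the number of summands; your recursion avoids the suffix Lipschitz estimate altogether, with the factor $L+1$ emerging from the unrolled sum instead. Both arguments use identical per-layer constants and yield the same final bound, so the difference is one of bookkeeping rather than substance.
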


\begin{proof}
For $f\in \cF_\rho(L, N, B, \infty)$ expressed as
    \begin{equation*}
       f(\x)= A_{L+1}\circ\rho_L\circ A_{L}\circ\cdots \circ\rho_1\circ A_1(\x),
    \end{equation*}
we define $[f]_l^-:[0,1]^d\mapsto\R^{N-1}$ and $ [f]_l^+:\R^{N-1}\mapsto\R$ for $l\in \{2,\dots, L\}$ by
    \begin{align*}
                [f]^-_{l}(\cdot)&:=\rho_{l-1}\circ A_{l-1}\circ\cdots \circ\rho_1\circ A_1(\cdot),\\
        [f]^+_{l}(\cdot)&:= A_{L+1}\circ\rho_L\circ A_{L}\circ\cdots  \rho_l\circ\sA_l\circ\rho_{l-1}(\cdot).
    \end{align*}
Corresponding to the last and first layer, we define   $f^-_{1}(\x)=\x$ and $f^+_{L+2}(\x)=\x$. Note that $f=[f]^+_{l+1}\circ A_l \circ [f]^-_{l}$.

Let $\W_l$ and $\b_l$ be the weight matrix and bias vector at the $l$-th hidden layer of $f$. Note that both the numbers of rows and columns of $\W_l$ are less than $N-1$. Thus for any $\x\in[0,1]^d$
    \begin{align*}
        \norm{ [f]^-_{l}(\x)}_\infty 
        &\le \norm{  \W_{l-1}[f]^-_{l-1}(\x) + \b_{l-1}}_\infty \\
        &\le (N+1)B\norm[0]{[f]^-_{l-1}(\x)}_\infty + B\\
        &\le (N+1)B\del{\norm[0]{ [f]^-_{l-1}(\x)}_\infty \vee1}\\
        &\le (N+1)B\del{((N+1)B\norm[0]{ [f]^-_{l-2}(\x)}_\infty \vee1 )\vee1}\\
        &\le ((N+1)B)^2\del{\norm[0]{ [f]^-_{l-2}(\x)}_\infty \vee1}\\
        &\le  ((N+1)B)^{l-1}\del{\norm[0]{\x}_\infty \vee1}\\
        &=((N+1)B)^{l-1},
    \end{align*}
where the fifth inequality follows from  the assumption that $(N+1)B\ge 1$. Similarly, we can show that for any $\z_1,\z_2\in \R^{N}$,
    \begin{align*}
        \abs{ [f]^+_{l+1}(\z_1)- [f]^+_{l+1}(\z_2)}
        \le ((N+1)B)^{L+1-l}\|\z_1-\z_2\|_\infty.
    \end{align*}

For $f_1,f_2\in\cF_\rho(L, N, B, \infty)$, letting $A_{j,l}$ be the affine transform 
at the $l$-th hidden layer of $f_j$ for $j=1,2$, we have for any $\x\in[0,1]^d$,
    \begin{align*}
        |f_1(\x)-f_2(\x)|
        &\le \abs{\sum_{l=1}^{L+1}\sbr{ [f_1]^+_{l+1}\circ A_{1,l} \circ [f_2]^-_{l}(\x) - [f_1]^+_{l+1}\circ A_{2,l} \circ [f_2]^-_{l}(\x)}}\\
        &\le \sum_{l=1}^{L+1} ((N+1)B)^{L+1-l}\norm{(A_{1,l}-A_{2,l}) \circ [f_2]^-_{l}(\x)}_\infty\\
        &\le \sum_{l=1}^{L+1} ((N+1)B)^{L+1-l}\|\btheta(f_1)-\btheta(f_2)\|_\infty\cbr{N\norm{[f_2]^-_{l}(\x)}_\infty+1}\\
         &\le \sum_{l=1}^{L+1} ((N+1)B)^{L+1-l}\|\btheta(f_1)-\btheta(f_2)\|_\infty\cbr{N ((N+1)B)^{l-1}+1}\\
        &\le   (L+1) ((N+1)B)^{L+1}\|\btheta(f_1)-\btheta(f_2)\|_\infty,
    \end{align*}
which completes the proof.
\end{proof}

Using  \cref{lem:entropy0} and \cref{lem:lip}
we can compute an upper bound of the $\delta$-covering number of a class of DNNs with a restriction on the clipped $L_1$ norm when $\delta$ is not too small, which is stated in the following proposition.

\begin{proposition}
\label{lem:entropy_cl1}
Let $L\in \bN$, $N\in\bN$, $B\ge1$, $F>0$ and $\tau>0.$ Let
    \begin{align*}
        \check{\cF}_{\rho, \tau}^\dnn(L, N, B, F, S)&:=\cbr{f\in\cF_\rho^\dnn(L, N, B, F):\|\btheta(f)\|_{\mathsf{clip},\tau}\le S}.
    \end{align*}
 Then we have that for any $\delta>\tau (L+1)((N+1)B)^{L+1}$,
    \begin{equations}
    &\log\cN\del{\delta,  \check{\cF}_{\rho, \tau}^\dnn(L, N, B, F, S), \|\cdot\|_\infty}\\
    &\le \log\cN\del{\delta-\tau (L+1)((N+1)B)^{L+1}, \cF_{\rho}^\dnn(L, N, B, F, S), \|\cdot\|_\infty}\\
     &\le 2S(L+1)\log\del{\frac{(L+1)(N+1)B}{\delta-\tau (L+1)((N+1)B)^{L+1}}}.
    \end{equations}
\end{proposition}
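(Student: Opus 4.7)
The second inequality is immediate from Proposition \ref{lem:entropy0} applied at resolution $\delta_0 := \delta - \tau(L+1)((N+1)B)^{L+1}$, which is positive by the hypothesis $\delta > \tau(L+1)((N+1)B)^{L+1}$. So the real content is the first inequality, and I would prove it by a \emph{hard-thresholding reduction}: show that every $f\in \check{\cF}_{\rho,\tau}^\dnn(L,N,B,F,S)$ is within $\tau(L+1)((N+1)B)^{L+1}$ in $\|\cdot\|_\infty$ of some element of $\cF_\rho^\dnn(L,N,B,F,S)$, and then glue a $\delta_0$-cover of the latter class by the triangle inequality.

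\textbf{Construction of the threshold map.} For $f\in\check{\cF}_{\rho,\tau}^\dnn(L,N,B,F,S)$ with $\btheta(f)=(\theta_j)_{j\in[p]}$, let $f_\tau$ be the DNN with the same architecture but coordinate-thresholded parameters $\tilde\theta_j := \theta_j\,\ind(|\theta_j|\ge \tau)$. Two properties follow directly from the construction. First,
\begin{equation*}
\|\btheta(f_\tau)\|_0 = \sum_{j=1}^{p}\ind(|\theta_j|\ge \tau) \le \sum_{j=1}^{p}\left(\frac{|\theta_j|}{\tau}\wedge 1\right) = \|\btheta(f)\|_{\clip,\tau}\le S,
\end{equation*}
so $f_\tau$ satisfies the $L_0$-sparsity constraint. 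Second, $\|\btheta(f)-\btheta(f_\tau)\|_\infty\le\tau$ by construction, and $\|\btheta(f_\tau)\|_\infty\le B$ since each nonzero coordinate is unchanged; hence Lemma \ref{lem:lip} yields
\begin{equation*}
\|f-f_\tau\|_\infty \le (L+1)((N+1)B)^{L+1}\,\tau.
\end{equation*}

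\textbf{Gluing.} Given a minimal $\delta_0$-covering $\{g_1,\dots,g_M\}$ of $\cF_\rho^\dnn(L,N,B,F,S)$ in $\|\cdot\|_\infty$, I would pick $g_i$ with $\|f_\tau - g_i\|_\infty\le \delta_0$ and combine the two bounds via the triangle inequality to get $\|f-g_i\|_\infty\le \delta$. This proves that $\{g_1,\dots,g_M\}$ is also a $\delta$-cover of $\check{\cF}_{\rho,\tau}^\dnn(L,N,B,F,S)$, and so $\cN(\delta,\check{\cF}_{\rho,\tau}^\dnn(L,N,B,F,S),\|\cdot\|_\infty)\le M$, which is exactly the first claimed inequality.

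\textbf{Main obstacle.} The only delicate point is that thresholding parameters can slightly inflate the output sup-norm, so a priori one has $\|f_\tau\|_\infty \le F + \tau(L+1)((N+1)B)^{L+1}$ rather than $\|f_\tau\|_\infty\le F$. I would handle this either by post-composing $f_\tau$ with the $1$-Lipschitz truncation $T_F(t) := \mathrm{sign}(t)(|t|\wedge F)$, which keeps the sup-norm distance $\le \|f-f_\tau\|_\infty$ since $T_F(f)=f$, or, more directly, by observing that the covering-number bound in Proposition \ref{lem:entropy0} depends on $(L,N,B,S)$ but not on $F$, so enlarging the output bound to $F' := F + \tau(L+1)((N+1)B)^{L+1}$ leaves the right-hand side of the inequality unchanged. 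Either route closes the argument.
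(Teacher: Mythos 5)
Your proposal is correct and follows essentially the same route as the paper's proof: hard-threshold the parameters at level $\tau$, observe that $\|\btheta(f_\tau)\|_0\le\|\btheta(f)\|_{\clip,\tau}\le S$, control $\|f-f_\tau\|_\infty$ via \cref{lem:lip}, and transfer a $\delta_0$-cover of $\cF_\rho^\dnn(L,N,B,F,S)$ by the triangle inequality, finishing with \cref{lem:entropy0}. Your ``main obstacle'' paragraph in fact addresses a point the paper's proof silently skips --- it asserts $f^{(\tau)}\in\cF_\rho^\dnn(L,N,B,F,S)$ without verifying the output bound $\|f^{(\tau)}\|_\infty\le F$ --- and either of your fixes (truncation by $T_F$, or noting that the bound of \cref{lem:entropy0} is independent of $F$) legitimately closes that small gap.
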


\begin{proof}
For a DNN $f$ with parameter $\btheta(f)$, we let $f^{(\tau)}$ be the DNN constructed by the parameter which is the hard thresholding of $\btheta(f)$ with the threshold $\tau$, that is,  $\btheta(f^{(\tau)})=\btheta(f)\ind(|\btheta(f)|>\tau)$. Then by \cref{lem:lip},
    \begin{align*}
    \norm[0]{f-f^{(\tau)}}_\infty 
        &\le (L+1)((N+1)B)^{L+1}\norm[0]{\btheta(f)-\btheta(f^{(\tau)})}_\infty\\
        &\le \tau (L+1)((N+1)B)^{L+1}. 
    \end{align*}
Given $\delta>(L+1)((N+1)B)^{L+1}$, let $\delta^*:=\delta-(L+1)((N+1)B)^{L+1}>0$ and let $\{f_j^0: j\in [N_{\delta^*}]\}$ be the minimal $\delta^*$-covering set of $\cF_{\rho}^\dnn(L, N, B, F, S)$ with respect to the norm $\|\cdot\|_\infty$, where 
    \begin{equation*}
        N_{\delta^*}:=\cN(\delta^*,\cF_{\rho}^\dnn(L, N, B, F, S), \|\cdot\|_\infty).
    \end{equation*}
Since $\|\btheta(f^{(\tau)})\|_0=\|\btheta(f^{(\tau)})\|_{\mathsf{clip}, \tau}\le \|\btheta(f)\|_{\mathsf{clip}, \tau}\le S$, it follows that $f^{(\tau)}\in\cF_{\rho}^\dnn(L, N, B, F, S)$ for any $f\in \check{\cF}_{\rho, \tau}^\dnn(L, N, B, F, S)$. Hence for any $f\in \check{\cF}_{\rho, \tau}^\dnn(L, N, B, F, S)$, there is $j\in [N_{\delta^*}]$ such that $ \norm[0]{f^{(\tau)}-f_j^0}_\infty\le \delta^*$ and so
    \begin{align*}
        \norm[0]{f-f_j^0}_\infty
        &\le \norm[0]{f-f^{(\tau)}}_\infty +\norm[0]{f^{(\tau)}-f_j^0}_\infty \\
        &\le   \tau (L+1)((N+1)B)^{L+1}+ \delta^*=\delta,
    \end{align*}
which implies that $\{f_j^0: j\in [N_{\delta^*}]\}$ is also a $\delta$-covering set of $ \check{\cF}_{\rho, \tau}^\dnn(L, N, B, F, S)$.
By \cref{lem:entropy0}, the proof is done.
\end{proof}

\subsection{Proofs of \cref{thm:oracle_reg} and  \cref{thm:oracle_cls}}
\label{sec:proof_oracle}

Let $\P_n$ be the empirical distribution based on the data $(\X_1,Y_1),\dots, (\X_n, Y_n)$. We use the abbreviation  $\sQ f:=\int f\d \sQ$ for a measurable function $f$ and measure $\sQ$. Throughout this section, $\cF_n^\dnn:=\cF_\rho^{\dnn}(L_n, N_n, B_n, F)$ and $J_{\lambda_n, \tau_n}(\cdot):=\lambda_n\|\btheta(\cdot)\|_{\clip, \tau_n}$.

For the proofs of \cref{thm:oracle_reg} and  \cref{thm:oracle_cls}, we need the following large deviation bound for empirical processes. This is a slight modification of Theorem 19.3 of \cite{gyorfi2006distribution} that states the result with the covering number with respect to the empirical $L_2$ norm. Since the empirical $L_2$ norm is always less than the $L_\infty$ norm, the following lemma is a direct consequence of Theorem 19.3 of \cite{gyorfi2006distribution}.

\begin{lemma}[Theorem 19.3 of \cite{gyorfi2006distribution}]
\label{lem:emp}
Let $K_1\ge1$ and $K_2\ge1$. Let $\Z_1,\dots, \Z_n$ be independent and identically distributed random variables with values in $\cZ$ and let $\cG$ be a class of functions $g:\cZ\mapsto \R$ with the properties $\|g\|_\infty \le K_1$ and $\E g(\Z)^2\le K_2\E g(\Z)$. Let $\omega\in(0,1)$ and $t^*>0$. Assume that
    \begin{equation}
    \label{eq:lem_cond1}
        \sqrt{n}\omega\sqrt{1-\omega}\sqrt{t^*}\ge 288\max\{2K_1,\sqrt{2K_2}\}
    \end{equation}
and that any $\delta\ge t^*/8$, % for every $x_1,\dots, x_n\in\R^d$ and 
    \begin{equations}
     \label{eq:lem_cond2}
        \frac{\sqrt{n}\omega(1-\omega)\delta}{96\sqrt{2}\max\{K_1,2K_2\}}\ge
        \int_{\frac{\omega(1-\omega)\delta}{16\max\{K_1,2K_2\}}}^{\sqrt{\delta}} \sqrt{\log \cN\del{u, \cG, \|\cdot\|_{\infty}} }\d u.
        % &\ge \int_{\frac{\omega(1-\omega)\delta}{16\max\{K_1,2K_2\}}}^{\sqrt{\delta}}        \sqrt{\log \cN\del{u, \cbr{g\in\cG:\|g\|_{2, \sQ_n}^2\le 16\delta}, \|\cdot\|_{2, \sQ_n}} }\d u.
    \end{equations}
%where $\sQ_n:=\frac{1}{n}\sum_{i=1}^n\delta_{x_i}.$
Then
    \begin{equation*}
        \P\del{\sup_{g\in\cG}\frac{\abs{(\P-\P_n)g}}{t^*+\P g}\ge\omega}
        \le 60\exp\del{-\frac{nt^*\omega^2(1-\omega)}{128\cdot2304\max\{K_1^2, K_2\}}}.
    \end{equation*}
\end{lemma}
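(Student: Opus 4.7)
The plan is to derive this lemma as an immediate corollary of the original Theorem 19.3 of \cite{gyorfi2006distribution}, whose statement is identical to the one above except that its two quantitative conditions are phrased in terms of covering numbers with respect to the data-dependent empirical $L_2$ norm
\begin{equation*}
\|g\|_{2,\P_n} := \left(\frac{1}{n}\sum_{i=1}^n g(\Z_i)^2\right)^{1/2}
\end{equation*}
rather than the supremum norm $\|\cdot\|_\infty$. So the task reduces to checking that each hypothesis written with $\|\cdot\|_\infty$ implies the analogous hypothesis of the original theorem, after which the conclusion is inherited unchanged.

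The key ingredient is the deterministic sample-path inequality $\|g\|_{2,\P_n} \le \|g\|_\infty$, valid for every realization of $(\Z_1,\dots,\Z_n)$ and every bounded measurable $g$. This implies that any finite $\delta$-cover of $\cG$ in the supremum norm is, on the same sample path, also a $\delta$-cover in $\|\cdot\|_{2,\P_n}$. Minimising over covers gives the pointwise bound
\begin{equation*}
\cN(u, \cG, \|\cdot\|_{2,\P_n}) \;\le\; \cN(u, \cG, \|\cdot\|_\infty) \qquad \text{for every } u>0.
\end{equation*}

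Next, since the integrand of the Dudley-type entropy integral is monotone increasing in the covering number, the $L_\infty$ version of \labelcref{eq:lem_cond2} assumed here majorises (hence implies) the empirical-$L_2$ version required in \cite{gyorfi2006distribution}; condition \labelcref{eq:lem_cond1} does not involve any covering number and is unchanged between the two formulations. Consequently the hypotheses of Gy\"orfi et al.'s theorem are satisfied on every sample path in $(\Z_1,\dots,\Z_n)$, and the probability bound transfers verbatim.

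I do not anticipate any substantive obstacle: the work of this lemma is a bookkeeping reformulation that replaces a data-dependent covering number by the uniformly larger but data-free $L_\infty$ covering number. This trade is precisely what later makes the explicit $L_\infty$ covering estimates for sparse DNN classes (e.g., \cref{lem:entropy_cl1}) directly usable inside the oracle-inequality arguments for \cref{thm:oracle_reg,thm:oracle_cls}, where one cannot readily control empirical $L_2$ covering numbers of $\cF_n^\dnn$ without additional concentration steps.
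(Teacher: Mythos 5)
Your proposal is correct and matches the paper's own justification exactly: the paper likewise observes that the empirical $L_2$ norm is bounded by the sup norm, so the $L_\infty$ covering number dominates the empirical one and the lemma follows directly from Theorem 19.3 of Gy\"orfi et al.
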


\begin{proof}[Proof of \cref{thm:oracle_reg}]
Throughout the proof, when comparing two positive sequences $\{a_n\}_{n\in\bN}$ and $\{b_n\}_{n\in\bN}$, we write $a_n\lesssim_{\sigma,F^\star} b_n$ if there is a constant $C_{\sigma,F^\star}>0$ depending only on $\sigma$ and $F^\star$ such that $a_n\le C_{\sigma,F^\star} b_n$ for any $n\in\bN$.

Let $K_n:=(\sqrt{32\sigma^2}\log^{1/2} n)\vee F$. Let $Y^\dag := \text{sign}(Y)(|Y|\wedge K_n),$ which is a truncated version of $Y$ and $f^\dagger$ be the regression function of $Y^\dag$, that is,
    \begin{equation*}
        f^{\dagger}(\x):=\E(Y^\dagger|\X=\x).
    \end{equation*}
We suppress the dependency on $n$ in the notation $Y^\dag$ and $f^\dag$ for notational convenience.
We start with the decomposition
    \begin{align}
       &\norm[0]{\hat{f}_n-f^*}^2_{2, \P_{\X}}
        = \P(Y-\hat{f}_n(\X))^2-\P(Y-f^\star(\X))^2 =\sum_{i=1}^4A_{i,n},
    \end{align}
where
    \begin{align*}
        A_{1,n} &:= \sbr{\P(Y-\hat{f}_n(\X))^2-\P(Y-f^\star(\X))^2}  \\
        &\qquad -\sbr{\P(Y^\dagger-\hat{f}_n(\X))^2-\P(Y^\dagger-f^\dagger(\X))^2} \\
        A_{2,n} &:= \sbr{\P(Y^\dagger-\hat{f}_n(\X))^2-\P(Y^\dagger-f^\dagger(\X))^2} \\ 
        &\qquad -2\sbr{\P_n(Y^\dagger-\hat{f}_n(\X))^2-\P_n(Y^\dagger-f^\dagger(\X))^2} - 2J_{\lambda_n, \tau_n}(\hat{f}_n)\\
        A_{3,n} &:= 2\sbr{\P_n(Y^\dagger-\hat{f}(\X))^2-\P_n(Y^\dagger-f^\dagger(\X))^2}  \\
        &\qquad - 2\sbr{\P_n(Y-\hat{f}(\X))^2-\P_n(Y-f^\star(\X))^2}\\
        A_{4,n} &:= 2\sbr{\P_n(Y-\hat{f}(\X))^2-\P_n(Y-f^\star(\X))^2} + 2J_{\lambda_n,\tau_n}(\hat{f}_n).
    \end{align*}

To bound $A_{1,n}$, we first recall that well-known properties of sub-Gaussian variables such that the condition \labelcref{eq:subgauss} implies that $\P\epsilon=0$ and $\E\e^{\epsilon^2/(4\sigma^2)}\le \sqrt{2}$, e.g., see Theorem 2.6 of \cite{wainwright2019high}. Let
    \begin{align*}
        A_{1,1,n}&:= \P\del{(Y^\dag-Y)(2\hat{f}_n(\X)-Y-Y^\dag)},\\
        A_{1,2,n}&:= \P\cbr{\del{Y^\dag-f^\dag(\X))-Y+f^\star(\X)}
        \del{Y^\dag-f^\dag(\X)+Y-f^\star(\X)}}
    \end{align*}
so that $A_{1,n}=A_{1,1,n}+A_{1,2,n}$. We use the Cauchy-Schwarz inequality to get
    \begin{align*}
        |A_{1,1,n}|
        \le \sqrt{\P\del{Y^\dag-Y}^2}\sqrt{\P\del{2\hat{f}_n(\X)-Y-Y^\dag}^2}.
    \end{align*}
Since $\E\e^{Y^2/(8\sigma^2)}\le \e^{(F^\star)^2/(4\sigma^2)}\E\e^{\epsilon^2/(4\sigma^2)} \le  \sqrt{2}\e^{(F^\star)^2/(4\sigma^2)}$, we have that
     \begin{equations}
     \label{eq:bound_y_ydag}
       \P\del[0]{Y^\dag-Y}^2&=\P[|Y|^2\ind(|Y|>K_n)]\\
       &\le \P[16\sigma^2\e^{Y^2/(16\sigma^2)}\e^{Y^2/(16\sigma^2)-K_n^2/(16\sigma^2)}]\\
       &\le 16\sqrt{2}\sigma^2\e^{(F^\star)^2/(4\sigma^2)}\e^{-2\log n}
       =16\sqrt{2}\sigma^2\e^{(F^\star)^2/(4\sigma^2)}n^{-2},
    \end{equations}
and that
    \begin{equations}
    \label{eq:bound_y_f}
       \P\del{2\hat{f}_n(\X)-Y-Y^\dag}^2
       &\le 2\P(Y^2)+2\P(2\hat{f}_n(\X)-Y^\dag)^2 \\
       &\le 16\sigma^2\E\e^{Y^2/(8\sigma^2)}+18K_n^2 \\
       &\lesssim_{\sigma, F^\star}\log n.
    \end{equations}
Thus $|A_{1,1,n}|\lesssim_{\sigma, F^\star} \log n/n$. For $A_{1,2,n}$, using  the Cauchy-Schwarz inequality we have
    \begin{align*}
        |A_{1,2,n}|
        &\le \sqrt{2\P(Y^\dag-Y)^2+2\P(f^\dag(\X)-f^\star(\X))^2}\\
        &\quad \times\sqrt{\P(Y+Y^\dag-f^\dag(\X)-f^\star(\X))^2}.
    \end{align*}
Using the similar arguments as \labelcref{eq:bound_y_f}, we have $\P(Y+Y^\dag-f^\dag(\X)-f^\star(\X))^2\lesssim_{\sigma, F^\star}\log n$. Since $\P\epsilon=0$, by Jensen's inequality,
    \begin{align*}
        \P(f^\dag(\X)-f^\star(\X))^2
        =\P(\P(Y^\dag|\X)-\P(Y|\X))^2
        \le \P(Y^\dag-Y)^2.
    \end{align*}
Thus the inequality \labelcref{eq:bound_y_ydag} concludes that $|A_{1,2,n}|\lesssim_{\sigma,F^\star} \log n/n$.

The term $\E(A_{3,n})$ can be shown to be bounded above by $\log n/n$ up to a constant depending only on $\sigma$ and $F^\star$ similarly
to the derivations used for bounding $A_{1,n}$.

For $A_{2,n}$, define $\Delta(f)(\Z):=(Y^\dagger-f(\X))^2-(Y^\dagger-f^\dagger(\X))^2$ with $\Z:=(\X, Y)$ for $f\in\cF.$  For $t>0$, we can write
    \begin{align*}
        \P(A_{2,n}>t)
        &\le \P\del{\sup_{f\in\cF^\dnn_n}\frac{(\P-\P_n)\Delta(f)(\Z)}{t+ 2J_{\lambda_n, \tau_n}(f) + \P\Delta(f)(\Z)}\ge \frac{1}{2}}\\
        &\le \sum_{j=0}^\infty\P\del{\sup_{f\in\cF_{n,j,t}}\frac{(\P-\P_n)\Delta(f)(\Z)}{2^jt + \P\Delta(f)(\Z)}\ge \frac{1}{2}},
    \end{align*}
where we define
    \begin{equation*}
        \cF_{n,j, t}:=\cbr{f\in\cF^\dnn_n:2^{j-1}\ind(j\neq0)t\le J_{\lambda_n, \tau_n}(f)\le 2^{j}t}.
    \end{equation*}
We now apply \cref{lem:emp} to the class of functions
        \begin{equation*}
        \cG_{n,j, t}:=\cbr{\Delta(f):[0,1]^d\times\R\mapsto\R :f\in \cF_{n,j, t}}.
    \end{equation*}
We will check the conditions of \cref{lem:emp}. First for sufficiently large $n$, we have that for every $g\in\cG_{n,j,t}$
with $\norm{g}_\infty\le 8K_n^2,$
    \begin{align*}
        \P(g(\Z))^2&=\P(Y^\dagger-f(\X)-(Y^\dagger-f^\dagger(\X)))^2(Y^\dagger-f(\X)+(Y^\dagger-f^\dagger(\X)))^2\\
        &\le4(K_n+F)^2\P(f(\X)-f^\dagger(\X))^2 \\
        &\le 16K_n^2\P g(\Z).
    \end{align*}
Thus, the condition \labelcref{eq:lem_cond1} holds for any sufficiently large $n$ if $t\gtrsim \log^2 n/n$. For the condition \labelcref{eq:lem_cond2}, we observe that 
   \begin{align*}
        &\abs{(y^\dagger-f_1(\x))^2-(y^\dagger-f^\dagger(\x))^2-\cbr{(y^\dagger-f_2(\x))^2-(y^\dagger-f^\dagger(\x))^2}}\\
        &\le |f_1(\x)-f_2(\x)||f_1(\x)+f_2(\x)-2y^\dag| \\
        &\le 4K_n|f_1(\x)-f_2(\x)|
    \end{align*}
for any $f_1, f_2\in\cF_{n,j,t}$ and $(\x,y)\in[0,1]^d\times\R$ and so we have	
    \begin{align*}
       \cN\del{u, \cG_{n,j,t}, \|\cdot\|_\infty}\le \cN\del{u/(4K_n), \cF_{n,j,t}, \|\cdot\|_\infty}.
  \end{align*}
Let $\zeta_n:=(L_n+1)((N_n+1)B_n)^{L_n+1}$. With $\omega=1/2$, by \cref{lem:entropy_cl1} and the assumption that $-\log \tau_n \ge A\log^{2}n$ for sufficiently large $A>0$ which implies $\tau_n\zeta_n\lesssim n^{-1}$, we have that for any $\delta\ge  n^{-1}\log^2n\gtrsim 2^{13}K_n^3\tau_n\zeta_n$,
    \begin{equations}
    \label{eq:ent_bound}
        &\int^{\sqrt{\delta}}_{\delta/(2^{11}K_n^2)} \log^{1/2} \cN\del{\frac{u}{4K_n}, \cF_{n,j,t}, \|\cdot\|_\infty}\d u \\
        &\le \sqrt{\delta} \log^{1/2} \cN\del{\frac{\delta}{2^{13}K_n^3}, \cF_{n,j,t}, \|\cdot\|_\infty}\\
        &\le \sqrt{\delta} \log^{1/2}\cN\del{\frac{\delta}{2^{13}K_n^3}-\tau_n\zeta_n, \cF_{\rho}^{\dnn}(L_n, N_n, B_n, F, 2^jt/\lambda_n), \|\cdot\|_\infty}\\
        &\le 2\sqrt{\delta}(2^jt/\lambda_n)^{1/2} (L_n+1)^{1/2} \log^{1/2} \del{\frac{(L_n+1)(N_n+1)B_n}{\delta/(2^{13}K_n^3)-\tau_n\zeta_n}} \\
        &\le c_{1}\sqrt{\delta}\sqrt{2^jt}\frac{\sqrt{n}}{\log^{3/2}n}
    \end{equations}
for any  sufficiently large $n$ for some constant $c_{1}>0$. Note that the constant $c_{1}$ does not depends on $j$. Then for any $t\ge t_n:=8\log^2n/n$ and any $\delta\ge 2^jt/8\ge\log^2n/n$, there exists an universal constant $c_{2}>0$ such that
\begin{equation}
    \label{eq:delta_cond1}
        \frac{\delta}{\log  n}\ge c_{2}\frac{\sqrt{\delta}\sqrt{2^jt}}{\log^{3/2}n}, %\quad \forall \delta \ge \frac{2^jt}{8}\ge c_{21}\frac{\log^2n}{n},
    \end{equation}
for any $j=0,1,\dots$, and thus the  condition  \labelcref{eq:lem_cond2} is met for any $j=0,1,\dots$ and all sufficiently large $n$.  Therefore we have
    \begin{align*}
    \P(A_{2,n}>t) \lesssim\sum_{j=0}^\infty\exp\del{-c_{3}2^j\frac{nt}{\log n}}
        \lesssim \exp\del{-c_{3}\frac{nt}{\log n}}
    \end{align*}
for $t\ge t_n$, which implies
    \begin{align*}
        \E(A_{2,n})& \le 2t_n + \int_{2t_n}^\infty\P(A_{2,n}>t)\d t\\
        &\lesssim \frac{\log^2n}{n} +\frac{\log n}{n}\e^{-c_{4}\log n} \lesssim \frac{\log^2n}{n}
    \end{align*}
for some positive constants $c_{3}$ and $c_{4}$.

For $A_{4,n}$, we choose a neural network function $f_n^\circ\in\cF^\dnn_n$ such that
    $$\norm{f_n^\circ-f^\star}_{2, \P_\X}^2+ J_{\lambda_n, \tau_n}(f_n^\circ)\le \inf_{f\in\cF^\dnn_n}\sbr{\norm{f-f^\star}_{2,\P_\X}^2 + J_{\lambda_n, \tau_n}(f)} +n^{-1}.$$
Then by the basic inequality $\P_n(Y-\hat{f}_n)^2+J_{\lambda_n,\tau_n}(\hat{f}_n) \le \P_n(Y-f)^2+J_{\lambda_n,\tau_n}(f)$ for any $f\in\cF^\dnn_n$, we have
    \begin{align*}
        A_{4,n} &\le 2\sbr{\P_n(Y-\hat{f}_n(\X))^2-\P_n(Y-f_n^\circ(\X))^2}+ 2J_{\lambda_n,\tau_n}(\hat{f}_n)  \\ &\quad 
        + 2\sbr{\P_n(Y-f_n^\circ(\X))^2-\P_n(Y-f^\star(\X))^2} \\
        &\le 2J_{\lambda_n,\tau_n}(f_n^\circ)+2\sbr{\P_n(Y-f_n^\circ(\X))^2-\P_n(Y-f^\star(\X))^2}
    \end{align*}
and so
    \begin{align*}
       \E(A_{4,n}) &\le 2J_{\lambda_n,\tau_n}(f_n^\circ)+2\norm{f_n^\circ-f^\star}^2_{2,\P_\X}\\
       &\le 2\inf_{f\in\cF^\dnn_n}\sbr{\norm{f-f^\star}^2_{2, \P_\X} + J_{\lambda_n, \tau_n}(f)} +\frac{1}{n}.
    \end{align*}
    
Combining all the bounds we have derived, we get the desired result.
\end{proof}

\begin{proof}[Proof of \cref{thm:oracle_cls}]
%$\cF_n^\dnn$ is uniformly bounded and 
Since $\ell$ is continuously differentiable, $\ell$ is Lipschitz on any closed interval. That is, there is a constant $c_1>0$ such that
    \begin{equation}
    \label{eq:losscond1}
        |\ell(z_1)-\ell(z_2)|\le c_1|z_1-z_2|
    \end{equation}
for any $z_1,z_2\in [-F,F]$. On the other hand,  since $F\ge F^\star$, there is a constant $c_2>0$ such that
    \begin{equation}
    \label{eq:losscond2}
        \E\left\{\ell(Yf(\X))-\ell(Yf_\ell^\star(\X))\right\}^2\le c_2\E\left\{\ell(Yf(\X))-\ell(Yf_\ell^\star(\X))\right\}
    \end{equation}
for any  $f\in\{f\in\cF:\|f\|_\infty\le F\}$. This is a well known fact about the strictly convex losses and the proof can be found in Lemma 6.1 of \cite{park2009convergence}.

We decompose $ \cE_{\P}(\hat{f}_n)$ as 
    \begin{align*}
        \cE_{\P}(\hat{f}_n)
        =\P\ell(Y\hat{f}_n(\X)) - \P\ell(Yf_\ell^\star(\X))=B_{1,n}+B_{2,n},
    \end{align*}
where
    \begin{align*}
        B_{1,n}&:=\sbr{\P\ell(Y\hat{f}_n(\X)) - \P\ell(Yf^\star(\X))} \\
        &\quad - 2\sbr{\P_n\ell(Y\hat{f}_n(\X))-\P_n\ell(Yf^\star(\X))}- 2J_{\lambda_n, \tau_n}(\hat{f}_n)\\
        B_{2,n}&:=2\sbr{\P_n\ell(Y\hat{f}_n(\X)) - \P_n\ell(Yf^\star(\X))} + 2J_{\lambda_n, \tau_n}(\hat{f}_n).
    \end{align*}
    
We bound $B_{1,n}$ by using a similar argument for bounding $A_{2,n}$
in the proof of \cref{thm:oracle_reg}. Let  $\Delta(f)(\Z):=\ell(Yf(\X))-\ell(Yf^\star(\X))$ with $\Z:=(\X, Y)$ and let 
    \begin{equation*}
        \cF_{n,j, t}:=\cbr{f\in\cF^\dnn_n:2^{j-1}\ind(j\neq0)t\le J_{\lambda_n, \tau_n}(f)\le 2^{j}t}.
    \end{equation*}
Then for $t>0$, we can write
    \begin{align*}
        \P(B_{1,n}>t)
        &\le \P\del{\sup_{f\in\cF^\dnn_n}\frac{(\P-\P_n)\Delta(f)(\Z)}{t+ 2J_{\lambda_n, \tau_n}(f) + \P\Delta(f)(\Z)}\ge \frac{1}{2}}\\
        &\le \sum_{j=0}^\infty\P\del{\sup_{f\in\cF_{n,j,t}}\frac{(\P-\P_n)\Delta(f)(\Z)}{2^jt + \P\Delta(f)(\Z)}\ge \frac{1}{2}}.
    \end{align*}
We now apply \cref{lem:emp} to the class of functions
        \begin{equation*}
        \cG_{n,j, t}:=\cbr{\Delta(f):[0,1]^d\times\{-1,1\}\mapsto\R :f\in \cF_{n,j, t}},
    \end{equation*}
By \labelcref{eq:losscond1} and \labelcref{eq:losscond2}, we can set $K_1=2c_1F$ and $K_2=c_2$ in  \cref{lem:emp}. The condition \labelcref{eq:lem_cond1} holds for any sufficiently large $n$ if $t\gtrsim \log n/n$. For the condition \labelcref{eq:lem_cond2}, we let $K':=K_1\vee 2K_2= (2c_1F)\vee 2c_2$ for notational simplicity. Further, let $\zeta_n:=(L_n+1)((N_n+1)B_n)^{L_n+1}.$ Then since $\ell$ is locally Lipschitz, using a similar argument to that used for \labelcref{eq:ent_bound} in the proof of \cref{thm:oracle_reg}, we can show that, for any $\delta\ge n^{-1}\log n\gtrsim 4c_2K'\tau_n\zeta_n $,     
 \begin{align*}
        &\int^{\sqrt{\delta}}_{\delta/(4K')} \log^{1/2} \cN\del{u, \cG_{n,j,t}, \|\cdot\|_\infty}\d u \\
        &\le \int^{\sqrt{\delta}}_{\delta/(4K')} \log^{1/2}\cN\del{u/c_2, \cG_{n,j,t}, \|\cdot\|_\infty}\d u \\
        &\le \sqrt{\delta}(2^jt/\lambda_n)^{1/2} L_n^{1/2} \log^{1/2} \del{\frac{(L_n+1)(N_n+1)B_n}{\delta/(4c_2K')-\tau_n\zeta_n}} \\
        &\le c_{3}\sqrt{\delta}\sqrt{2^jt}\frac{\sqrt{n}}{\log^{1/2}n}
    \end{align*}
for all sufficiently large $n$ for some constant $c_{3}>0$. For any $t\ge t_n:=8n^{-1}\log n$ and any $\delta\ge 2^jt/8\ge n^{-1}\log n$, there exists a constant $c_{4}>0$ such that $\delta\ge c_{4} \sqrt{\delta}\sqrt{2^jt}/\log^{1/2}n$ for any $j=0,1,\dots,$. Thus condition  \labelcref{eq:lem_cond2} is met and then we have
        \begin{align*}
        \E(B_{1,n})& \le 2t_n + \int_{2t_n}^\infty\P(B_{1,n}>t)\d t\\
        &\le 2t_n + \int_{2t_n}^\infty\exp(-c_{5}nt)\d t\\
        &\lesssim \frac{\log n}{n} +\frac{1}{n}\e^{-c_{6}\log n} \\
        &\lesssim \frac{\log n}{n}
    \end{align*}
for some positive constants $c_{5}$ and $c_{6}$.

For $B_{2,n}$, we choose a neural network function $f_n^\circ\in\cF^\dnn_n$ such that
    $$\cE_\P(f_n^\circ)+ J_{\lambda_n, \tau_n}(f_n^\circ)\le \inf_{f\in\cF^\dnn_n}\sbr{\cE_\P(f) + J_{\lambda_n, \tau_n}(f)} +n^{-1}.$$
Then by the basic inequality $\P_n\ell(Y\hat{f}_n(\X))+J_{\lambda_n,\tau_n}(\hat{f}_n) \le \P_n\ell(Yf(\X))+J_{\lambda_n,\tau_n}(f)$ for any $f\in\cF^\dnn_n$, we have
    \begin{align*}
        B_{2,n} &\le 2\sbr{\P_n\ell(Y\hat{f}_n(\X))-\P_n\ell(Yf_n^\circ(\X))}+ 2J_{\lambda_n,\tau_n}(\hat{f}_n)  \\ &\quad 
        + 2\sbr{\P_n\ell(Y f_n^\circ(\X))-\P_n\ell(Yf^\star(\X))} \\
        &\le 2J_{\lambda_n,\tau_n}(f_n^\circ)+2\sbr{\P_n\ell(Y f_n^\circ(\X))-\P_n\ell(Y, f^\star(\X))}
    \end{align*}
and so
    \begin{align*}
       \E(B_{2,n}) &\le 2J_{\lambda_n,\tau_n}(f_n^\circ)+2\cE_\P(f_n^\circ)\le 2\inf_{f\in\cF^\dnn_n}\sbr{\cE_\P(f) + J_{\lambda_n, \tau_n}(f)} +\frac{1}{n}.
    \end{align*}
    
Combining all the bounds we have derived, we get the desired result.
\end{proof}

\subsection{Proofs of \cref{cor:conv_reg} and  \cref{cor:conv_cls}}
\label{sec:proof_cor}

\begin{proof}[Proof of \cref{cor:conv_reg}]
Let $\epsilon_n=n^{-\frac{1}{\kappa+2}}$.  By \cref{thm:oracle_reg}, the assumption \labelcref{eq:approx_reg}, and the fact that $ \|\btheta(f)\|_{\clip, \tau}\le \|\btheta(f)\|_0$ for any $\tau>0$, we have that for any $f^\star\in\cF^\star$,
    \begin{align*}
         &\E\sbr{\norm[0]{\hat{f}_n-f^\star}_{2,\P_\X}^2}\\
         &\lesssim \inf_{f\in\cF^\dnn_\rho(L_n, N_n, B_n, F,  C\epsilon_n^{-\kappa}\log^rn)}\cbr{ \norm{f-f^\star}_{2,\P_\X}^2+\lambda_n\|\btheta(f)\|_{\clip, \tau_n}} \vee \frac{\log^2n}{n} \\
         &\lesssim \sbr{\inf_{f\in\cF^\dnn_\rho(L_n, N_n, B_n, F,  C\epsilon_n^{-\kappa}\log^rn)} \norm{f-f^\star}_{2,\P_\X}^2 + \lambda_n\epsilon_n^{-\kappa}\log^rn} \vee \frac{\log^2n}{n} \\
         &\lesssim \sbr{\epsilon_n^2+n^{\frac{\kappa}{\kappa+2}}\frac{\log^{5+r}n}{n} }\vee \frac{\log^2n}{n} \\
         &\lesssim n^{-\frac{2}{\kappa+2}}\log^{5+r}n
    \end{align*}
which concludes the desired result.
\end{proof}

\begin{proof}[Proof of \cref{cor:conv_cls}]
For $\x\in[0,1]^d$, define the function $\psi_\x:\R\mapsto\R_+$ by $\psi_\x(z):=\eta(\x)\ell(z)-(1-\eta(\x))\ell(-z).$ Note that $z_{\x}^\star:=f_\ell^\star(\x)$ is the minimizer of $\psi_\x(z)$ and satisfies $\psi_\x'(z_\x^\star)=0$ $\P_\X$-a.s.. Then by the Taylor expansion around $z_{\x}^\star:=f_\ell^\star(\x),$
we have
    \begin{equation*}
        \psi_\x(z)-\psi_\x(z_{\x}^\star)=\frac{\psi''(\tilde{z})}{2}(z-z_{\x}^\star)^2
    \end{equation*}
$\P_\X$-a.s., where $\tilde{z}$ lies between $z$ and $z_\x^\star$. Since $\ell$ has a continuous second derivative, we have $\|\psi''\|_{\infty, [-F,F]}\le c_{1}$ for some $c_{1}>0$, which implies that
    \begin{equation*}
        \cE_\P(f)\le c_{1}\|f-f_\ell^\star\|_{2, \P_\X}^2.
    \end{equation*}

Let $\epsilon_n=n^{-\frac{1}{\kappa+2}}$.  By \cref{thm:oracle_cls}, the condition \labelcref{eq:approx_cls}, and the fact that $ \|\btheta(f)\|_{\clip, \tau}\le \|\btheta(f)\|_0$ for any $\tau>0$, we have that for any $f_\ell^\star\in\cF^\star$,
    \begin{align*}
         \E\sbr{\cE_\P(\hat{f}_n)}
          &\lesssim \inf_{f\in\cF^\dnn_\rho(L_n, N_n, B_n, F,  C\epsilon_n^{-\kappa}\log^rn)}\cbr{ \cE_\P(f)+\lambda_n\|\btheta(f)\|_{\clip, \tau_n}} \vee \frac{\log^2n}{n} \\
         &\lesssim \sbr{\inf_{f\in\cF^\dnn_\rho(L_n, N_n, B_n, F,  C\epsilon_n^{-\kappa}\log^rn)} \norm{f-f_\ell^\star}_{2,\P_\X}^2 + \lambda_n\epsilon_n^{-\kappa}\log^rn }\vee \frac{\log^2 n}{n} \\
         &\lesssim n^{-\frac{2}{\kappa+2}}\log^{3+r}n
    \end{align*}
which concludes the desired result.
\end{proof}

\section{Function approximation by a DNN with general activation functions}
\label{sec:activation}

\subsection{Examples of activation functions}
\label{sec:activation_examples}

Examples of piecewise linear activation functions are
    \begin{itemize}
        \item ReLU $z\mapsto \max\{z,0\}$ 
        \item Leaky ReLU $z\mapsto \max\{z,az\}$ for $a\in(0,1),$
    \end{itemize}
and examples of locally quadratic activation functions are 
    \begin{itemize}
        \item Sigmoid: $\displaystyle z\mapsto 1/({1+\e^{-z}}).$ 
        \item Tangent hyperbolic: $\displaystyle  z\mapsto\frac{\e^z-\e^{-z}}{\e^z+\e^{-z}}.$
        \item Inverse square root unit (ISRU) \citep{carlile2017improving}: 
            $\displaystyle  z\mapsto\frac{z}{\sqrt{1+az^2}}$ for $a>0$. 
        \item Soft clipping \citep{klimek2018neural}: 
            $\displaystyle  z\mapsto\frac{1}{a}\log \del{\frac{1+\e^{az}}{1+\e^{a(z-1)}}}$ for $a>0$.  \medskip
        \item SoftPlus \citep{glorot2011deep}: $ z\mapsto\log (1+\e^z)$.  
        \item Swish \citep{ramachandran2017searching}: $\displaystyle  z\mapsto\frac{z}{1+\e^{-z}}$.
        \item Exponential linear unit (ELU) \citep{clevert2015fast}: 
            $\displaystyle  z\mapsto a(\e^z-1)\ind(z\le0)+ z\ind(z>0)$ for $a>0$.
		\item Inverse square root linear unit  (ISRLU) \citep{carlile2017improving}: 
            $\displaystyle   z\mapsto\frac{z}{\sqrt{1+az^2}}\ind({z\le0})+z\ind({z>0})$ for $a>0$.
        \item Softsign \citep{bergstra2009quadratic}: 
            $\displaystyle  z\mapsto \frac{z}{1+|z|}.$  
    \end{itemize}

\subsection{Approximation of H\"older smooth functions}

The next theorem present the result about the approximation of H\"older smooth function by a DNN with the activation function being either piecewise linear or locally quadratic.

\begin{theorem}
\label{thm:holder_approx}
Let $f^\star\in\cH^{\alpha, R}([0,1]^d)$. Then there exist positive constants $L_0$, $N_0$, $S_0$, $B_0$ and $F_0$ depending only on $d$, $\alpha$, $R$ and $\rho(\cdot)$ such that, for any $\epsilon>0$, there is a neural network 
    \begin{equation*}
        f\in  \cF^\dnn_\rho\del{L_0\log(1/\epsilon),N_0\epsilon^{-d/\alpha}, B_0, F_0, S_0\epsilon^{-d/\alpha}\log(1/\epsilon)}
    \end{equation*}
    for a piecewsie linear $\rho$ and
\begin{equation*}
        f\in  \cF^\dnn_\rho\del{L_0\log(1/\epsilon),N_0\epsilon^{-d/\alpha}, B_0\epsilon^{-4(d/\alpha+1)}, F_0, S_0\epsilon^{-d/\alpha}\log(1/\epsilon)}
    \end{equation*}
      for a locally quadratic $\rho$
satisfying
    \begin{equation*}
        \|f^\star-f\|_\infty\le \epsilon.
    \end{equation*}
\end{theorem}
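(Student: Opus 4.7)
The plan is to build the DNN by combining a local Taylor approximation of $f^\star$ with DNN gadgets that implement scalar multiplication and localizing bumps, in the style of Yarotsky's construction. The real difficulty is the locally quadratic case, where the only way to realize $x\mapsto x^2$ is a second-order finite difference of $\rho$ around a point where $\rho''\ne 0$, and this forces the weights to grow polynomially in $1/\epsilon$.

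First I would partition $[0,1]^d$ into cubes $\{C_\xi\}$ of side length $h\asymp \epsilon^{1/\alpha}$ indexed by grid points $\xi$, and on each cube replace $f^\star$ by its Taylor polynomial
\[
P_\xi(\x)=\sum_{\|\m\|_1\le\lfloor\alpha\rfloor}\frac{\partial^{\m}f^\star(\xi)}{\m!}(\x-\xi)^{\m},
\]
whose pointwise error is $O(h^\alpha)=O(\epsilon)$ by the H\"older condition. A smooth partition of unity $\{\phi_\xi\}$ subordinate to the cubes then gives the global piecewise-polynomial approximator $\tilde f(\x)=\sum_\xi \phi_\xi(\x) P_\xi(\x)$, uniformly within $O(\epsilon)$ of $f^\star$; the number of active cubes at any point is $O(1)$ and the total is $O(\epsilon^{-d/\alpha})$.

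Next I would realize $\tilde f$ by a DNN. The key gadget is an approximate multiplier $M_\delta:(x,y)\mapsto xy$ with sup error $\delta$ on bounded inputs; since $xy=\tfrac14((x+y)^2-(x-y)^2)$, it suffices to approximate $x\mapsto x^2$. For piecewise linear $\rho$, one first verifies that ReLU can be written as a finite linear combination of shifted copies of $\rho$ (using that $\rho$ has only finitely many break points and is linear between them), so that Yarotsky's depth-$O(\log(1/\delta))$, bounded-weight approximation of $x^2$ transfers directly. For locally quadratic $\rho$, fix $t_0\in(a,b)$ with $\rho''(t_0)\neq 0$ and use
\[
\frac{\rho(t_0+hx)+\rho(t_0-hx)-2\rho(t_0)}{h^2\rho''(t_0)}=x^2+O(h)
\]
on bounded $x$, so that $h\asymp \delta$ yields accuracy $\delta$ at the price of weights of order $1/h$. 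The monomials $(\x-\xi)^{\m}$ for $\|\m\|_1\le\lfloor\alpha\rfloor$ and the bumps $\phi_\xi$ are then assembled from cascaded multipliers in depth $O(\log(1/\epsilon))$, and a final linear layer aggregates $\tilde f$.

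A direct resource count yields depth $O(\log(1/\epsilon))$, width $O(\epsilon^{-d/\alpha})$, and $O(\epsilon^{-d/\alpha}\log(1/\epsilon))$ active parameters, matching $L_0, N_0, S_0$; the weights stay bounded in the piecewise linear case. The main obstacle, and the source of the factor $\epsilon^{-4(d/\alpha+1)}$ in the locally quadratic case, is bookkeeping how the $1/h$ weight blow-up compounds as multipliers are chained to form monomials of degree up to $\lfloor\alpha\rfloor$ and to build bumps, together with how the amplified per-gadget errors propagate through this cascade to $\tilde f$. Choosing the per-gadget tolerance $\delta$ polynomially small in $\epsilon$ to absorb the compounded errors, and then multiplying the resulting $1/h$ factors along the network's depth, produces the stated $B_0\epsilon^{-4(d/\alpha+1)}$ weight bound. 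A minor side issue for piecewise linear $\rho$ is the explicit ReLU decomposition through its break points, which is routine.
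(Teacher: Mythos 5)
Your construction follows essentially the same route as the paper's source for this result: the paper gives no proof of its own here but simply cites Theorem~1 of Ohn and Kim (2019), whose argument is exactly the Yarotsky-style scheme you describe (local Taylor polynomials on a grid of mesh $h\asymp\epsilon^{1/\alpha}$, a partition of unity, approximate multiplication via squaring, and for locally quadratic $\rho$ the second-order difference quotient of $\rho$ whose step size forces weights of polynomial size in $1/\epsilon$). The one place your sketch falls short of a proof is the locally quadratic weight bound: you assert that the compounded $1/h$ factors and error propagation "produce the stated $B_0\epsilon^{-4(d/\alpha+1)}$" without actually carrying out the count, so the specific exponent $4(d/\alpha+1)$ --- the only quantitatively delicate claim in the statement --- is assumed rather than derived; completing that bookkeeping (per-gadget tolerance, number of cascaded multipliers, and the resulting weight magnitudes) is what the cited reference actually supplies.
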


\begin{proof}
See Theorem 1 of \cite{ohn2019smooth}.
\end{proof}

\subsection{Approximation of composition structured functions}

Recall the class of composition structure functions $\cG^{\textsc{comp}}(q, \balpha,  \mathbf{d}, \t, R)$  given in \labelcref{eq:compose}. For the proof of the  approximation result of a composition structured function by a DNN with the activation function being either piecewise linear of locally quadratic, we need following lemma.

\begin{lemma}[Lemma 3 of \cite{schmidt2020nonparametric}]
\label{eq:composite_ineq}
Let $f^\star:=g_q^\star\circ\dots\circ g_1^\star\in\cG^{\textsc{comp}}(q, \balpha,  \mathbf{d}, \t, R)$. Then for any $g_j\equiv(g_{jk})_{k\in[d_{j+1}]}$ with $g_{jk}$ being a real-valued function for $j\in[q]$, we have
    \begin{equation*}
        \norm{g_q^\star\circ \cdots\circ g_1^\star-g_q\circ \cdots\circ g_1}_{\infty}\le R(2R)^{\sum_{j=1}^{q-1}\alpha_{j+1}}\sum_{j=1}^q\max_{1\le k\le d_{j+1}}\norm{g_{jk}^\star-g_{jk}}_\infty^{\prod_{h=j+1}^q \alpha_h\wedge 1}.
    \end{equation*}
\end{lemma}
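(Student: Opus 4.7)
The plan is to write the global difference as a telescoping sum over the $q$ ``layers'' and then peel off the outer compositions one at a time, picking up a Hölder constant at each peel. Concretely, for notational ease write $G_j^\star := g_j^\star\circ\cdots\circ g_1^\star$ and $G_j := g_j\circ\cdots\circ g_1$, with the convention $G_0^\star=G_0=\mathrm{id}$. Then
\begin{equation*}
g_q^\star\circ\cdots\circ g_1^\star - g_q\circ\cdots\circ g_1
= \sum_{j=1}^{q}\Bigl[\,g_q^\star\circ\cdots\circ g_{j+1}^\star\circ g_j^\star\circ G_{j-1}
- g_q^\star\circ\cdots\circ g_{j+1}^\star\circ g_j\circ G_{j-1}\,\Bigr],
\end{equation*}
so it suffices to bound each telescoping term in sup-norm and sum.

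The key mechanism is that a coordinate $g_{hk}^\star\in\cH^{\alpha_h,R}$ satisfies
$|g_{hk}^\star(\mathbf{u}) - g_{hk}^\star(\mathbf{v})| \le R\,\|\mathbf{u}-\mathbf{v}\|_\infty^{\alpha_h\wedge 1}$
on any set whose diameter in $\ell_\infty$ is at most $1$; when the diameter $D$ can exceed $1$, one reduces to this case by writing $D^{\alpha_h}\le (D\vee 1)^{\alpha_h}$, and since the range of each $g_h^\star$ lives in $[a_{h+1},b_{h+1}]^{d_{h+1}}$ with $|a_{h+1}|\vee|b_{h+1}|\le R$, the relevant diameter is at most $2R$. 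Applying this to the outermost function $g_q^\star$ on the telescoping term indexed by $j$ gives a bound of the form
\begin{equation*}
R\,(2R)^{\alpha_q\wedge 1\,\cdots}\cdot\bigl\|g_{q-1}^\star\circ\cdots\circ g_{j+1}^\star\circ g_j^\star\circ G_{j-1}
- g_{q-1}^\star\circ\cdots\circ g_{j+1}^\star\circ g_j\circ G_{j-1}\bigr\|_\infty^{\alpha_q\wedge 1},
\end{equation*}
and iterating this peeling step through $g_{q-1}^\star,\dots,g_{j+1}^\star$ multiplies exponents: after all $q-j$ outer functions are stripped off, the exponent attached to $\|g_j^\star\circ G_{j-1} - g_j\circ G_{j-1}\|_\infty$ becomes $\prod_{h=j+1}^{q}(\alpha_h\wedge 1)$, which is exactly the exponent appearing in the target inequality. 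Finally $\|g_j^\star\circ G_{j-1} - g_j\circ G_{j-1}\|_\infty \le \max_{k\in[d_{j+1}]}\|g_{jk}^\star - g_{jk}\|_\infty$.

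The one piece that needs care is bookkeeping of the constants: each peel contributes a factor of at most $R\cdot(2R)^{\alpha_{h}-1}\cdot(\text{something}\le 1)$ once the exponent becomes $\alpha_h\wedge 1$ instead of $\alpha_h$, and collecting these over $h=j+1,\dots,q$ and then summing over $j$ needs to produce the factor $R(2R)^{\sum_{j=1}^{q-1}\alpha_{j+1}}=R(2R)^{\sum_{h=2}^{q}\alpha_h}$ uniformly in $j$. This is the main (though purely arithmetic) obstacle: one has to verify that the worst term in the telescope saturates this constant, and that the $j$-dependent contributions can all be absorbed into the single prefactor $R(2R)^{\sum_{h=2}^{q}\alpha_h}$ by using $2R\ge 1$ and the trivial bound $(2R)^{\alpha_h\wedge 1}\le (2R)^{\alpha_h}$ at every peel where the compositional diameter dominates. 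Once this accounting is done, summing the $q$ telescoping bounds yields exactly the stated inequality.
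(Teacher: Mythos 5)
The paper offers no proof of this lemma at all---it is quoted verbatim as Lemma~3 of \citet{schmidt2020nonparametric}---and your telescoping decomposition followed by iterated H\"older peeling is precisely the unrolled form of the induction on $q$ used in that reference, so the approach is correct and essentially the standard one. The one point needing care is that the peeling step applies the H\"older modulus of $g_h^\star$ at the point $g_{h-1}\circ\cdots\circ g_j\circ G_{j-1}(\x)$, i.e.\ at an image under the \emph{approximants}, so the argument (and the claim that the relevant diameter is at most $2R$) requires the $g_{ik}$ to map into, or be projected onto, the cubes $[a_{i+1},b_{i+1}]^{d_{i+1}}$; this range restriction appears in Schmidt-Hieber's original statement but is dropped in the transcription here, and your appeal to ``the range of each $g_h^\star$'' should really be to the range of the approximating maps.
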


The following lemma is used to prove the approximation result with locally quadratic activation function.

\begin{lemma}
\label{lem:identity_approx}
Let the activation function $\rho(\cdot)$ be locally quadratic. Let $\delta\ge0$. Then for any $\epsilon>0$, there exists a DNN $f_{\textup{id},\delta}\in\cF_\rho^\dnn(1, 3, C_1(1+\delta)^2\epsilon^{-1}, 1+\delta+\epsilon)$ such that 
    \begin{equation*}
        \sup_{\x\in[-\delta,1+\delta]}|f_{\textup{id}, \delta}(\x)-\x|\le \epsilon
    \end{equation*}
for some constants $C_1>0$ depending only on $\rho(\cdot)$
\end{lemma}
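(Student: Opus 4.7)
The plan is to exploit the local-quadratic property of $\rho$: near a point $t \in (a,b)$ with $\rho'(t) \neq 0$, Taylor's theorem gives $\rho(t+z) = \rho(t) + z\,\rho'(t) + O(z^2)$, so evaluating $\rho$ at $t + hx$ for small scaling $h > 0$ and then linearly rescaling the output recovers $x$ with an error of order $h x^2$. Choosing $h$ proportional to $\epsilon/(1+\delta)^2$ forces the error to be at most $\epsilon$ on $[-\delta, 1+\delta]$, and the resulting parameter magnitudes are of order $(1+\delta)^2/\epsilon$.

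Concretely, fix $t \in (a,b)$ with $\rho'(t) \neq 0$ (guaranteed by the definition of locally quadratic), let $\mu := \min(t-a,\, b-t)/2$, and let $M := \sup_{z \in [t-\mu, t+\mu]} |\rho''(z)|$, which is finite by the boundedness of derivatives of $\rho$ on $(a,b)$. For the non-trivial regime $\epsilon \le 1+\delta$ (otherwise the zero function already satisfies the claim), set
\[
h := \min\!\left\{\frac{\mu}{1+\delta},\ \frac{2|\rho'(t)|\,\epsilon}{M(1+\delta)^2}\right\},
\]
and define
\[
f_{\textup{id},\delta}(x) := \frac{1}{h\,\rho'(t)}\bigl[\rho(hx + t) - \rho(t)\bigr].
\]
This is a one-hidden-layer network with a single active unit $\rho(hx + t)$ having input weight $h$ and hidden bias $t$, combined by output weight $1/(h\rho'(t))$ and output bias $-\rho(t)/(h\rho'(t))$; the remaining two of the three allowed hidden units are set to zero.

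For any $x \in [-\delta, 1+\delta]$ the choice of $h$ gives $|hx| \le \mu$, so Taylor's theorem with Lagrange remainder yields $\rho(hx+t) = \rho(t) + hx\,\rho'(t) + \tfrac{1}{2}(hx)^2\rho''(\xi)$ for some $\xi \in [t-\mu, t+\mu]$, whence
\[
\bigl|f_{\textup{id},\delta}(x) - x\bigr| = \left|\frac{h\,x^2\,\rho''(\xi)}{2\rho'(t)}\right| \le \frac{M\,h\,(1+\delta)^2}{2|\rho'(t)|} \le \epsilon
\]
by the choice of $h$, and consequently $|f_{\textup{id},\delta}(x)| \le 1+\delta+\epsilon$ on the domain. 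What remains is to verify the weight constraint. The input weight $h$ is small, the hidden bias $|t|$ is a constant depending only on $\rho$, while the dominant parameters $1/(h|\rho'(t)|)$ and $|\rho(t)|/(h|\rho'(t)|)$ are both bounded by $C_1(1+\delta)^2/\epsilon$ for a suitable $C_1 = C_1(\rho)$.

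The only mildly delicate step is keeping $hx + t$ inside the locally-quadratic window so that the Taylor remainder is valid; this is exactly what the $\mu/(1+\delta)$ cap on $h$ enforces, at the cost of splitting into the small-$\epsilon$ and large-$\epsilon$ regimes. Beyond this bookkeeping, the construction is elementary: one node and one second-order Taylor expansion suffice, and the allowance of three hidden nodes in the stated class is not needed.
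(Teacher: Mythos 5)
Your construction is essentially identical to the paper's: a single hidden unit computing $\rho(hx+t)$ with $h=1/K$, linearly rescaled so that the second-order Taylor expansion at $t$ leaves a remainder of order $h(1+\delta)^2$, and $h \asymp \epsilon/(1+\delta)^2$ to make that remainder at most $\epsilon$. Your version is in fact slightly more careful than the paper's, since you explicitly cap $h$ so that $hx+t$ stays in the interval where $\rho$ is three times differentiable and you allow $\rho'(t)$ of either sign, but the argument is the same.
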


\begin{proof}
Consider a DNN such that
 $$f_{\textup{id}, \delta}(x):=\frac{K}{\rho'(t)}\sbr{\rho\del{\frac{1}{K}x + t}-\rho(t)}$$
for some $K>0$ that will be defined later. Then by  Taylor expansion around $t$, we have
    \begin{align*}
        f_{\textup{id},\delta}(x)
        =\frac{K}{\rho'(t)}\sbr{\frac{\rho'(t)}{K}x + \frac{\rho''(\tilde{x})}{2K^2}x^2}= x+  \frac{\rho''(\tilde{x})}{\rho'(t)K}x^2
    \end{align*}
where $\tilde{x}$ lies between $t$ and $x$. Since the second derivative of $\rho$ is bounded and $\rho'(t)>0,$ we have
    \begin{equation*}
        \sup_{\x\in[-\delta,1+\delta]}|f_{\textup{id},\delta}(\x)-\x|\le C_2(1+\delta)^2/K.
    \end{equation*}
for some constant $C_2>0$ depending only on the activation function $\rho(\cdot)$. Taking $K=C_1(1+\delta)^2/\epsilon $, we have the desired result.
\end{proof}

\begin{theorem}
\label{thm:composite_approx}
Let $\epsilon_0>0$. Let $f^\star:=g_q^\star\circ\dots\circ g_1^\star\in\cG^{\textsc{comp}}(q,\balpha, \mathbf{d}, \t, R)$. Let  $\alpha_j^*:=\alpha_j\prod_{h=j+1}^q\alpha _h\wedge 1$ for $j\in[q]$ and $\alpha_{\min}:=\min_{j\in[q]}\alpha_j>0$. Let $\kappa:=\max_{j\in[q]}t_j/\alpha_j^*$. Then there exist positive constants $L_0$, $N_0$, $S_0$, $B_0$ and $F_0$ depending only on $\balpha$, $\mathbf{d}$, $\mathbf{t}$, $R$, $\epsilon_0$ and $\rho(\cdot)$ such that, for any $\epsilon\in(0,\epsilon_0)$, there is a DNN
    \begin{equation}
        f\in  \cF^\dnn_\rho\del{L_0\log(1/\epsilon),N_0\epsilon^{-\kappa}, B_0, F_0, S_0\epsilon^{-\kappa}\log(1/\epsilon)}
    \end{equation}
for a piecewise linear $\rho$ and
    \begin{equation}
            f\in  \cF^\dnn_\rho\big(L_0\log(1/\epsilon),N_0\epsilon^{-\kappa}, B_0\epsilon^{-4\kappa-4-\alpha_{\min}^{-q+1}},F_0, S_0\epsilon^{-\kappa}\log(1/\epsilon)\big)
    \end{equation}
for a locally quadratic $\rho$    satisfying
    \begin{equation}
        \norm{f^\star-f}_\infty\le \epsilon.
    \end{equation}
\end{theorem}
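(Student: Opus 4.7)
The plan is to construct the approximating DNN by composing sub-networks, one per layer of the $g_j^\star$ composition, with per-layer accuracies calibrated through \cref{eq:composite_ineq}. Set $\beta_j := \prod_{h=j+1}^q(\alpha_h\wedge 1)$ (so $\beta_q=1$), and note that $\alpha_j^* = \alpha_j\beta_j$ and $\kappa = \max_j t_j/\alpha_j^*$. If I arrange $\|g_{jk}^\star-\hat g_{jk}\|_\infty\le c\,\epsilon^{1/\beta_j}$ for each $j\in[q]$ and $k\in[d_{j+1}]$ with a sufficiently small $c = c(q,R,\balpha)$, then \cref{eq:composite_ineq} applied to the composed map $\hat f := \hat g_q\circ\cdots\circ \hat g_1$ with $\hat g_j := (\hat g_{jk})_k$ yields $\|f^\star-\hat f\|_\infty \le \epsilon$. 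Each scalar $\hat g_{jk}$ is supplied by \cref{thm:holder_approx} applied to $g_{jk}^\star\in\cH^{\alpha_j,R}([a_j,b_j]^{t_j})$ at accuracy $c\,\epsilon^{1/\beta_j}$; the resulting sub-network has depth $O(\log(1/\epsilon))$, width $O(\epsilon^{-t_j/\alpha_j^*})$, and sparsity $O(\epsilon^{-t_j/\alpha_j^*}\log(1/\epsilon))$. Running the $d_{j+1}$ sub-networks in parallel for each fixed $j$ and then stacking the $q$ parallel blocks sequentially produces a DNN of depth $O(\log(1/\epsilon))$, width $O(\epsilon^{-\kappa})$, and sparsity $O(\epsilon^{-\kappa}\log(1/\epsilon))$, with the constants absorbing $q$ and $\max_j d_{j+1}$.

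For piecewise linear $\rho$, \cref{thm:holder_approx} already supplies a parameter bound $B_0$ independent of $\epsilon$, so the assembled network realizes the first claim directly. The one step to verify is that the output of $\hat g_j$ fed into $\hat g_{j+1}$ lies in a compact set on which $g_{j+1,k}^\star$ can be extended as a H\"older function of comparable norm. Since the approximation error $c\,\epsilon^{1/\beta_j}$ is uniformly bounded and exact clipping to $[a_{j+1},b_{j+1}]$ is realizable by a thin piecewise linear sub-network, this adds only constants to the depth, width, and sparsity, and does not disturb the parameter bound.

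The main obstacle is the locally quadratic case, where every parameter bound depends nontrivially on $\epsilon$. \cref{thm:holder_approx} gives a per-block parameter magnitude of $\epsilon^{-4(t_j/\alpha_j+1)/\beta_j} = \epsilon^{-4(t_j/\alpha_j^* + 1/\beta_j)}$; uniformly over $j$ I have $t_j/\alpha_j^*\le\kappa$ and $1/\beta_j\le\alpha_{\min}^{-q+1}$. Exact clipping is unavailable, so between consecutive blocks I insert approximate-identity carriers from \cref{lem:identity_approx} with domain enlargement $\delta\asymp\epsilon^{1/\beta_j}$ and accuracy matching $\epsilon^{1/\beta_{j+1}}$, contributing parameters of order $(1+\delta)^2/\epsilon^{1/\beta_{j+1}} = O(\epsilon^{-1/\beta_{j+1}})$, dominated by the H\"older-block bound. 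Taking the worst exponent across $j$, the overall parameter magnitude is bounded by $\epsilon^{-4\kappa-4-\alpha_{\min}^{-q+1}}$ after absorbing the finitely many $O(1)$ factors into $B_0$. The delicate bookkeeping step is to verify that summing the block-level and carrier-level exponents across the $q$ levels compresses into this single clean exponent, which I expect to follow from the monotone growth of $1/\beta_j$ as $j$ decreases and a worst-case-at-$j=1$ argument.
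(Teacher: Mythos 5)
Your route is the paper's own: approximate each $g^\star_{jk}$ by \cref{thm:holder_approx} at accuracy $\epsilon^{1/\beta_j}=\epsilon^{\alpha_j/\alpha_j^*}$, control the composition through \cref{eq:composite_ineq}, and in the locally quadratic case splice the approximate identities of \cref{lem:identity_approx} between consecutive blocks (the paper uses these narrow identity links not only for range control but also so that the interface between two blocks keeps the sparsity of the composite additive rather than multiplicative). One concrete miscalibration: the carrier sitting between blocks $j$ and $j+1$ must be accurate to $\epsilon^{1/\beta_j}$, not $\epsilon^{1/\beta_{j+1}}$. Its error $\delta$ enters \cref{eq:composite_ineq} as $\delta^{\beta_j}$ (whether you charge it to the output of level $j$, or, via the $(\alpha_{j+1}\wedge 1)$-H\"older continuity of $g^\star_{j+1}$, to the input of level $j+1$), so your choice leaves a residual of order $\epsilon^{\beta_j/\beta_{j+1}}=\epsilon^{\alpha_{j+1}\wedge1}$, which is not $O(\epsilon)$ whenever some $\alpha_{j+1}<1$. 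The fix is a one-exponent change and costs nothing in the architecture, since $\max_j 1/\beta_j\le\alpha_{\min}^{-q+1}$ is already absorbed into the exponent of $B_0$.

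On the bookkeeping you flag as delicate: your per-block parameter magnitude $\epsilon^{-4(t_j/\alpha_j^*+1/\beta_j)}$ is the honest consequence of applying \cref{thm:holder_approx} at accuracy $\epsilon^{1/\beta_j}$, and, as you suspect, it does not in general compress to $\epsilon^{-4\kappa-4-\alpha_{\min}^{-q+1}}$ (the term $4/\beta_j$ can exceed $4+\alpha_{\min}^{-q+1}$ when $\alpha_{\min}^{-q+1}>4/3$). The paper reaches the stated exponent by recording the block bound as $B_{0,j}\epsilon^{-4(t_j/\alpha_j^*+1)}$, i.e.\ with $1$ in place of $1/\beta_j$, so the mismatch you would encounter at this step is present in the source rather than introduced by your argument; it affects only the constant in the exponent of $B_0\epsilon^{-\nu_2}$ and none of the rate conclusions drawn from the theorem.
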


\begin{proof}
For a piecwise linear activation function, combining Lemma A1 of \cite{ohn2019smooth}, Theorem 5 and Lemma 3 of \cite{schmidt2020nonparametric}, we obtain the desired result.

For a locally quadratic activation function, without loss of generality,  we assume that $a_j=0$ and $b_j=1$ for all $j\in [q]$. By \cref{thm:holder_approx}, for each $j\in[q]$, $k\in[d_{j+1}]$, and for any $\epsilon>0$, there is a DNN
    \begin{equation*}
        \tilde{g}_{jk}\in\cF_\rho^\dnn\del{L_{0,j}\log(1/\epsilon), N_{0,j}\epsilon^{-t_j/\alpha_j^*}, B_{0,j}\epsilon^{-4(t_j/\alpha_j^*+1)}, F_{0,j}, S_{0,j}\epsilon^{-t_j/\alpha_j^*}\log(1/\epsilon)}
    \end{equation*}
such that 
    \begin{equation*}
        \|g^\star_{jk}-\tilde{g}_{jk}\|_\infty\le \epsilon^{\alpha_j/\alpha_j^*}
    \end{equation*}
for some positive constants $L_{0,j}, N_{0,j}, B_{0,j}, F_{0,j}$ and  $S_{0,j}$. On the other hand, for every $j\in[q]$, \cref{lem:identity_approx} implies that there is a DNN $f_{\text{id},j}\in\cF_\rho^\dnn(1, 3, C_1\epsilon^{-\alpha_j/\alpha_j^*}, 2)$ such that
    \begin{equation*}
        \|f_{\text{id},j}\circ \tilde{g}_{jk}-\tilde{g}_{jk}\|_\infty\le \epsilon^{\alpha_j/\alpha_j^*}
    \end{equation*}
for some constant $C_1>0$ depending only on $\rho(\cdot)$, $\epsilon_0$ and $\alpha_{\min}$, and thus $ \|g^\star_{jk}-f_{\text{id},j}\circ \tilde{g}_{jk}\|_\infty\le 2\epsilon^{\alpha_j/\alpha_j^*}$.  For approximation of $g^\star_{jk}$, we consider the DNN $ g_{jk}:=f_{\text{id},j}\circ \tilde{g}_{jk}$ instead of $\tilde{g}_{jk}$ in order to control the sparsity of the composited DNNs. Note that 
    \begin{align*}
        \|\btheta(g_j\circ g_{j-1})\|_0
        &\le \sum_{k=1}^{d_{j+1}}\|f_{\text{id},j}\|_0\|\btheta(\tilde{g}_{jk})\|_0+ \sum_{k=1}^{d_{j}}\|f_{\text{id},j}\|_0\|\btheta(\tilde{g}_{(j-1)k})\|_0 \\
        &\lesssim (\epsilon^{-t_j/\alpha_j^*}\vee \epsilon^{-t_{j-1}/\alpha_{j-1}^*})\log(1/\epsilon),
    \end{align*}
where $g_j:=(g_{jk})_{k\in [d_{j+1}]}$ for $j\in [q]$. Let $f:=g_q\circ\dots \circ g_1$. Then we have $   \|\btheta(f)\|_0\lesssim \max_{j\in[q]}\epsilon^{-t_j/\alpha_j^*}\log(1/\epsilon)$,
    \begin{align*}
        \|\btheta(f)\|_\infty
        &\le \max_{j\in[q]}\cbr{\max_{k\in[d_{j+1}]}\|\btheta(g_{jk})\|_0\vee \|\btheta(f_{\text{id}, j})\|_0}\\
        &\lesssim \max_{j\in [q]}\epsilon^{-4(t_j/\alpha_j^*+1)}\epsilon^{-\alpha_j/\alpha_j^\star}\\
        &\le \max_{j\in [q]}\epsilon^{-4(t_j/\alpha_j^*+1)-\alpha_{\min}^{-q+1}}
    \end{align*}
and
    \begin{align*}
        \|f^\star-f\|_\infty 
        \le C_2\sum_{j=1}^q\max_{1\le k\le d_{j+1}}\|g^\star_{jk}-g_{jk}\|_\infty^{\alpha_j^*/\alpha_j}
        \le qC_2\epsilon
        \end{align*}
for some constant $C_2>0$ by \cref{eq:composite_ineq}, which completes the proof.
\end{proof}

\subsection{Approximation of piecewise smooth functions}

In this section, we consider the approximation of piecewise smooth functions $\cG^{\textsc{piece}}(\alpha, \beta, M, K, R)$ given in \labelcref{eq:piece} by a DNN with the activation function being either peicewise linear and locally quadratic. We need following lemma for the proof for locally quadratic activation functions.

\begin{lemma}
\label{lem:indicator_approx}
Let the activation function $\rho(\cdot)$ be locally quadratic. There is a DNN $f_{\textup{ind}}\in \cF_\rho (\ceil{C_1\log (1/\epsilon)}, 31, \epsilon^{-8}\vee C_2, 2)$ such that
    \begin{align*}
       \int_{-1}^1|f_{\textup{ind}}(x)-\ind(x\ge 0)|^2\d x\le \epsilon
    \end{align*}
for some constants $C_1>0$ and $C_2>0$ depending only on the activation function $\rho(\cdot)$.
\end{lemma}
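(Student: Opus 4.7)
My plan is to combine an $L^2$ polynomial approximation of the step function with the constant-width, logarithmic-depth computation of polynomials that a locally quadratic activation affords.

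First, I would produce a polynomial $P$ of degree $n \asymp 1/\epsilon$ with $\|P - \ind(\cdot \ge 0)\|_{L^2([-1,1])}^2 \le \epsilon/4$, with bounded $L^\infty$ norm and bounded Fourier--Legendre coefficients. Such a $P$ exists by truncating the Legendre expansion of the step function, whose coefficients decay like $k^{-1}$, giving a tail $L^2$-squared error of order $1/n$.

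Second, I would approximate $P$ by a DNN exploiting the locally quadratic structure of $\rho$. Taylor expansion around a point $t$ with $\rho''(t) \neq 0$ gives the identity
\begin{equation*}
\frac{\rho(t+hx) + \rho(t-hx) - 2\rho(t)}{\rho''(t)\,h^2} \;=\; x^2 \;+\; O(h^2),\qquad (|x|\le 1),
\end{equation*}
so that a one-layer subnetwork realizes an approximate squaring gate with $L^\infty$ error $O(h^2)$ and weights of order $h^{-2}$. Combined with the identity approximation of \cref{lem:identity_approx}, this yields an approximate multiplication $xy = \tfrac14[(x+y)^2 - (x-y)^2]$, and iterating produces each monomial $x^k$ for $k \le n$ in depth $O(\log k)$ with fixed width. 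Running these monomial computations together with a Horner-type accumulation and identity shortcuts that propagate the input and the running partial sum accounts for the constant width (31 channels), and evaluates $P$ in total depth $O(\log n) = O(\log(1/\epsilon))$. Choosing $h \asymp \epsilon^{4}$ yields $\|f_{\textup{ind}} - P\|_{L^\infty}^2 \le \epsilon/8$, hence $\|f_{\textup{ind}} - P\|_{L^2}^2 \le \epsilon/4$, and weight bound $h^{-2} = \epsilon^{-8}$; intermediate values stay in $[-2,2]$ so $F = 2$ suffices.

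The main obstacle will be the error bookkeeping through the $O(\log(1/\epsilon))$-deep composition of approximate multiplication gates. Each gate contributes an additive $O(h^2)$ error and has a bounded forward Lipschitz constant (thanks to the uniform $[-1,1]$ bound on inputs); composing them can amplify the error by a polynomial-in-depth factor, which is what forces $h$ to be polynomially small in $\epsilon$. Precise calibration of $h$, together with uniform control of intermediate values in $[-2, 2]$, is needed to achieve the target $L^2$-squared error $\epsilon$ with weights at most $\epsilon^{-8}$; the specific width $31$ appears to reflect a careful count of parallel channels for the squaring gates, identity shortcuts from \cref{lem:identity_approx}, and the accumulating Horner sum.
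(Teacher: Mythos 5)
Your first step is sound: truncating the Legendre expansion of $\ind(x\ge 0)$ at degree $n\asymp 1/\epsilon$ does give squared $L^2$ error $O(1/n)$, since the orthonormal coefficients decay like $k^{-1}$. The fatal gap is in the second step: a network of width $31$ and depth $O(\log(1/\epsilon))$ cannot evaluate a degree-$n$ polynomial with $n\asymp 1/\epsilon$. Each constant-width layer realizes only $O(1)$ approximate multiplication gates, so the whole network performs $O(\log n)$ nonscalar multiplications, whereas a polynomial with $n+1$ unstructured coefficients needs $\Omega(\sqrt{n})$ of them (Paterson--Stockmeyer); holding all monomials $x,\dots,x^n$ in parallel needs width $\Omega(n)$, and a genuine Horner accumulation is inherently sequential and needs depth $\Omega(n)$. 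The two devices you invoke --- parallel monomial computation and Horner accumulation --- are each incompatible with the stated architecture and with each other, and the truncated Legendre partial sum of the step function has no special structure that would rescue this. There is also a weight obstruction: $P_k$ has monomial coefficients of size roughly $2^k$, so assembling $P$ from monomials would need weights exponential in $1/\epsilon$, far beyond $\epsilon^{-8}$. (A smaller point: with $\rho$ only three times differentiable, the symmetric second difference gives a squaring gate with error $O(h)$, not $O(h^2)$; this only shifts the calibration of $h$, but it matters for hitting the $\epsilon^{-8}$ weight bound.)

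The paper avoids polynomial approximation of the jump altogether. It replaces $\ind(x\ge 0)$ by the piecewise linear ramp $h_K(x)=\tfrac12\{|Kx|-|Kx-1|\}+\tfrac12$, which is at $L^2$ distance $O(K^{-1/2})$ from the step --- this is precisely where the $L^2$ (rather than $L^\infty$) formulation of the lemma is exploited, since no continuous network output can approximate a jump uniformly. It then writes $h_K$ as a difference of two absolute values and invokes an existing approximation of $|\cdot|$ by a locally quadratic network of depth $O(\log K)$, width $16$, weights $O(K^8)$ and sup-error $K^{-2}$ (Lemma A3(e) of \cite{ohn2019smooth}); taking $K\asymp 1/\epsilon$ yields exactly the stated depth, width $31$, weight bound $\epsilon^{-8}$, and error. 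To repair your argument you would have to reduce the polynomial degree to polylogarithmic in $1/\epsilon$, which is impossible for a jump discontinuity at $L^2$ accuracy $\epsilon$, or grow the width/depth beyond what the lemma allows; the ramp-plus-absolute-value decomposition is the step your proposal is missing.
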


\begin{proof}
For simplicity, let $\|f\|_2:= (\int_{-1}^1|f(x)|^2\d x)^{1/2}$ and $\|f\|_\infty :=\sup_{x\in[-1,1]}|f(x)|$ for a real-valued function $f.$
Let  $h(x):=\ind(x\ge 0)$ and $h_K(x):=\frac{1}{2}\cbr{|Kx|-|Kx-1|}+\frac{1}{2}$. Then we have $\|h_K-h\|_2\le 1/(3K)$.

We now approximate $h_K$ by a DNN. By Lemma A3 (e) of \citep{ohn2019smooth}, there is a DNN $f_{\text{abs}}\in\cF(\ceil{C_3\log K}, 16, K^8\vee C_4, 2)$ for some constants $C_3>0$ and $C_4>0$ such that $\|f_{\text{abs}}(x)-|x|\|_\infty\le 1/K^2$. Then the DNN $f_{\textup{ind}}\in \cF(\ceil{C_3\log K}, 31, K^8\vee C_4, 2)$ defined by
    \begin{equation*}
        f_{\textup{ind}}(x):=\frac{1}{2}\cbr{f_{\text{abs}}(Kx)-f_{\text{abs}}(Kx-1)-1}
    \end{equation*}
satisfies $\|f_{\textup{ind}}(x)-h_K(x)\|_\infty \le 1/K$. Hence the desired result follows from the fact
    \begin{align*}
        \|f_{\textup{ind}}-h\|_2
        &\le \|f_{\textup{ind}}-h_K\|_2+ \|h_K-h\|_2\\
        &\le \|f_{\textup{ind}}-h_K\|_\infty+ \|h_K-h\|_2 \le \frac{4}{K}
    \end{align*}
with $K=4/\epsilon.$
\end{proof}

\begin{theorem}
\label{thm:piece_approx}
Let $\epsilon_0>0$. Let $f^\star\in\cG^{\textsc{piece}}(\alpha, \beta, M, K, R)$. Let $\kappa:=d/\alpha \wedge 2(d-1)/\beta$.  Then there exist positive constants $L_0$, $N_0$, $S_0$, $B_0$, $b_0$ and $F_0$ depending only on $\alpha, \beta, M, K, R$, $\epsilon_0$ and $\rho(\cdot)$ such that, for any $\epsilon\in(0, \epsilon_0)$, there is a DNN
    \begin{equation}
        f\in  \cF^\dnn_\rho\del{L_0\log(1/\epsilon),N_0\epsilon^{-\kappa}, B_0\epsilon^{-1}, F_0, S_0\epsilon^{-\kappa}\log(1/\epsilon)}
    \end{equation}
for a piecewise linear $\rho$ and
    \begin{equation}
        f\in  \cF^\dnn_\rho\del{L_0\log(1/\epsilon),N_0\epsilon^{-\kappa}, B_0\epsilon^{-4(\kappa+2)}, F_0, S_0\epsilon^{-\kappa}\log(1/\epsilon)}
    \end{equation}
for a locally quadratic $\rho,$ 
which satisfies
    \begin{equation}
        \norm{f^\star(\x)-f(\x)}_2\le \epsilon.
    \end{equation}
\end{theorem}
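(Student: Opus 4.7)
The strategy is to approximate separately the three structurally distinct ingredients of $f^\star$ --- the smooth factors $g_m\in\cH^{\alpha,R}([0,1]^d)$, the smooth boundary functions $h_{m,k}\in\cH^{\beta,R}([0,1]^{d-1})$, and the indicators $\ind(\cdot\ge 0)$ --- and to wire them together with a DNN product-and-sum circuit. All constants below depend only on $(\alpha,\beta,M,K,R)$, $\epsilon_0$ and $\rho$.

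First, for accuracy parameters $\epsilon_1,\epsilon_2,\epsilon_3>0$ to be balanced later, apply \cref{thm:holder_approx} to obtain DNNs $\tilde g_m$ with $\|g_m-\tilde g_m\|_\infty\le\epsilon_1$ and DNNs $\tilde h_{m,k}$ with $\|h_{m,k}-\tilde h_{m,k}\|_\infty\le\epsilon_2$; their depth is $O(\log(1/\epsilon_i))$, width and sparsity are polynomial with exponents $d/\alpha$ and $(d-1)/\beta$, and weights stay bounded for piecewise linear $\rho$ but grow polynomially for locally quadratic $\rho$. For $\ind(\cdot\ge 0)$, build a hat-function $u\mapsto\tfrac12(|Ku|-|Ku-1|+1)$ via a $|\cdot|$ sub-network when $\rho$ is piecewise linear, or invoke \cref{lem:indicator_approx} when $\rho$ is locally quadratic, producing $f_{\textup{ind}}$ with $\int_{-1}^1|f_{\textup{ind}}(u)-\ind(u\ge 0)|^2\,du\le\epsilon_3$.

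Then assemble
\begin{equation*}
\tilde f(\x):=\sum_{m=1}^M\tilde g_m(\x)\prod_{k=1}^K f_{\textup{ind}}\bigl(x_{j_{m,k}}-\tilde h_{m,k}(\x_{-j_{m,k}})\bigr),
\end{equation*}
where the $K$-fold product is realized by a standard DNN multiplication gadget (Yarotsky-style squaring for piecewise linear $\rho$; built from the local Taylor expansion of $\rho''$ for locally quadratic $\rho$), and identity sub-networks from \cref{lem:identity_approx} are inserted to synchronize the depths of the smooth and indicator branches and to propagate raw coordinates $x_{j_{m,k}}$ through layers. To control the $L_2$-error, split $[0,1]^d$ into the bulk $B:=\{\x:|x_{j_{m,k}}-h_{m,k}(\x_{-j_{m,k}})|>2\epsilon_2\ \text{for every}\ m,k\}$ and its complement. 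On $B$, the shifted arguments $x_{j_{m,k}}-\tilde h_{m,k}(\x_{-j_{m,k}})$ and $x_{j_{m,k}}-h_{m,k}(\x_{-j_{m,k}})$ carry the same sign, so a telescoping bound combined with $\|f_{\textup{ind}}\|_\infty\le 2$ shows that the squared $L_2$-error contributed by replacing each indicator is at most $MK\epsilon_3$; the complement is a union of $O(MK)$ strips of width $O(\epsilon_2)$ around the true boundaries, with total Lebesgue measure $O(MK\epsilon_2)$, on which $|f^\star-\tilde f|$ is bounded and thus contributes $O(MK\epsilon_2)$ to the squared $L_2$-error. Combining with the $\epsilon_1^2$-contribution from $\tilde g_m\approx g_m$ and balancing $\epsilon_1\asymp\epsilon$, $\epsilon_2\asymp\epsilon^2$ and $\epsilon_3\asymp\epsilon^2$ yields $\|f^\star-\tilde f\|_2\le\epsilon$ and total sparsity $O\bigl((\epsilon^{-d/\alpha}+\epsilon^{-2(d-1)/\beta})\log(1/\epsilon)\bigr)=O(\epsilon^{-\kappa}\log(1/\epsilon))$.

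The main obstacle is the locally quadratic case. The indicator sub-network already has weights of order $\epsilon_3^{-8}$, and further compositions with $\tilde h_{m,k}$ and with the identity gadgets of \cref{lem:identity_approx} --- whose weights grow polynomially with the range of their inputs --- produce the stated bound $B_0\epsilon^{-4(\kappa+2)}$. For piecewise linear $\rho$ the analysis is considerably simpler: no weight growth occurs from identity compositions and the indicator contributes only $O(1/\epsilon)$ in weights, matching $B_0\epsilon^{-1}$. Careful tracking of these compositions is the principal calculation, but it is routine given \cref{lem:identity_approx}, \cref{lem:indicator_approx} and the bulk/strip decomposition above.
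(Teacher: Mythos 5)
Your construction is essentially the paper's own proof of the locally quadratic case: approximate each $g_m^\star$ at accuracy $\epsilon$ and each boundary $h_{m,k}^\star$ at accuracy $\epsilon^2$ via \cref{thm:holder_approx}, propagate the raw coordinate $x_{j_{m,k}}$ through the layers with chained identity networks from \cref{lem:identity_approx}, replace the indicator with the network of \cref{lem:indicator_approx}, combine everything with a product gadget, and control the $L_2$ error by exactly the same bulk/strip argument (strips of width $O(\epsilon^2)$ around the true boundaries contribute $O(\epsilon^2)$ squared error). The only structural difference is that for piecewise linear $\rho$ the paper does not redo the construction at all but simply cites Lemma A1 of \cite{ohn2019smooth} together with Theorem 1 of \cite{imaizumi2018deep}, whereas you sketch the hat-function construction directly; that is the same argument in spirit.

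There is, however, one concrete slip in your weight bookkeeping for the locally quadratic case. You invoke \cref{lem:indicator_approx} with squared-error parameter $\epsilon_3\asymp\epsilon^2$, which by the lemma as stated yields $\|\btheta(f_{\textup{ind}})\|_\infty\asymp\epsilon_3^{-8}=\epsilon^{-16}$, and you then assert that the compositions ``produce the stated bound $B_0\epsilon^{-4(\kappa+2)}$''. That assertion fails whenever $4(\kappa+2)<16$, i.e.\ $\kappa<2$, which is perfectly possible (small $d$, large $\alpha$ and $\beta$). The fix, and what the paper actually uses, is the sharper statement contained in the proof of \cref{lem:indicator_approx}: taking $K=4/\epsilon$ gives $\|f_{\textup{ind}}-\ind(\cdot\ge 0)\|_2\le\epsilon$ (not merely squared error $\le\epsilon$) with weight magnitude $O(\epsilon^{-8})\le O(\epsilon^{-4(\kappa+2)})$, so the indicator network should be taken at $L_2$ accuracy $\epsilon$ rather than by plugging $\epsilon^2$ into the lemma's parameter; your own piecewise linear accounting ($K\asymp 1/\epsilon$, weights $O(\epsilon^{-1})$) already uses this scaling, so the locally quadratic case should be treated the same way. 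A smaller omission: the guarantee of \cref{lem:indicator_approx} is on $[-1,1]$ and the product gadget needs inputs in a fixed cube, so the arguments $x_{j_{m,k}}-\tilde h_{m,k}(\x_{-j_{m,k}})$ and the inputs $(\tilde g_m, I_{m,1},\dots,I_{m,K})$ must be rescaled by a fixed constant (the paper uses $A=1+R+2\epsilon_0^2$, with a factor $A^{K+1}$ restored at the output); without this the change-of-variables step in your bulk bound is not literally justified, though it costs only constants.
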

\begin{proof}
For a piecewise linear activation function, Lemma A1 of \cite{ohn2019smooth} and Theorem 1 of \cite{imaizumi2018deep} yield the desired result.

We now focus on locally quadratic activation functions. Let $$f^\star(\x)=\sum_{m=1}^Mg_m^\star(\x)\prod_{k\in[K]}\ind\del{x_{j_{m,k}}\ge h^\star_{m,k}(\x_{-j_{m,k}})}\in\cG^{\textsc{piece}}(\alpha, \beta, M, K, R).$$

By \cref{thm:holder_approx}, there are positive constants $L_{0}$, $N_0$, $S_0$, $B_0$ and $F_0$ such that, for any $g_m^\star>0$ and any $\epsilon>0$ there is a neural network 
    \begin{equation*}
        g_m\in  \cF^\dnn_\rho\del{L_0\log(1/\epsilon),N_0\epsilon^{-d/\alpha}, B_0\epsilon^{-4(d/\alpha+1)}, F_0, S_0\epsilon^{-d/\alpha}\log(1/\epsilon)}
    \end{equation*}
such that $\|g_m-g_m^\star\|_\infty\le \epsilon.$

For the approximation of $\ind\del[0]{x_{j_{m,k}}\ge h^\star_{m,k}(\x_{-j_{m,k}})},$ we combine the results of \cref{thm:holder_approx}, \cref{lem:identity_approx} and \cref{lem:indicator_approx}. Let $h_{m,k}$ be a DNN with depth $L^*_\epsilon=O(\log(1/\epsilon))$ and sparsity $\|\btheta(h_{m,k})\|_0=O(\epsilon^{-2(d-1)/\beta})$ such that $\|h_{m,k}^\star-h_{m,k}\|_\infty\le \epsilon^{2}$ for each $m\in[M]$ and $k\in[K]$. For $L\in\bN$, define $f_{\textup{id}}^{(L)}:=f_{\textup{id}, (L-1)\epsilon_0}\circ f_{\textup{id}, (L-2)\epsilon_0}\circ\cdots\circ f_{\textup{id}, \epsilon_0}\circ f_{\textup{id}, 0}$, where $f_{\textup{id}, \delta}$ is a DNN with $\textsf{depth}(f_{\textup{id}, \delta})=L$ and  $\|\btheta(f_{\textup{id}, \delta})\|_\infty\le  C_1(1+\delta)^2L\epsilon^{-2}$ for some $C_1>0$ satisfying $\sup_{x\in[-\delta, 1+\delta]}|f_{\textup{id}, \delta}(x) -x|\le \epsilon^2/L$. Then
    \begin{align*}
        \sup_{x\in[0, 1]}\abs{f_{\textup{id}}^{(L)}(x)-x}
        &\le \sup_{x\in[0, 1]}\abs{f_{\textup{id}}^{(L)}(x)-f_{\textup{id}}^{(L-1)}(x)}
        +\sup_{x\in[0, 1]}\abs{f_{\textup{id}}^{(L-1)}(x)-x}\\
         &\le \sup_{x\in[-(L-1)\epsilon_0, 1+(L-1)\epsilon_0]}\abs{f_{\textup{id}, (L-1)\epsilon_0}(x)-x}
        +\sup_{x\in[0, 1]}|f_{\textup{id}}^{(L-1)}(x)-x|\\
        &\le L\epsilon^2/L=\epsilon^2.
    \end{align*}
Let  $A:=(1+R+2\epsilon_0^2)$. Define $u_{m,k}^\star$ and $u_{m,k}$ by $u_{m,k}^\star(\x):=A^{-1}(x_{j_{m,k}}- h^\star_{m,k}(\x_{-j_{m,k}}))$ and $u_{m,k}(\x):=A^{-1}(f_{\textup{id}}^{(L^*_\epsilon)}(x_{j_{m,k}})-h_{m,k}(\x_{-j_{m,k}}))$, respectively, so that $\|u_{m,k}^\star\|_\infty\le 1$, $\|u_{m,k}\|_\infty\le 1$ and $\ind\del[0]{x_{j_{m,k}}\ge h^\star_{m,k}(\x_{-j_{m,k}})}=\ind(u_{m,k}^\star(\x)\ge0).$ 
Note that $u_{m,k}$ is the DNN with depth $L_\epsilon^*$ 
and $f_{\textup{ind}}$ approximates the indicator function $\ind(x\ge0)$ by error $\epsilon$ with respect to the $L_2$-norm 
with $\|\btheta(f_{\textup{ind}})\|_\infty=O(\epsilon^{-8})$ by \cref{lem:indicator_approx}, Using these results, we construct the DNN approximating $\ind(u_{m,k}^\star\ge0)$ by $f_{\textup{ind}}\circ u_{m,k}$ as follows. We start with
    \begin{align*}
      & \norm{f_{\textup{ind}}\circ u_{m,k}- \ind(u_{m,k}^\star\ge0)}_2\\
       &\le \norm{f_{\textup{ind}}\circ u_{m,k}- \ind(u_{m,k}\ge0)}_2
       + \norm{\ind(u_{m,k}\ge0)- \ind(u_{m,k}^\star\ge0)}_2.
       \end{align*}
The first term of the right-hand side of the preceding display is bounded by $\epsilon$. For the second term, we have
    \begin{align*}
        &\abs{\ind(u_{m,k}(\x)\ge0)- \ind(u_{m,k}^\star(\x)\ge0)}\\
        &= \ind(x_{j_{m,k}} \ge h_{m,k^\star}(\x_{-j_{m,k}}), f_{\textup{id}}^{(L^*_\epsilon)}(x_{j_{m,k}})<h_{m,k}(\x_{-j_{m,k}}) )\\
        &\quad + \ind(x_{j_{m,k}}<h_{m,k^\star}(\x_{-j_{m,k}}), f_{\textup{id}}^{(L^*_\epsilon)}(x_{j_{m,k}})\ge h_{m,k}(\x_{-j_{m,k}}) )\\
        &\le \ind(x_{j_{m,k}} \ge h_{m,k^\star}(\x_{-j_{m,k}}), x_{j_{m,k}}-\epsilon^2 <h_{m,k}(\x_{-j_{m,k}}) )\\
        &\quad + \ind(x_{j_{m,k}}<h_{m,k^\star}(\x_{-j_{m,k}}), x_{j_{m,k}}+\epsilon^2\ge h_{m,k}(\x_{-j_{m,k}}) ),
    \end{align*}
which implies
    \begin{align*}
        &\int_{0}^1\abs{\ind(u_{m,k}(\x)\ge0)- \ind(u_{m,k}^\star(\x)\ge0)}^2\d x_{j_{m,k}}\\
        &\le \max\cbr{0, \epsilon^2 +h_{m,k}(\x_{-j_{m,k}})-h_{m,k^\star}(\x_{-j_{m,k}}) }\\
        & \quad + \max\cbr{0, h_{m,k^\star}(\x_{-j_{m,k}})-h_{m,k}(\x_{-j_{m,k}})+\epsilon^2 }\\
        &\le 4\epsilon^2.
    \end{align*}
Therefore $\norm{f_{\textup{ind}}\circ u_{m,k}- \ind(u_{m,k}^\star\ge0)}_2\le 3\epsilon$. 

The remaining part of the proof is to approximate the product map. For the map $\z\equiv(z_1,\dots, z_K)\mapsto \prod_{k=1}^Kz_k,$ where $z_k\in [0,1]$, by Lemma A3 (c) of \cite{ohn2019smooth}, there is a DNN $f_{\textup{prod}}\in \cF_\rho^\dnn(C_2\log(1/\epsilon), C_3, C_4\epsilon^{-2}, 1+\epsilon_0)$  for some positive constants $C_2, C_3$ and $C_4$ depending only on $K$ and $\rho(\cdot)$ such that $\sup_{\z\in[0,1]^K}|f_{\textup{prod}}(\z)-\prod_{k=1}^Kz_k|\le \epsilon$. Define $I_{m,k}^\star$ and $I_{m,k}$ by $I^\star_{m,k}(\x):=\ind\del[0]{x_{j_{m,k}}\ge h^\star_{m,k}(\x_{-j_{m,k}})}$ and $I_{m,k}(\x):=f_{\textup{ind}}\circ u_{m,k}(\x)$ and let
    $$\bpsi_{m,k}:=A^{-1}(g_{m}, I_{m,1},  I_{m,2}, \dots,   I_{m,K}).$$
Define the DNN $f_{m}:=A^{K+1}f_{\textup{prod}}\circ \bpsi_{m,k}$. Then we have

    \begin{align*}
         \norm{f_{m}-g_m^\star\prod_{k\in[K]}I^\star_{m,k}}_2
        &=A^{K+1}\norm{f_{\textup{prod}}\circ \bpsi_{m,k}-A^{-(K+1)}g_m^\star\prod_{k\in[K]}I^\star_{m,k}}_2\\
        &\le A^{K+1}\norm{f_{\textup{prod}}\circ \bpsi_{m,k}-A^{-(K+1)}g_{m}\prod_{k\in[K]}I_{m,k}}_2\\
        &\quad +A^{K+1}\norm{(g_{m}/A)\prod_{k\in[K]}(I_{m,k}/A)-(g_m^\star/A)\prod_{k\in[K]}(I^\star_{m,k}/A)}_2\\
        &\le A^{K+1}(\epsilon + (K+1)A^{-1}\epsilon).
    \end{align*}
Since $K$ and $A$ are fixed constants, we get the desired result.
\end{proof}
\end{appendices}

\bibliographystyle{chicago}
\bibliography{reference-deep}

\begin{thebibliography}{}

\bibitem[\protect\citeauthoryear{Bauer and Kohler}{Bauer and
  Kohler}{2019}]{bauer2019deep}
Bauer, B. and M.~Kohler (2019).
\newblock On deep learning as a remedy for the curse of dimensionality in
  nonparametric regression.
\newblock {\em The Annals of Statistics\/}~{\em 47\/}(4), 2261--2285.

\bibitem[\protect\citeauthoryear{Bergstra, Desjardins, Lamblin, and
  Bengio}{Bergstra et~al.}{2009}]{bergstra2009quadratic}
Bergstra, J., G.~Desjardins, P.~Lamblin, and Y.~Bengio (2009).
\newblock Quadratic polynomials learn better image features.
\newblock {\em Technical report, Technical Report 1337, D{\'e}partement
  d'Informatique et de Recherche Operationnelle\/}.

\bibitem[\protect\citeauthoryear{Briol, Barp, Duncan, and Girolami}{Briol
  et~al.}{2019}]{briol2019statistical}
Briol, F.-X., A.~Barp, A.~B. Duncan, and M.~Girolami (2019).
\newblock Statistical inference for generative models with maximum mean
  discrepancy.
\newblock {\em arXiv preprint arXiv:1906.05944\/}.

\bibitem[\protect\citeauthoryear{Buja, Hastie, and Tibshirani}{Buja
  et~al.}{1989}]{buja1989linear}
Buja, A., T.~Hastie, and R.~Tibshirani (1989).
\newblock Linear smoothers and additive models.
\newblock {\em The Annals of Statistics\/}, 453--510.

\bibitem[\protect\citeauthoryear{Carlile, Delamarter, Kinney, Marti, and
  Whitney}{Carlile et~al.}{2017}]{carlile2017improving}
Carlile, B., G.~Delamarter, P.~Kinney, A.~Marti, and B.~Whitney (2017).
\newblock Improving deep learning by inverse square root linear units
  {(ISRLUs)}.
\newblock {\em arXiv preprint arXiv:1710.09967\/}.

\bibitem[\protect\citeauthoryear{Clevert, Unterthiner, and Hochreiter}{Clevert
  et~al.}{2015}]{clevert2015fast}
Clevert, D.-A., T.~Unterthiner, and S.~Hochreiter (2015).
\newblock Fast and accurate deep network learning by exponential linear units
  {(ELUs)}.
\newblock {\em arXiv preprint arXiv:1511.07289\/}.

\bibitem[\protect\citeauthoryear{Duchi, Hazan, and Singer}{Duchi
  et~al.}{2011}]{duchi2011adaptive}
Duchi, J., E.~Hazan, and Y.~Singer (2011).
\newblock Adaptive subgradient methods for online learning and stochastic
  optimization.
\newblock {\em Journal of Machine Learning Research\/}~{\em 12\/}(Jul),
  2121--2159.

\bibitem[\protect\citeauthoryear{Fan and Li}{Fan and
  Li}{2001}]{fan2001variable}
Fan, J. and R.~Li (2001).
\newblock Variable selection via nonconcave penalized likelihood and its oracle
  properties.
\newblock {\em Journal of the American Statistical Association\/}~{\em
  96\/}(456), 1348--1360.

\bibitem[\protect\citeauthoryear{Frankle and Carbin}{Frankle and
  Carbin}{2018}]{frankle2018lottery}
Frankle, J. and M.~Carbin (2018).
\newblock The lottery ticket hypothesis: Finding sparse, trainable neural
  networks.
\newblock {\em arXiv preprint arXiv:1803.03635\/}.

\bibitem[\protect\citeauthoryear{Ga{i}ffas and Lecu{\'e}}{Ga{i}ffas and
  Lecu{\'e}}{2007}]{gaiffas2007optimal}
Ga{i}ffas, S. and G.~Lecu{\'e} (2007).
\newblock Optimal rates and adaptation in the single-index model using
  aggregation.
\newblock {\em Electronic Journal of Statistics\/}~{\em 1}, 538--573.

\bibitem[\protect\citeauthoryear{Glorot, Bordes, and Bengio}{Glorot
  et~al.}{2011}]{glorot2011deep}
Glorot, X., A.~Bordes, and Y.~Bengio (2011).
\newblock Deep sparse rectifier neural networks.
\newblock In {\em International Conference on Artificial Intelligence and
  Statistics}, pp.\  315--323.

\bibitem[\protect\citeauthoryear{Goodfellow, Pouget-Abadie, Mirza, Xu,
  Warde-Farley, Ozair, Courville, and Bengio}{Goodfellow
  et~al.}{2014}]{goodfellow2014generative}
Goodfellow, I., J.~Pouget-Abadie, M.~Mirza, B.~Xu, D.~Warde-Farley, S.~Ozair,
  A.~Courville, and Y.~Bengio (2014).
\newblock Generative adversarial nets.
\newblock In {\em Advances in Neural Information Processing Systems}, pp.\
  2672--2680.

\bibitem[\protect\citeauthoryear{Gy{\"o}rfi, Kohler, Krzyzak, and
  Walk}{Gy{\"o}rfi et~al.}{2006}]{gyorfi2006distribution}
Gy{\"o}rfi, L., M.~Kohler, A.~Krzyzak, and H.~Walk (2006).
\newblock {\em A distribution-free theory of nonparametric regression}.
\newblock Springer Science \& Business Media.

\bibitem[\protect\citeauthoryear{Han, Pool, Tran, and Dally}{Han
  et~al.}{2015}]{han2015learning}
Han, S., J.~Pool, J.~Tran, and W.~Dally (2015).
\newblock Learning both weights and connections for efficient neural network.
\newblock In {\em Advances in Neural Information Processing Systems}, pp.\
  1135--1143.

\bibitem[\protect\citeauthoryear{Horowitz, Mammen, et~al.}{Horowitz
  et~al.}{2007}]{horowitz2007rate}
Horowitz, J.~L., E.~Mammen, et~al. (2007).
\newblock Rate-optimal estimation for a general class of nonparametric
  regression models with unknown link functions.
\newblock {\em The Annals of Statistics\/}~{\em 35\/}(6), 2589--2619.

\bibitem[\protect\citeauthoryear{Imaizumi and Fukumizu}{Imaizumi and
  Fukumizu}{2019}]{imaizumi2018deep}
Imaizumi, M. and K.~Fukumizu (2019).
\newblock Deep neural networks learn non-smooth functions effectively.
\newblock In {\em International Conference on Artificial Intelligence and
  Statistics}.

\bibitem[\protect\citeauthoryear{Imaizumi and Fukumizu}{Imaizumi and
  Fukumizu}{2020}]{imaizumi2020advantage}
Imaizumi, M. and K.~Fukumizu (2020).
\newblock Advantage of deep neural networks for estimating functions with
  singularity on curves.
\newblock {\em arXiv preprint arXiv:2011.02256\/}.

\bibitem[\protect\citeauthoryear{Kim, Ohn, and Kim}{Kim
  et~al.}{2021}]{kim2018fast}
Kim, Y., I.~Ohn, and D.~Kim (2021).
\newblock Fast convergence rates of deep neural networks for classification.
\newblock {\em Neural Networks\/}~{\em 138}, 179--197.

\bibitem[\protect\citeauthoryear{Kingma and Ba}{Kingma and
  Ba}{2014}]{kingma2014adam}
Kingma, D.~P. and J.~Ba (2014).
\newblock Adam: A method for stochastic optimization.
\newblock {\em arXiv preprint arXiv:1412.6980\/}.

\bibitem[\protect\citeauthoryear{Kingma and Welling}{Kingma and
  Welling}{2013}]{kingma2013auto}
Kingma, D.~P. and M.~Welling (2013).
\newblock Auto-encoding variational {B}ayes.
\newblock {\em arXiv preprint arXiv:1312.6114\/}.

\bibitem[\protect\citeauthoryear{Klimek and Perelstein}{Klimek and
  Perelstein}{2018}]{klimek2018neural}
Klimek, M.~D. and M.~Perelstein (2018).
\newblock Neural network-based approach to phase space integration.
\newblock {\em arXiv preprint arXiv:1810.11509\/}.

\bibitem[\protect\citeauthoryear{Kohler and Langer}{Kohler and
  Langer}{2021}]{kohler2019rate}
Kohler, M. and S.~Langer (2021).
\newblock On the rate of convergence of fully connected deep neural network
  regression estimates.
\newblock {\em The Annals of Statistics\/}, to appear.

\bibitem[\protect\citeauthoryear{Lange}{Lange}{2013}]{lange2013mm}
Lange, K. (2013).
\newblock The {MM} algorithm.
\newblock In {\em Optimization}, pp.\  185--219. Springer.

\bibitem[\protect\citeauthoryear{Li, Kadav, Durdanovic, Samet, and Graf}{Li
  et~al.}{2016}]{li2016pruning}
Li, H., A.~Kadav, I.~Durdanovic, H.~Samet, and H.~P. Graf (2016).
\newblock Pruning filters for efficient convnets.
\newblock {\em arXiv preprint arXiv:1608.08710\/}.

\bibitem[\protect\citeauthoryear{Liang}{Liang}{2018}]{liang2018well}
Liang, T. (2018).
\newblock On how well generative adversarial networks learn densities:
  Nonparametric and parametric results.
\newblock {\em arXiv preprint arXiv:1811.03179\/}.

\bibitem[\protect\citeauthoryear{Liu, Wang, Foroosh, Tappen, and Pensky}{Liu
  et~al.}{2015}]{liu2015sparse}
Liu, B., M.~Wang, H.~Foroosh, M.~Tappen, and M.~Pensky (2015).
\newblock Sparse convolutional neural networks.
\newblock In {\em Proceedings of the IEEE Conference on Computer Vision and
  Pattern Recognition}, pp.\  806--814.

\bibitem[\protect\citeauthoryear{Liu, Jiang, He, Chen, Liu, Gao, and Han}{Liu
  et~al.}{2019}]{liu2019variance}
Liu, L., H.~Jiang, P.~He, W.~Chen, X.~Liu, J.~Gao, and J.~Han (2019).
\newblock On the variance of the adaptive learning rate and beyond.
\newblock {\em arXiv preprint arXiv:1908.03265\/}.

\bibitem[\protect\citeauthoryear{Louizos, Welling, and Kingma}{Louizos
  et~al.}{2018}]{louizos2018learning}
Louizos, C., M.~Welling, and D.~P. Kingma (2018).
\newblock Learning sparse neural networks through $l_0$ regularization.
\newblock In {\em International Conference on Learning Representations}.

\bibitem[\protect\citeauthoryear{Luo, Xiong, Liu, and Sun}{Luo
  et~al.}{2019}]{luo2019adaptive}
Luo, L., Y.~Xiong, Y.~Liu, and X.~Sun (2019).
\newblock Adaptive gradient methods with dynamic bound of learning rate.
\newblock {\em arXiv preprint arXiv:1902.09843\/}.

\bibitem[\protect\citeauthoryear{Ohn and Kim}{Ohn and
  Kim}{2019}]{ohn2019smooth}
Ohn, I. and Y.~Kim (2019).
\newblock Smooth function approximation by deep neural networks with general
  activation functions.
\newblock {\em Entropy\/}~{\em 21\/}(7), 627.

\bibitem[\protect\citeauthoryear{Parikh, Boyd, et~al.}{Parikh
  et~al.}{2014}]{parikh2014proximal}
Parikh, N., S.~Boyd, et~al. (2014).
\newblock Proximal algorithms.
\newblock {\em Foundations and Trends{\textregistered} in Optimization\/}~{\em
  1\/}(3), 127--239.

\bibitem[\protect\citeauthoryear{Park}{Park}{2009}]{park2009convergence}
Park, C. (2009).
\newblock Convergence rates of generalization errors for margin-based
  classification.
\newblock {\em Journal of Statistical Planning and Inference\/}~{\em 139\/}(8),
  2543--2551.

\bibitem[\protect\citeauthoryear{Petersen and Voigtlaender}{Petersen and
  Voigtlaender}{2018}]{petersen2018optimal}
Petersen, P. and F.~Voigtlaender (2018).
\newblock Optimal approximation of piecewise smooth functions using deep {ReLU}
  neural networks.
\newblock {\em Neural Networks\/}~{\em 108}, 296--330.

\bibitem[\protect\citeauthoryear{Ramachandran, Zoph, and Le}{Ramachandran
  et~al.}{2017}]{ramachandran2017searching}
Ramachandran, P., B.~Zoph, and Q.~V. Le (2017).
\newblock Searching for activation functions.
\newblock {\em arXiv preprint arXiv:1710.05941\/}.

\bibitem[\protect\citeauthoryear{Schmidt-Hieber}{Schmidt-Hieber}{2020}]{schmidt2020nonparametric}
Schmidt-Hieber, J. (2020).
\newblock Nonparametric regression using deep neural networks with {ReLU}
  activation function.
\newblock {\em The Annals of Statistics\/}~{\em 48\/}(4), 1875--1897.

\bibitem[\protect\citeauthoryear{Stone}{Stone}{1985}]{stone1985additive}
Stone, C.~J. (1985).
\newblock Additive regression and other nonparametric models.
\newblock {\em The Annals of Statistics\/}~{\em 13\/}(2), 689--705.

\bibitem[\protect\citeauthoryear{Suzuki}{Suzuki}{2019}]{suzuki2018adaptivity}
Suzuki, T. (2019).
\newblock Adaptivity of deep {ReLU} network for learning in {Besov} and mixed
  smooth {Besov} spaces: optimal rate and curse of dimensionality.
\newblock In {\em International Conference on Learning Representations}.

\bibitem[\protect\citeauthoryear{Tao and An}{Tao and An}{1997}]{tao1997convex}
Tao, P.~D. and L.~T.~H. An (1997).
\newblock Convex analysis approach to {DC} programming: theory, algorithms and
  applications.
\newblock {\em Acta Mathematica Vietnamica\/}~{\em 22\/}(1), 289--355.

\bibitem[\protect\citeauthoryear{Tsuji and Suzuki}{Tsuji and
  Suzuki}{2021}]{tsuji2021estimation}
Tsuji, K. and T.~Suzuki (2021).
\newblock Estimation error analysis of deep learning on the regression problem
  on the variable exponent besov space.
\newblock {\em Electronic Journal of Statistics\/}~{\em 15\/}(1), 1869--1908.

\bibitem[\protect\citeauthoryear{Uppal, Singh, and Poczos}{Uppal
  et~al.}{2019}]{uppal2019nonparametric}
Uppal, A., S.~Singh, and B.~Poczos (2019).
\newblock Nonparametric density estimation \& convergence rates for {GANs}
  under {B}esov {IPM} losses.
\newblock In {\em Advances in Neural Information Processing Systems}, pp.\
  9086--9097.

\bibitem[\protect\citeauthoryear{Wainwright}{Wainwright}{2019}]{wainwright2019high}
Wainwright, M.~J. (2019).
\newblock {\em High-dimensional statistics: A non-asymptotic viewpoint},
  Volume~48.
\newblock Cambridge University Press.

\bibitem[\protect\citeauthoryear{Wen, Wu, Wang, Chen, and Li}{Wen
  et~al.}{2016}]{wen2016learning}
Wen, W., C.~Wu, Y.~Wang, Y.~Chen, and H.~Li (2016).
\newblock Learning structured sparsity in deep neural networks.
\newblock In {\em Advances in Neural Information Processing Systems}, pp.\
  2074--2082.

\bibitem[\protect\citeauthoryear{Yarotsky}{Yarotsky}{2017}]{yarotsky2017error}
Yarotsky, D. (2017).
\newblock Error bounds for approximations with deep relu networks.
\newblock {\em Neural Networks\/}~{\em 94}, 103--114.

\bibitem[\protect\citeauthoryear{Yuille and Rangarajan}{Yuille and
  Rangarajan}{2003}]{yuille2003concave}
Yuille, A.~L. and A.~Rangarajan (2003).
\newblock The concave-convex procedure.
\newblock {\em Neural Computation\/}~{\em 15\/}(4), 915--936.

\bibitem[\protect\citeauthoryear{Zhang}{Zhang}{2010a}]{zhang2010nearly}
Zhang, C.-H. (2010a).
\newblock Nearly unbiased variable selection under minimax concave penalty.
\newblock {\em The Annals of Statistics\/}~{\em 38\/}(2), 894--942.

\bibitem[\protect\citeauthoryear{Zhang}{Zhang}{2010b}]{zhang2010analysis}
Zhang, T. (2010b).
\newblock Analysis of multi-stage convex relaxation for sparse regularization.
\newblock {\em Journal of Machine Learning Research\/}~{\em 11\/}(Mar),
  1081--1107.

\end{thebibliography}

\end{document}